\numberwithin{equation}{section}
\newtheorem{theorem}{Theorem}[section]
\newtheorem{proposition}[theorem]{Proposition}
\newtheorem{lemma}[theorem]{Lemma}
\newtheorem{definition}[theorem]{Definition}
\newtheorem{claim}[theorem]{Claim}
\newtheorem{corollary}[theorem]{Corollary}
\newtheorem{conjecture}[theorem]{Conjecture}
\newtheorem{problem}[theorem]{Problem}
\newtheorem{algo}[theorem]{Algorithm}
\newtheorem{nota}[theorem]{Notation}
\newtheorem{construction}[theorem]{Construction}
\newtheorem{remark}[theorem]{Remark}
\newtheorem{obs}[theorem]{Observation}
\def\mi{\hbox{\rm mi}}
\begin{document}




\title{On the number of $k$-dominating independent sets}


\author{Zolt\'an L\'or\'ant Nagy \thanks{MTA--ELTE Geometric and Algebraic Combinatorics Research Group, H--1117 Budapest, P\'azm\'any P.\ s\'et\'any 1/C, Hungary. Email: nagyzoli@cs.elte.hu. }}

\date{}

\maketitle

\begin{abstract} We study the existence and the number of $k$-dominating independent sets in certain graph families. While the case $k=1$  namely the case of maximal independent sets  - which is originated from Erd\H{o}s and Moser - is widely investigated, much less is known in general. In this paper we settle the question for trees and prove that the maximum number of $k$-dominating independent sets in $n$-vertex graphs is between $c_k\cdot\sqrt[2k]{2}^{\,n}$ and $c_k'\cdot\sqrt[k+1]{2}^{\,n}$ if $k\geq 2$, moreover the maximum number of $2$-dominating independent sets in $n$-vertex graphs is between $c\cdot 1.22^{\,n}$ and $c'\cdot1.246^{\,n}$.
Graph constructions containing a  large number of $k$-dominating independent sets are coming from product graphs, complete bipartite graphs and finite geometries. The product graph construction is associated with the number of certain MDS codes.
\end{abstract}

{\bf Keywords:} $k$-DIS, domination, maximal independent sets, $k$-dominating, MDS codes, finite geometry, hyperoval, $(k,n)$-arcs




\bigskip
\section{Introduction and background}
\label{1}

Let $G= G(V, E)$ be a simple graph. For any vertex $v\in V(G)$, $d(v)$ denotes the degree of $v$, $N(v)$ denotes the set of neighbors of $v$, and $N[v]$ denotes the closed neighborhood, i.e. $N[v]:=N(v)\cup \{v\}$.

A subset $I\subseteq V(G)$ is called \textit{independent} if it does not induce any edge. A \textit{maximal independent set} is an independent set which is not a proper subset of another independent set (it cannot be extended). A maximum independent set is an independent set of maximal size; its size is denoted by $\alpha(G)$.

A subset $D\subseteq V(G)$ is a \textit{dominating} set in $G$ if each vertex in $V(G)\setminus D$ is adjacent to at least one vertex of $D$, that is, $\forall v\in  V(G)\setminus D$, $|N(v)\cap D|\geq 1$.
 We call a set \textit{$k$-dominating} if each vertex in $V(G)\setminus D$ is adjacent to at least $k$ vertices of $D$, that is, $\forall v\in  V(G)\setminus D$, $|N(v)\cap D|\geq k$ .
 The theory of independent sets and dominating sets has been studied extensively over the last 60 years.
 
Following the concept of W\l{}och \cite{wloch}, we study \textit{$k$-dominating independent sets}, or $k$-DISes for brevity, in case $k>1$. Note that the case $k=1$ when a set $W$ is dominating and independent at the same time is also extensively studied. These sets are called \textit{kernels} of the graphs (due to Neumann and Morgenstern) and they clearly  coincide with the maximal independent sets.
The possible number of kernels has been resolved in many graph families  including connected graphs, bipartite graphs and trees, triangle-free graphs, see the results of Moon, Moser, F\"uredi, Hujter and Tuza, Jou and Chang \cite{chang1, chang2, furedi, HT,  moon}.
\medskip

Our principal function  is formulated in  the following

\begin{nota}
Let $\mi_k(n)$ denote the maximum number of $k$-DISes in graphs of order $n$, and let
$\mi_k(n, \mathcal{F})$ denote the maximum number of $k$-DISes in the $n$-vertex members of the graph family  $\mathcal{F}$. If $\mathcal{F}$ consists of a single graph $G$, we denote by $\mi_k(G)$ the  number of $k$-DISes in $G$.
\end{nota}

Concerning graph constructions, we will use 
\begin{nota}
For arbitrary graphs $G$ and $H$, $G+H$ denotes the disjoint union of $G$ and $H$. Similarly, if a parameter $k\in \mathbb{Z}^+$ is given, $kG$ denotes the disjoint union of $k$ copies of $G$.  $K_m\square K_m$ denotes the Cartesian product of two $K_m$ graphs, or in other words  it is the strongly regular Lattice graph $L(m)$, or Rook graph. Finally, $(K_m)^t$ denotes the Cartesian product of $t$ $ K_m$ graphs: $K_m\square K_m\square \ldots \square K_m$.
\end{nota}

\begin{obs}\label{obsit} $\mi_k(G+H)=\mi_k(G)\cdot\mi_k(H)$ for any two graphs $G$ and $H$. 
\end{obs}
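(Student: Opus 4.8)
The plan is to establish a bijection between the $k$-DISes of $G+H$ and the pairs $(W_G,W_H)$ where $W_G$ is a $k$-DIS of $G$ and $W_H$ is a $k$-DIS of $H$. For $W\subseteq V(G+H)$ write $W_G:=W\cap V(G)$ and $W_H:=W\cap V(H)$. The observation follows once I show that $W$ is a $k$-DIS of $G+H$ if and only if $W_G$ is a $k$-DIS of $G$ and $W_H$ is a $k$-DIS of $H$: the maps $W\mapsto(W_G,W_H)$ and $(W_G,W_H)\mapsto W_G\cup W_H$ are then mutually inverse bijections, and hence the number of $k$-DISes of $G+H$ equals the product of the numbers of $k$-DISes of $G$ and of $H$.

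First I would record the structural fact that drives the whole argument: since $G+H$ is the \emph{disjoint} union, there is no edge between $V(G)$ and $V(H)$, so $N_{G+H}(v)=N_G(v)\subseteq V(G)$ for every $v\in V(G)$, and symmetrically $N_{G+H}(v)=N_H(v)\subseteq V(H)$ for every $v\in V(H)$. Consequently, for any $W$ and any $v\in V(G)$ one has $N_{G+H}(v)\cap W=N_G(v)\cap W_G$, and likewise on the $H$ side.

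The two implications are then routine. If $W$ is independent and $k$-dominating in $G+H$, then independence of $W$ at once gives independence of its subsets $W_G$ and $W_H$; and for $v\in V(G)\setminus W_G=V(G)\setminus W$ the $k$-domination hypothesis together with the displayed identity yields $|N_G(v)\cap W_G|=|N_{G+H}(v)\cap W|\ge k$, so $W_G$ is a $k$-DIS of $G$, and symmetrically $W_H$ is a $k$-DIS of $H$. Conversely, if $W_G$ and $W_H$ are $k$-DISes of $G$ and $H$, then $W:=W_G\cup W_H$ induces no edge (none inside $W_G$, none inside $W_H$, none across), and each $v\in V(G+H)\setminus W$ lies in exactly one of the two parts, where it already has at least $k$ neighbours in the corresponding $k$-DIS, hence at least $k$ neighbours in $W$; thus $W$ is a $k$-DIS of $G+H$.

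There is essentially no obstacle here; the only point worth a remark is that the argument is insensitive to degenerate cases — an isolated vertex of $G$ is forced into every $k$-DIS of $G$ and, equally, into every $k$-DIS of $G+H$, and if $G$ (or $H$) admits no $k$-DIS at all then every $k$-DIS of $G+H$ would restrict to one, so both sides of the identity are $0$. Hence $\mi_k(G+H)=\mi_k(G)\cdot\mi_k(H)$ holds unconditionally.
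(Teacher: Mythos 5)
Your proof is correct; the paper states this as an Observation without any proof, and your bijection $W\mapsto(W\cap V(G),\,W\cap V(H))$ is exactly the routine argument it implicitly relies on. The handling of the degenerate case where one factor has no $k$-DIS is a nice touch, but nothing here diverges from what the paper takes for granted.
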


\begin{nota}
Let $\zeta_k(G):=\sqrt[n]{\mi_k(G)}$ for a fixed graph $G$ on $n$ vertices and let $$\zeta_k(n):=\sqrt[n]{\mi_k(n)}, \ \  \zeta_k(n,  \mathcal{F}):=\sqrt[n]{\mi_k(n, \mathcal{F})}.$$
\end{nota}

\begin{theorem}\label{alap0}
\begin{itemize}
\item[(i)]  $\zeta_k(n) \in [1,2]  \ \ \forall k,n \in \mathbb{Z}^+, k\leq n.$

\item[(ii)]  $\zeta_k(G) \leq \lim \inf \zeta_k(n) \ \  \forall k  \in \mathbb{Z}^+$ and for every fixed graph $G$.

\item[(iii)] $\forall k  \ \ \exists \lim \zeta_k(n)$.

\end{itemize}
\end{theorem}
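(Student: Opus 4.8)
The plan is to treat this statement as a package of elementary bounds together with a Fekete-type limit argument, in the order (i), (iii), (ii). For (i), the upper bound is immediate: $\mi_k(n)$ counts a subfamily of the $2^n$ subsets of the vertex set, so $\mi_k(n)\le 2^n$ and hence $\zeta_k(n)\le 2$. For the lower bound it suffices to exhibit a single $n$-vertex graph carrying a $k$-DIS, and the edgeless graph $nK_1$ works: its whole vertex set is independent and has empty complement, so it is (vacuously) $k$-dominating; thus $\mi_k(n)\ge 1$ and $\zeta_k(n)\ge 1$.

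For (iii), the heart of the matter is that $n\mapsto\mi_k(n)$ is supermultiplicative. Taking extremal graphs $G_1,G_2$ on $n_1,n_2$ vertices and forming their disjoint union, Observation \ref{obsit} gives
\[
\mi_k(n_1+n_2)\ \ge\ \mi_k(G_1+G_2)\ =\ \mi_k(G_1)\,\mi_k(G_2)\ =\ \mi_k(n_1)\,\mi_k(n_2).
\]
Equivalently, $b_n:=\log\mi_k(n)$ is superadditive, and by part (i) it is real and satisfies $0\le b_n\le n\log 2$ for every $n\ge 1$. Fekete's lemma for superadditive sequences then yields that $\lim_{n\to\infty} b_n/n$ exists and equals $\sup_{n\ge 1} b_n/n$, which is finite; exponentiating, $\lim_{n\to\infty}\zeta_k(n)$ exists, equals $\sup_{n\ge1}\zeta_k(n)$, and lies in $[1,2]$ by part (i).

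For (ii), let $G$ have $m$ vertices. If $\mi_k(G)=0$ the inequality is trivial by part (i) (and this case really occurs, e.g. $\mi_2(K_m)=0$ for $m\ge 2$), so assume $\mi_k(G)\ge 1$, i.e. $\zeta_k(G)\ge 1$. Since $G$ is one particular $m$-vertex graph we have $\mi_k(G)\le\mi_k(m)$, hence by (iii)
\[
\zeta_k(G)\ \le\ \zeta_k(m)\ \le\ \sup_{n\ge1}\zeta_k(n)\ =\ \lim_{n\to\infty}\zeta_k(n)\ =\ \liminf_{n\to\infty}\zeta_k(n).
\]
If one prefers to keep (ii) logically independent of (iii), the same bound comes from a padding construction: writing $n=tm+r$ with $0\le r<m$ and applying Observation \ref{obsit} to the $n$-vertex graph $tG+rK_1$ gives $\mi_k(n)\ge\mi_k(G)^{t}$, so $\zeta_k(n)\ge\zeta_k(G)^{tm/n}\to\zeta_k(G)$ as $n\to\infty$ (using $\zeta_k(G)\ge1$ and $tm/n=1-r/n\to1$).

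I do not anticipate a genuine obstacle here: the only points needing a little care are the vacuous-domination convention that legitimizes the $nK_1$ example, the verification that Fekete's hypotheses truly hold (ensured by the uniform bounds of part (i) together with $\mi_k(n)\ge1$ for all $n\ge1$), and the separate handling of the degenerate case $\mi_k(G)=0$ in part (ii).
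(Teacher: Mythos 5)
Your proposal is correct. The difference from the paper is one of organization rather than substance: the paper proves (ii) directly by the padding construction (take $\lfloor n/|V(G)|\rfloor$ disjoint copies of $G$ plus isolated vertices and apply Observation \ref{obsit}, after disposing of the case $\zeta_k(G)<1$), and then obtains (iii) as a purely formal consequence of (i) and (ii), since (ii) applied to extremal graphs forces $\limsup\zeta_k(n)\le\liminf\zeta_k(n)$. You instead establish (iii) first by observing that Observation \ref{obsit} applied to two extremal graphs makes $\mi_k(n)$ supermultiplicative, so Fekete's lemma on $\log\mi_k(n)$ gives the limit, and you then read off (ii); your fallback padding argument for (ii) is exactly the paper's. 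Both routes rest on the same two ingredients (the trivial bounds $1\le\mi_k(n)\le 2^n$ and multiplicativity over disjoint unions); the Fekete packaging buys you the slightly stronger conclusion that $\lim\zeta_k(n)=\sup_n\zeta_k(n)$, while the paper's order of deduction avoids invoking any named lemma. Your explicit treatment of the degenerate case $\mi_k(G)=0$ in (ii), and of the vacuous-domination convention for $nK_1$ in (i), matches what the paper leaves implicit.
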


\begin{proof} Part $(i)$ is straightforward since $1\leq\mi_k(n)\leq 2^n$ in view of the empty graph and the number of all possible subsets of the vertex set.\\
 Suppose $\zeta_k(G)\geq 1$. If we apply Observation \ref{obsit} to $\left\lfloor \frac{n}{|V(G)|}\right\rfloor$ disjoint copies of $G$ and suitable number of additional isolated vertices, we get $\mi_k(n)\geq \mi_k(G)^{\left\lfloor\frac{n}{|V(G)|}\right\rfloor}$, hence part $(ii)$ follows.\\
Finally, part $(i)$ and part $(ii)$ together implies part $(iii)$.
\end{proof}

Our main theorems are 

\begin{theorem} \label{fo1} The order of magnitude of the maximum number of $2$-DISes is bounded as follows.
 $$1.22<\sqrt[9]{6}\leq \lim \zeta_2(n) \leq  \sqrt[5]{3}<1.2457.$$
\end{theorem}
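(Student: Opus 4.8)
For the lower bound $\lim\zeta_2(n)\ge\sqrt[9]{6}$ the plan is to exhibit a single graph $G$ with $\zeta_2(G)=\sqrt[9]{6}$ and quote Theorem~\ref{alap0}(ii); equivalently I want $\mi_2(G)=6$ with $|V(G)|=9$. I would take $G=K_3\square K_3$, the $3\times3$ rook's graph on $\{1,2,3\}^2$ (two vertices adjacent iff they agree in one coordinate). An independent set here is precisely a partial permutation matrix, hence has at most $3$ elements, and $3$ elements only if it is a full permutation matrix. Then I would verify two directions: (a) every permutation matrix $\{(i,\sigma(i))\}$ is a $2$-DIS, because a vertex $(i,j)$ with $j\ne\sigma(i)$ has exactly the two neighbours $(i,\sigma(i))$ and $(\sigma^{-1}(j),j)$ inside it; (b) no independent set of size at most $2$ is $2$-dominating, since such a set avoids a whole row and a whole column, and the vertex in that row and column has no neighbour in the set. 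Hence $\mi_2(K_3\square K_3)=3!=6$, so $\zeta_2(K_3\square K_3)=6^{1/9}$, and a one-line estimate gives $6^{1/9}>1.22$. (More generally $\zeta_2(K_m\square K_m)=(m!)^{1/m^2}$, maximised at $m=3$; the $2$-DISes of $(K_m)^t$ are exactly the codes of minimum distance $\ge 2$ that are $2$-dominating, which is the MDS-code link advertised in the abstract.)

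For the upper bound $\lim\zeta_2(n)\le\sqrt[5]{3}$, since $\zeta_2(n)=\max_{|V(G)|=n}\zeta_2(G)$ it suffices to show
\[\mi_2(G)\le 3^{|V(G)|/5}\quad\text{for every graph }G,\]
which I would prove by strong induction on $n=|V(G)|$, small $n$ being immediate. The tool I would use over and over is: if $W$ is a $2$-DIS and $a\in W$ then $N(a)\cap W=\emptyset$, so for $S:=\{a\}\cup N(a)$ (optionally enlarged by vertices already pinned outside $W$) the restriction $W\cap V(G-S)$ is again a $2$-DIS of $G-S$, because no surviving vertex outside $W$ can lose a $W$-neighbour; this yields, for each prescribed local pattern, an injection into the $2$-DISes of a strictly smaller graph. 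I would then clear away low degrees. An isolated vertex lies in every $2$-DIS, so $\mi_2(G)=\mi_2(G-v)$. A leaf $v$ with neighbour $u$ forces $v\in W,\ u\notin W$, so $\mi_2(G)\le 3^{(n-2)/5}$. For a degree-$2$ vertex $v$ with neighbours $a,b$ (now $\delta(G)=2$, so $\deg a,\deg b\ge2$): if $a\sim b$ then $v\in W$ is forced and $\mi_2(G)\le 3^{(n-3)/5}$; if $a\not\sim b$ then either $v\in W$ (delete $\{v,a,b\}$) or $v\notin W$ (then $a,b\in W$; delete $\{a,b\}\cup N(a)\cup N(b)\ni v$, at least $2+\delta(G)\ge4$ vertices), giving $\mi_2(G)\le 3^{(n-3)/5}+3^{(n-4)/5}\le 3^{n/5}$ since $3^{-3/5}+3^{-4/5}\le1$ (equivalently $3^{1/5}+1\le 3^{4/5}$). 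A degree-$3$ vertex $v$ whose neighbourhood $\{a,b,c\}$ contains an edge $a\sim b$ is also easy: then $v\notin W$ forces $c\in W$, so in \emph{both} branches some vertex is forced into $W$ with at least three further deletions, and $\mi_2(G)\le 2\cdot 3^{(n-4)/5}\le 3^{n/5}$ because $3^4\ge2^5$.

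The main obstacle is the remaining regime: $\delta(G)\ge3$ and every degree-$3$ vertex has an independent neighbourhood, so around its sparsest vertices $G$ is locally triangle-free. A crude "branch on $v$, then on a neighbour of $v$" scheme here only gives recursions of the shape $\mi_2(n)\le 2\,\mi_2(n-4)+\mi_2(n-5)$, whose growth constant is slightly above $3^{1/5}$ and hence insufficient. The plan is to branch on a minimum-degree vertex $v$ together with a carefully chosen part of $N(v)$, exploiting that (i) any two neighbours of $v$ forced into $W$ are automatically non-adjacent, so together with their neighbourhoods they account for at least $2+\delta(G)\ge5$ deleted vertices, and (ii) every vertex pinned outside $W$ is deleted for free; the aim is to make each branch delete enough vertices that the recursion has characteristic root $\le 3^{1/5}$, e.g.\ of the type $\mi_2(n)\le 3\,\mi_2(n-5)$ or $\mi_2(n)\le 2\,\mi_2(n-4)+\mi_2(n-8)$. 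For the residual locally-triangle-free configurations one can additionally invoke, or sharpen, the known bounds on the number of maximal independent sets in triangle-free hosts (recalling that every $2$-DIS is in particular a maximal independent set, since it dominates). Carrying out this case analysis so that every resulting recursion is genuinely below $3^{1/5}$ is, I expect, where essentially all the work lies; the product-graph and finite-geometry extremal examples of the paper pin down which local patterns are tight and so should steer the bookkeeping.
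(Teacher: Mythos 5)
Your lower bound is complete and coincides with the paper's: the paper also takes disjoint copies of $K_3\square K_3$ (Construction~\ref{pelda1}, Proposition~\ref{order}), and your count $\mi_2(K_3\square K_3)=6$ via permutation matrices is exactly the intended one.

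The upper bound, however, contains a genuine gap, and you have located it yourself: everything you actually prove ($\delta\le 2$, and $\delta=3$ with an edge inside $N(v)$) is the easy part, while the cases $\delta=3$ with $N(v)$ independent and $\delta\ge 4$ are left as an ``aim''. Two specific devices from the paper are missing. First, for $\delta\ge 4$ the paper does \emph{not} branch over pairs of neighbours at all (as you note, such recursions are too weak); instead it uses a double-counting step (Proposition~\ref{upper1}): a $2$-DIS avoiding $v$ contains at least two vertices of $N(v)$, while each $w\in N(v)$ lies in at most $\mi_2(n-\delta-1)$ such sets, so their number is at most $\frac{\delta}{2}\mi_2(n-\delta-1)$ and hence $\mi_2(n)\le\bigl(1+\frac{\delta}{2}\bigr)\mi_2(n-\delta-1)$. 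The maximum of $\bigl(\frac{2+d}{2}\bigr)^{1/(d+1)}$ over $d\ge 4$ is attained at $d=4$ and equals exactly $\sqrt[5]{3}$; this is where the constant comes from, and it is not reachable by deletion counts of the form you propose (pair-branching at $\delta=4$ gives roughly $\mi_2(n-5)+6\mi_2(n-6)$, whose root exceeds $1.39$). Second, for $\delta=3$ with $N(v)$ independent the paper (Theorem~\ref{upperb2}) splits on whether $|N(w_i)\cup N(w_j)|\ge 5$ for all pairs $w_i\ne w_j\in N(v)$: if yes, it pins the third neighbour \emph{out} of the set in two of the three pair-branches, obtaining $\mi_2(n)\le\mi_2(n-4)+\mi_2(n-7)+2\mi_2(n-8)$ with root $\approx 1.241<\sqrt[5]{3}$; if some pair has $|N(w_i)\cup N(w_j)|\le 4$, then any $2$-DIS avoiding $v$ must contain both $w_i$ and $w_j$ (one of them dominates $v$, and then the other cannot be $2$-dominated from outside), giving $\mi_2(n)\le\mi_2(n-4)+\mi_2(n-5)$. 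Without these two ingredients none of the recursions you actually derive closes below $\sqrt[5]{3}$, and the appeal to Hujter--Tuza cannot substitute for them, since $\sqrt{2}>\sqrt[5]{3}$.
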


\begin{theorem} \label{fo2} For every $k>2$,
 $$\sqrt[2k]{2}\leq \lim\zeta_k(n) \leq  \sqrt[k+1]{2}.$$
\end{theorem}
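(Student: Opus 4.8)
The upper bound $\lim\zeta_k(n)\le\sqrt[k+1]{2}$ and the lower bound $\sqrt[2k]{2}\le\lim\zeta_k(n)$ should be handled separately, the latter by an explicit construction and the former by a counting argument.

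For the lower bound, the plan is to exhibit a single graph $G$ on $2k$ vertices with $\mi_k(G)\ge 2$ (in fact with $\mi_k(G)=2$ being enough), and then invoke Theorem \ref{alap0}(ii), which gives $\lim\zeta_k(n)\ge\zeta_k(G)=\sqrt[2k]{\mi_k(G)}=\sqrt[2k]{2}$. The natural candidate is $K_{k,k}$, the complete bipartite graph with parts $A$ and $B$ of size $k$: since every vertex of $A$ has exactly $k$ neighbours (all of $B$) and vice versa, both $A$ and $B$ are independent and $k$-dominating, so $\mi_k(K_{k,k})\ge 2$. One should double-check there are no other $k$-DISes in $K_{k,k}$ (a proper nonempty subset of $A$ fails to $k$-dominate the vertices of $B$), but for the lower bound it suffices that $\mi_k(K_{k,k})\ge 2$. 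Taking $\lfloor n/(2k)\rfloor$ disjoint copies and padding with isolated vertices then yields the claim.

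For the upper bound, the plan is to show that in any $n$-vertex graph $G$, a $k$-DIS $I$ cannot be too "locally sparse": every vertex $v\notin I$ has at least $k$ neighbours in $I$, and every vertex $v\in I$ has no neighbour in $I$. The idea is to bound the number of $k$-DISes by a branching/charging argument. Order the vertices $v_1,\dots,v_n$; build a $k$-DIS by deciding membership one vertex at a time, and argue that on average each vertex "costs" at most a factor of $2^{1/(k+1)}$ — equivalently, that in any group of $k+1$ vertices forming a suitable local configuration, not all $2^{k+1}$ membership patterns can be completed to a $k$-DIS. A cleaner route is to count via maximal independent sets: every $k$-DIS is in particular a maximal independent set (since it is dominating, hence $1$-dominating), so $\mi_k(n)\le$ (number of maximal independent sets), but the known bound $3^{n/3}\approx 1.4422^n$ for maximal independent sets is weaker than $2^{n/(k+1)}$ only for $k\ge 2$ when $2^{1/(k+1)}<3^{1/3}$, i.e. $k\ge 2$ — so this already gives something, but not the stated exponent. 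To get exactly $\sqrt[k+1]{2}$, the plan is instead to fix any maximal independent set structure and observe that each vertex outside $I$ needs $k$ distinct in-$I$ neighbours; a double-counting of edges between $I$ and $V\setminus I$, combined with the independence of $I$, constrains $|I|$ and lets one encode a $k$-DIS by choosing, for each of roughly $n/(k+1)$ "blocks", one of two options. Concretely I would partition $V$ greedily: repeatedly pick a vertex $v\notin I$ together with $k$ of its neighbours in $I$, forming a block of size $k+1$ in which the $k$-DIS is determined up to essentially $2$ choices, discard these $k+1$ vertices, and recurse; the leftover vertices all lie in $I$ and contribute no freedom. This yields at most $2^{\lceil n/(k+1)\rceil}$ $k$-DISes, hence $\zeta_k(n)\le 2^{1/(k+1)}\cdot(1+o(1))$ and $\lim\zeta_k(n)\le\sqrt[k+1]{2}$.

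The main obstacle is making the block-decomposition argument for the upper bound fully rigorous: one must ensure that the $k+1$ vertices removed at each step genuinely carry only a bounded number ($O(1)$, ideally exactly $2$) of completions of the $k$-DIS consistent with the already-fixed part, and that interactions between blocks do not multiply the count. The delicate point is that the choice of which $k$ neighbours of $v$ to include in the block may depend on $I$ itself; the fix is to make the decomposition depend only on $G$ and a fixed vertex ordering (not on $I$), so that every $k$-DIS is injectively encoded by its restriction to a fixed transversal of the blocks. I expect a short argument: within a block $\{v,u_1,\dots,u_k\}$ where each $u_i\in N(v)$, either $v\in I$ — forcing $u_1,\dots,u_k\notin I$ — or $v\notin I$; in the latter case the $u_i$'s are only loosely constrained, so to nail the factor $2$ one actually wants the block to be chosen so that $v\notin I$ also forces the pattern on $\{u_1,\dots,u_k\}$ (for instance if the $u_i$ are pairwise adjacent or share further neighbours), which is why the extremal constructions are built from $K_{k,k}$'s and cliques. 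If a clean such decomposition is not available for all graphs, the fallback is the entropy/counting inequality: $\log_2\mi_k(n)\le \max\{|I| : I \text{ arises}\} \le n - k\cdot(\text{min number of outside vertices})$ together with the trivial bound that an independent set plus its domination requirement forces $|V\setminus I|\ge$ something linear, tuned to give exponent $1/(k+1)$.
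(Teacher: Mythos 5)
Your lower bound argument is correct and coincides with the paper's: the paper proves $\zeta_k(K_{k,k})=\sqrt[2k]{2}$ (Proposition \ref{constr}, the two $k$-DISes being the two sides of the bipartition) and then invokes Theorem \ref{alap0}(ii), exactly as you propose. No issue there.

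The upper bound, however, is where your proposal has a genuine gap, and you flag it yourself without resolving it. Your block-decomposition plan --- remove a block $\{v,u_1,\dots,u_k\}$ with $u_i\in N(v)$ and claim at most $2$ completions per block --- fails for the reason you identify: when $v\notin I$ the pattern on $\{u_1,\dots,u_k\}$ is not forced (only that at least $k$ neighbours of $v$, not necessarily these particular $u_i$, lie in $I$), so a block of size $k+1$ can carry far more than $2$ patterns; moreover a decomposition depending only on $G$ cannot in general place $k$ in-$I$ neighbours of $v$ into $v$'s block. Neither your ``fallback'' entropy inequality nor the reduction to maximal independent sets (which, as you note, gives only $3^{n/3}$) closes this. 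The idea you are missing is the one in Proposition \ref{upper1}: work with a vertex $v$ of \emph{minimum} degree $\delta$ and set up a recursion rather than a partition. If $\delta<k$ then $v$ is forced into every $k$-DIS and $N[v]$ is removed with no branching. If $\delta\ge k$, then either $v\in D$, contributing at most $\mi_k(n-\delta-1)$ sets, or $v\notin D$ and at least $k$ of its $\delta$ neighbours lie in $D$; double counting pairs $(w,D)$ with $w\in N(v)\cap D$ --- each such $w$ lies in at most $\mi_k(n-\delta-1)$ sets because $|N[w]|\ge\delta+1$ by minimality of $\delta$, and each $D$ is counted at least $k$ times --- gives at most $\tfrac{\delta}{k}\mi_k(n-\delta-1)$ further sets. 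Hence $\mi_k(n)\le\bigl(1+\tfrac{\delta}{k}\bigr)\mi_k(n-\delta-1)$, and since $\bigl(\tfrac{k+d}{k}\bigr)^{1/(d+1)}$ is maximized over $d\ge k$ at $d=k$ with value $2^{1/(k+1)}$, the bound $\mi_k(n)=O\bigl(\sqrt[k+1]{2}^{\,n}\bigr)$ follows by induction. It is the minimum-degree hypothesis that makes the ``cost'' of fixing a single neighbour $w$ into $D$ already worth $\delta+1$ deleted vertices; this is exactly the leverage your fixed-block scheme lacks.
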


The paper is built up as follows.
Section 2 summarizes the main known results on the number of $k$-DISes  for $k=1$.

In Section 3, we give a simple characterization of graphs which contain a $k$-dominating independent set and point out the existence of large graph families not containing $2$-DISes. Next we prove that if a $k$-DIS exists in a tree, then it is unique. Furthermore we present an efficient algorithm which provides a $k$-DIS in a given tree or proves the non-existence of such a set. Finally, we present graph constructions containing many $k$-DISes. Proposition \ref{expect} essentially states that a random graph contains a huge number of $k$-DISes for any fixed $k$. 

We prove the lower bounds of Theorem \ref{fo1} and Theorem \ref{fo2} in Section 4.
These bounds are based on constructions. The presented graphs providing the lower bound on $ \zeta_k(n)$ are of different structure in the cases $k=1, 2, 3$ and $k\geq 4$.
 One of them leads to the determination of the number of ternary $(n, M, 2)_3$ MDS codes as well. 

Extremal constructions are often obtained from finite geometry. (For  detailed descriptions we refer to \cite{FS}.) In our case,  specific examples 
for different types of graphs with many $2$-dominating ($k$-dominating) independent sets are given based on hyperovals and generalized $\{k;n\}$-arcs.

 Section 5 is devoted to the upper bound part of Theorem \ref{fo1} and Theorem \ref{fo2}. 
 At last, some open questions and concluding remarks are collected in Section 6.



\bigskip
\section{Results on the number of maximal independent sets: $1$-DISes}
\label{2}

Erd\H os and Moser raised the question to determine the maximum number of maximal cliques in $n$-vertex graphs.  Note that it is the same as the maximum number of maximal independent sets (that is, $1$-DISes) an $n$-vertex graph can have.

Answering a question of Erd\H os and Moser,  Moon and Moser proved the following well known

\begin{theorem}[Moon-Moser, \cite{moon}]\label{alap} The following equality holds:

$$\mi_1(n)= \left\{ \begin{array}{lll} 3^{n/3} & \textrm{if } n\equiv 0 \pmod 3 \\ \frac{4}{3}\cdot3^{ \lfloor n/3 \rfloor} & \textrm{if } n\equiv 1 \pmod 3\\ 
2\cdot3^{ \lfloor n/3 \rfloor} & \textrm{if } n\equiv 2 \pmod 3 \end{array} \right.$$
\end{theorem}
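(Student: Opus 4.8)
The statement to prove is the Moon–Moser formula for $\mi_1(n)$, the maximum number of maximal independent sets in an $n$-vertex graph. The plan is to prove this in two halves: first exhibit a construction achieving each value (the lower bound), then show no graph can do better (the upper bound). For the construction, I would take the graph $\frac{n}{3}K_3$ (a disjoint union of triangles) when $3\mid n$; by Observation \ref{obsit}, $\mi_1(\frac{n}{3}K_3)=\mi_1(K_3)^{n/3}=3^{n/3}$, since a triangle has exactly three maximal independent sets (its three vertices). For $n\equiv 1\pmod 3$ I would replace one triangle by a $K_4$ or by two disjoint edges $2K_2$ (each gives the factor $4$ in place of $3$, and $4\cdot 3^{\lfloor n/3\rfloor} /3 = \frac{4}{3}3^{\lfloor n/3\rfloor}$ — wait, more cleanly: use $(n-4)/3$ triangles plus one $K_4$, giving $4\cdot 3^{(n-4)/3}=\frac{4}{3}\cdot 3^{\lfloor n/3\rfloor}$). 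For $n\equiv 2\pmod 3$, use $(n-2)/3$ triangles plus one edge $K_2$, giving $2\cdot 3^{(n-2)/3}=2\cdot 3^{\lfloor n/3\rfloor}$.

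The real content is the upper bound: every $n$-vertex graph $G$ has at most $f(n)$ maximal independent sets, where $f$ is the claimed function. I would argue by induction on $n$. The base cases $n\le 4$ are checked directly. For the inductive step, pick a vertex $v$ of smallest degree $d=d(v)$ and split the maximal independent sets $I$ of $G$ according to whether $v\in I$. If $v\notin I$, then $I$ must contain a neighbor of $v$ (else $I\cup\{v\}$ would be independent, contradicting maximality), so the maximal independent sets avoiding $v$ are, for some neighbor $u$, maximal independent sets of $G$ containing $u$; each such set restricted to $V(G)\setminus N[u]$ is a maximal independent set there, giving at most $\sum_{u\in N(v)} \mi_1(G-N[u]) \le d\cdot f(n-d-1)$ of them (using $d(u)\ge d$ so $|N[u]|\ge d+1$). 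If $v\in I$, then $I\setminus\{v\}$ is a maximal independent set of $G-N[v]$, giving at most $f(n-d-1)$ of them. Hence the number of maximal independent sets of $G$ is at most $(d+1)f(n-d-1)$. The main obstacle is then the purely arithmetic verification that $(d+1)f(n-d-1)\le f(n)$ for all relevant $d\ge 1$; this is where the case analysis $n\bmod 3$ and the delicate small-degree cases ($d=1$: one must use a sharper split, e.g. pair $v$ with its unique neighbor $w$ and look at $\mi_1(G-\{v,w\})$ together with $\mi_1(G-N[w])$) come in. One checks $g(x)=x^{1/x}$ is maximized at $x=3$ among integers, which is why the factor $3$ per $3$ vertices is optimal, and the boundary cases force the factors $4/3$ and $2$.

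I expect the degree-$1$ and degree-$2$ cases of the induction to be the genuinely fiddly part, since the crude bound $(d+1)f(n-d-1)$ is too weak there and must be replaced by a two-step recursion; everything else is bookkeeping modulo $3$. Alternatively, one can follow Moon and Moser's original counting argument directly without dressing it as an induction, but the inductive packaging makes the case analysis cleaner. Since this theorem is quoted from the literature as Theorem \ref{alap}, the paper will presumably just cite \cite{moon} rather than reproduce the proof; if a proof is included, the inductive scheme above is the standard one.
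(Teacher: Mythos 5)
The paper offers no proof of Theorem \ref{alap}: it is quoted from Moon and Moser with the citation \cite{moon}, exactly as you anticipated in your last sentence, so there is no in-paper argument to compare against. Judged on its own, your sketch is the standard proof and is correct in outline: the constructions $\frac{n}{3}K_3$, $(\lfloor n/3\rfloor-1)K_3+K_4$ (or $+2K_2$) and $\lfloor n/3\rfloor K_3+K_2$ match the extremal graphs the paper lists right after the theorem, and the upper bound via a minimum-degree vertex $v$ with the split $\mi_1(G)\le \mi_1(G-N[v])+\sum_{u\in N(v)}\mi_1(G-N[u])\le (d+1)f(n-d-1)$ is sound (this is also the template the paper itself uses later in Proposition \ref{upper1}). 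One remark: your worry that the crude bound $(d+1)f(n-d-1)$ is ``too weak'' for $d=1,2$ and needs a two-step recursion is unfounded. Writing $f(m)=c_{m\bmod 3}\,3^{m/3}$ with $c_0=1$, $c_1=\tfrac43 3^{-1/3}\approx 0.9245$, $c_2=2\cdot 3^{-2/3}\approx 0.9615$, one checks that $(d+1)3^{-(d+1)/3}\le c_{n\bmod 3}/c_{(n-d-1)\bmod 3}$ holds for every $d\ge 1$ and every residue class (with equality in several cases, which is harmless for the induction), so the single-step recursion closes once the small base cases and the isolated-vertex case are handled. The genuinely delicate part of Moon--Moser is the uniqueness of the extremal graphs, which neither your sketch nor the paper's statement requires here.
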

\noindent Moreover, they proved that the equality is attained if and only if the graph $G$ is isomorphic to the graph $\frac{n}{3} K_3$  (if $n\equiv 0 \pmod 3$); to one of the graphs $(\lfloor \frac{n}{3} \rfloor-1)K_3 + K_4$ or $(\lfloor \frac{n}{3} \rfloor-1)K_3 + 2K_2$ (if $n\equiv 1 \pmod 3$); to $\lfloor n/3 \rfloor K_3 + K_2$ (if $n\equiv 2 \pmod 3$).

\begin{corollary}  $\lim \zeta_1(n)=  \sqrt[3]{3}$,  and $\lim \zeta_k(n) \in [1, \sqrt[3]{3}]$ for all $k>1$.
\end{corollary}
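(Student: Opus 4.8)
The plan is to extract the $k=1$ case directly from the Moon--Moser formula and then obtain the general statement by a simple containment argument. First I would note that Theorem~\ref{alap} expresses $\mi_1(n)$ as $c(n)\cdot 3^{\lfloor n/3\rfloor}$ with $c(n)\in\{1,\tfrac{4}{3},2\}$, a quantity bounded away from $0$ and $\infty$. Since $\lfloor n/3\rfloor/n\to 1/3$ and $\sqrt[n]{c(n)}\to 1$, taking $n$-th roots gives $\zeta_1(n)=\sqrt[n]{\mi_1(n)}\to\sqrt[3]{3}$. (The existence of this limit is already guaranteed by Theorem~\ref{alap0}(iii); here we additionally pin down its value.)

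For the second assertion, the key observation is that for every graph $G$ and every $k>1$, each $k$-DIS of $G$ is also a $1$-DIS of $G$: a $k$-dominating set is in particular a dominating set, so a $k$-dominating independent set is a dominating independent set, i.e.\ a maximal independent set, which is exactly a $1$-DIS by the identification recorded in Section~\ref{1}. Hence $\mi_k(G)\le\mi_1(G)$ for all $G$, and in particular $\mi_k(n)\le\mi_1(n)$, so $\zeta_k(n)\le\zeta_1(n)$ for every $n$. Letting $n\to\infty$ (the limit exists by Theorem~\ref{alap0}(iii)) yields $\lim\zeta_k(n)\le\sqrt[3]{3}$, while $\lim\zeta_k(n)\ge 1$ follows from Theorem~\ref{alap0}(i).

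I do not expect a real obstacle in this argument; essentially everything reduces to the elementary inclusion $\{\,k\text{-DISes of }G\,\}\subseteq\{\,1\text{-DISes of }G\,\}$ together with the asymptotics of the Moon--Moser numbers. The only point that warrants a moment's care is the identification of $1$-dominating independent sets with maximal independent sets, which is what makes that inclusion — and hence the monotonicity $\mi_k(n)\le\mi_1(n)$ — valid; the remainder is routine limit bookkeeping.
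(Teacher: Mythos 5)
Your proposal is correct and matches the paper's (implicit) reasoning: the corollary is stated without proof as an immediate consequence of the Moon--Moser formula, and the intended argument is exactly yours — the asymptotics $\mi_1(n)=\Theta(3^{n/3})$ give $\lim\zeta_1(n)=\sqrt[3]{3}$, while the inclusion of $k$-DISes among maximal independent sets together with Theorem~\ref{alap0}(i) and (iii) gives $\lim\zeta_k(n)\in[1,\sqrt[3]{3}]$.
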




\noindent For connected graphs the question was raised by Wilf \cite{wilf},  and the answer is fairly similar.

\begin{theorem}[F\"uredi \cite{furedi}, Griggs,  Grinstead, Guichard \cite{griggs}] Let $\mathcal{F}_{con}$ be the family of connected graphs.
Then 

$$\mi_1(n, \mathcal{F}_{con})= \left\{ \begin{array}{lll} \frac{2}{3}\cdot3^{n/3}+\frac{1}{2}\cdot2^{n/3} & \textrm{if } n\equiv 0 \pmod 3 \\
3^{ \lfloor n/3 \rfloor}+ \frac{1}{2}\cdot2^{ \lfloor n/3 \rfloor}  & \textrm{if } n\equiv 1 \pmod 3\\ 
\frac{4}{3}\cdot3^{ \lfloor n/3 \rfloor}+\frac{3}{4}\cdot2^{ \lfloor n/3 \rfloor} & \textrm{if } n\equiv 2 \pmod 3 \end{array} \right.$$
\end{theorem}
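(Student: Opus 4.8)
The plan is to treat the two inequalities separately: the lower bound by exhibiting extremal connected graphs, and the upper bound by an induction on $n$ reducing the connected problem to the Moon--Moser bound of Theorem~\ref{alap}.

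\emph{Lower bound.} For each residue of $n$ modulo $3$ I would write down one explicit connected $n$-vertex graph realising the claimed value. The model is a ``star of triangles'': take a vertex $v$, join it to exactly one vertex of each of $t\approx n/3$ pairwise disjoint triangles, and attach to $v$ a small residue-dependent gadget --- a pendant edge, a triangle joined to $v$ through all three of its vertices, and so on --- to absorb the $O(1)$ leftover vertices. One then counts $\mi_1$ by conditioning on whether $v$ lies in the set. Maximal independent sets containing $v$ must omit the $v$-neighbour of each triangle, so they have two choices inside each triangle together with a bounded factor from the gadget, contributing an amount of order $2^{n/3}$. Maximal independent sets avoiding $v$ must, for maximality, pick exactly one vertex of each triangle; in the right construction this choice simultaneously dominates $v$, and the contribution is of order $3^{n/3}$. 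Tuning the gadget so the leading constants come out right yields exactly $\tfrac23\,3^{n/3}+\tfrac12\,2^{n/3}$, $3^{\lfloor n/3\rfloor}+\tfrac12\,2^{\lfloor n/3\rfloor}$, and $\tfrac43\,3^{\lfloor n/3\rfloor}+\tfrac34\,2^{\lfloor n/3\rfloor}$ in the three cases.

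\emph{Upper bound.} I would induct on $n$, checking small cases directly, using throughout the identity
$$\mi_1(G)=\mi_1\!\left(G-N[v]\right)+\bigl|\{\text{maximal independent sets of }G\text{ that avoid }v\}\bigr|,$$
whose second term equals the number of maximal independent sets of $G-v$ dominating $v$ and is hence at most $\mi_1(G-v)$. One then chooses $v$ according to the structure of $G$: if $G$ has a leaf, conditioning on the leaf and its neighbour writes $\mi_1(G)$ as a sum of two $\mi_1$-values of smaller graphs; if $G$ has minimum degree $\ge 2$ but a cut vertex $v$, splitting $G$ into the two connected sides $G_1,G_2$ meeting only at $v$ writes $\mi_1(G)$ through the $\mi_1$-counts of $G_1$, $G_2$, $G_1-N[v]$, $G_2-N[v]$; and if $G$ is $2$-connected then $G-v$ is connected, so taking $v$ of minimum degree turns the identity above into an effective estimate once that degree is not too small. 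In every branch the pieces that remain connected are bounded by the induction hypothesis and the pieces that fall apart by Theorem~\ref{alap}, the only cases requiring separate treatment being the cycles $C_n$ and a finite list of small graphs. Threading the equality cases through the recursions then identifies the extremal graphs with those of the lower bound.

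\emph{Where the difficulty lies.} Essentially all the work is the bookkeeping of the upper bound. Deleting $N[v]$ or a cut vertex typically disconnects $G$, so the connected induction hypothesis cannot be applied to the resulting pieces and must be blended with the weaker bound of Theorem~\ref{alap}; arranging that the exact quantities $\tfrac23\,3^{n/3}+\tfrac12\,2^{n/3}$ and its two siblings are preserved under all of these recursions, for every residue class and with no slack to spare, is what forces the long case analysis and the vertex-by-vertex choice of where to branch. Pinning down all the extremal graphs rather than only the extremal value adds a further, more tedious layer.
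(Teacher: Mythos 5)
This theorem is not proved in the paper at all: it is quoted from F\"uredi \cite{furedi} and Griggs--Grinstead--Guichard \cite{griggs}, with only a one-sentence remark about the shape of the extremal graphs. So there is no in-paper argument to compare yours against; what you have written has to stand on its own, and as it stands it is an outline of the strategy of the cited works rather than a proof.

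Two concrete gaps. First, the lower bound. The ``star of triangles'' you describe (an apex $v$ joined to one vertex of each of $t$ disjoint triangles) does \emph{not} realise the claimed values: for $n=3t+1$ it has exactly $2^{t}$ maximal independent sets through $v$ and $3^{t}-2^{t}$ avoiding $v$ (those avoiding $v$ must pick at least one neighbour of $v$), for a total of $3^{t}$, which falls short of the required $3^{t}+\tfrac12\,2^{t}$. The correct extremal graphs are not apex-plus-gadget but hub-clique constructions --- e.g.\ for $n\equiv 0\pmod 3$ a distinguished triangle one of whose vertices is joined to one vertex of each of the remaining $n/3-1$ triangles, giving $2\cdot 3^{n/3-1}+2^{n/3-1}$, and for $n\equiv 1\pmod 3$ a $K_4$ hub giving $3^{\lfloor n/3\rfloor}+2^{\lfloor n/3\rfloor-1}$. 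Since the theorem is an exact equality, ``tuning the gadget so the leading constants come out right'' is not a step you can defer; the constructions must be exhibited and counted. Second, the upper bound: your recursion $\mi_1(G)=\mi_1(G-N[v])+|\{\text{MIS avoiding }v\}|$ and the branching on leaves, cut vertices, and minimum degree is indeed the skeleton of the known proofs, but every quantitative claim (that each branch stays below the exact bound, that cycles and a finite list of exceptions are the only problematic cases, that equality forces the stated extremal graphs) is asserted rather than carried out, and you yourself identify this bookkeeping as ``essentially all the work.'' A proof in which the entire load-bearing case analysis is deferred is a plan, not a proof. If your intent is to use this result the way the paper does --- as a known benchmark --- the right move is simply to cite \cite{furedi,griggs}.
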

The extremal graphs are determined as well. In these graphs, there is a vertex of maximum degree, and its removal yields a member of the extremal graphs list of Theorem \ref{alap}.
\medskip

 \noindent Wilf, and later Sagan studied the family of trees.

\begin{theorem}[\cite{wilf, sagan}]\label{tree} Let $\mathcal{T}$ be the family of  trees.
Then  the following equality holds:

$$\mi_1(n, \mathcal{T})= \left\{ \begin{array}{ll} \frac{1}{2}2^{n/2}+1 & \textrm{if } n\equiv 0 \pmod 2 \\ 2^{ \lfloor n/2 \rfloor} & \textrm{if } n\equiv 1 \pmod 2 \end{array} \right.$$

The extremal trees can be classified. 
\end{theorem}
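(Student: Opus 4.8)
Write $f(n)$ for the right-hand side of the claimed formula, with the convention $f(0)=1$; thus $f(n)=2^{(n-1)/2}$ for odd $n$, $f(n)=2^{n/2-1}+1$ for even $n$, and $f$ is non-decreasing. The plan is to prove $\mi_1(n,\mathcal T)\ge f(n)$ by exhibiting trees and $\mi_1(n,\mathcal T)\le f(n)$ by strong induction on $n$. For the lower bound, when $n=2m+1$ is odd I would take the spider $S$ with a center $c$ and $m$ pendant paths of length $2$: a maximal independent set of $S$ either contains $c$ (then it must contain all $m$ outer leaves, one set) or misses $c$, and in the latter case each leg offers an independent binary choice, constrained only by the requirement that $c$ be dominated, which rules out exactly the choice taking all $m$ outer leaves; hence $\mi_1(S)=1+(2^m-1)=2^m$. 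When $n=2m$ is even, take an edge $xy$ together with any distribution of $m-1$ pendant paths of length $2$ among its two ends; the analogous count gives $2^{m-1}+1$.

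For the upper bound, let $T$ be a tree on $n$ vertices; for $n\le 2$, or $T$ a star, the bound is immediate, so assume the diameter is $\ge 3$ and fix a longest path $v_0v_1\cdots v_d$ with $d\ge 3$, so that $v_0$ is a leaf, $v_2$ is not a leaf, and — the path being longest — every neighbor of $v_1$ other than $v_2$ is a leaf. I would split the maximal independent sets $I$ of $T$ according to whether $v_1\in I$. If $v_1\notin I$, then all leaf-neighbors of $v_1$ lie in $I$, and $I\mapsto I\cap V(T')$ is an injection into the maximal independent sets of the tree $T'$ obtained by deleting $v_1$ and its leaf-neighbors, with $|V(T')|=n-d(v_1)$. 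If $v_1\in I$, then $I\mapsto I\setminus\{v_1\}$ is an injection into the maximal independent sets of the \emph{forest} $T-N[v_1]$, which has $n-d(v_1)-1$ vertices. Combining the induction hypothesis with the elementary estimate
\[
\mi_1(F)=\prod_i \mi_1(T_i)\le \prod_i f(|V(T_i)|)\le 2^{\lfloor |V(F)|/2\rfloor}
\]
for every forest $F$ with components $T_i$ (valid since $f(j)\le 2^{j/2}$ always, with an extra factor $\sqrt2$ saved whenever some component has odd order), I obtain $\mi_1(T)\le f(n-d(v_1))+2^{\lfloor(n-d(v_1)-1)/2\rfloor}$. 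For $d(v_1)=2$ the right-hand side equals $f(n-2)+2^{\lfloor(n-3)/2\rfloor}$, which a short parity check shows is exactly $f(n)$; for $d(v_1)\ge 3$ it is, by monotonicity of $f$, at most $f(n-3)+2^{\lfloor(n-4)/2\rfloor}$, which the same check shows is strictly less than $f(n)$. In every case $\mi_1(T)\le f(n)$.

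The step I expect to be the main obstacle — and the reason the naive ``delete a leaf and its neighbor, then recurse'' argument fails — is the forest $T-N[v_1]$ appearing when $v_1\in I$: removing the degree-$\ge 2$ vertex $v_1$ together with $v_2$ disconnects $T$, and a forest on $m$ vertices can have $2^{\lfloor m/2\rfloor}$ maximal independent sets (e.g.\ a perfect matching), which for even $m\ge 4$ strictly exceeds $f(m)$. So the tree bound cannot be applied component by component; what rescues the induction is precisely that the \emph{product} of the tree bounds over the components is still $\le 2^{\lfloor m/2\rfloor}$, and this is only just strong enough that the case $d(v_1)=2$ closes with equality. Tracking equality backwards through the induction then forces $d(v_1)=2$ at every stage, forces $T-\{v_0,v_1\}$ to be an extremal tree and $T-N[v_1]$ an extremal forest — hence a disjoint union of copies of $K_2$ together with at most one further extremal tree — and forces all the injections above to be bijections; unwinding this identifies the extremal trees, uniquely as the spider $S$ when $n$ is odd and as a classified family (containing the lower-bound construction) when $n$ is even.
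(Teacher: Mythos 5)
The paper does not prove Theorem \ref{tree} at all --- it is imported from Wilf and Sagan as a known result --- so there is no internal proof to compare against; what can be said is that your argument is correct and is essentially Sagan's elementary proof rather than Wilf's original, more algorithmic one. The two pillars both check out: the lower-bound spiders give $1+(2^m-1)=2^m$ for $n=2m+1$ and $2^a+2^b+(2^a-1)(2^b-1)=2^{a+b}+1$ for $n=2m$ with $a+b=m-1$; and in the induction, the split at the penultimate vertex $v_1$ of a longest path, the two injections (into the MIS of the tree $T-(\{v_1\}\cup L(v_1))$ and of the forest $T-N[v_1]$), and the forest estimate $\prod_i f(|V(T_i)|)\le 2^{\lfloor |V(F)|/2\rfloor}$ (using that an odd-order component saves a factor $\sqrt2$, and that an odd total order forces such a component) are all sound. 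The parity computations closing the cases $d(v_1)=2$ and $d(v_1)\ge 3$ are also right, and you correctly identify the forest bound as the crux --- the naive component-by-component use of the tree bound would indeed fail on a perfect matching. The only part that is a sketch rather than a proof is the classification of extremal trees; tracking equality through the induction as you describe does work, but ``unwinding this identifies the extremal trees'' leaves the actual case analysis (in particular, which even-order configurations attain $2^{m-1}+1$) to the reader. Since the theorem as stated only asserts that the extremal trees ``can be classified,'' this is acceptable, but if you want the full statement of Wilf and Sagan you would need to carry that bookkeeping out explicitly.
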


\begin{corollary}  $\lim \zeta_1(n, \mathcal{T})=  \sqrt{2}$.
\end{corollary}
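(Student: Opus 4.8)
The plan is to read the limit off directly from the closed-form expression in Theorem~\ref{tree}, handling the two residue classes of $n$ modulo $2$ separately; unlike the general situation of Theorem~\ref{alap0}(iii), no Fekete-type (sub/supermultiplicativity) argument is needed once the exact count is in hand.

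First I would consider even $n$. Here $\mi_1(n,\mathcal{T}) = \tfrac12 2^{n/2}+1 = 2^{n/2-1}+1$, so for all even $n$ we have the sandwich $2^{n/2-1} \le \mi_1(n,\mathcal{T}) \le 2^{n/2}$. Taking $n$-th roots yields $2^{\frac12-\frac1n} \le \zeta_1(n,\mathcal{T}) \le 2^{1/2}$, and letting $n\to\infty$ through even values forces $\zeta_1(n,\mathcal{T})\to\sqrt2$ by squeezing.

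Next I would treat odd $n$. Now $\mi_1(n,\mathcal{T}) = 2^{\lfloor n/2\rfloor} = 2^{(n-1)/2}$ exactly, hence $\zeta_1(n,\mathcal{T}) = 2^{(n-1)/(2n)}$, and since $(n-1)/(2n)\to \tfrac12$ as $n\to\infty$, again $\zeta_1(n,\mathcal{T})\to\sqrt2$. Both parity subsequences tend to the common value $\sqrt2$, so the full sequence converges and $\lim\zeta_1(n,\mathcal{T})=\sqrt2$.

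There is essentially no obstacle here: the computation is entirely routine, and the only point requiring a line of care is that the additive constant $+1$ in the even case and the floor in the exponent in the odd case are asymptotically negligible after extracting the $n$-th root, which the displayed sandwiches make transparent. For completeness one could also remark that the lower bound $\liminf\zeta_1(n,\mathcal{T})\ge\sqrt2$ follows independently by applying Observation~\ref{obsit} to roughly $n/2$ disjoint copies of a fixed small extremal tree (padded by isolated vertices as in the proof of Theorem~\ref{alap0}(ii)), but this is superfluous given the exact formula of Theorem~\ref{tree}.
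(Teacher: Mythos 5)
Your computation is correct and is exactly how the paper obtains this corollary: it is read off directly from the closed-form counts in Theorem~\ref{tree} by taking $n$-th roots, with the additive $+1$ and the floor absorbed in the limit. (Your final superfluous remark would need care anyway, since a disjoint union of trees padded with isolated vertices is a forest rather than a tree, so Observation~\ref{obsit} does not directly apply within the family $\mathcal{T}$; but as you note, the exact formula makes this unnecessary.)
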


\begin{theorem}[Hujter, Tuza \cite{HT}]
Every triangle-free graph on $n \geq 4$ vertices has at most $2^{n/2} $ or $5 \cdot 2^{( n - 5 )/2} $ maximal independent sets, whether $n$ is even or odd. In each case, the extremal graph is unique.
\end{theorem}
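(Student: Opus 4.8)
I would prove this by strong induction on $n$, handling both parities simultaneously through the function $f(n):=2^{n/2}$ for even $n$ and $f(n):=5\cdot 2^{(n-5)/2}$ for odd $n$ (the claim being $\mi_1(G)\le f(n)$ for every triangle-free $G$ on $n\ge 4$ vertices). The arithmetic I will lean on repeatedly is $f(n)=2f(n-2)$, the supermultiplicativity $f(a)f(b)\le f(a+b)$ with strict inequality when both $a,b$ are odd, monotonicity of $f$, and the trivial estimate $\mi_1(H)\le 2$ for triangle-free $H$ on at most $3$ vertices. A few small base cases (up to $n\approx 10$) would be checked by hand; the crucial one is $n=5$, where $C_5$ is the unique triangle-free graph attaining $5$ maximal independent sets.

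For the inductive step I pick a vertex $v$ of minimum degree. If $d(v)=0$ then $\mi_1(G)=\mi_1(G-v)\le f(n-1)<f(n)$. If $d(v)=1$ with neighbour $u$, then every maximal independent set contains exactly one of $u,v$, giving the identity $\mi_1(G)=\mi_1(G-\{u,v\})+\mi_1(G-N[u])$. When $d(u)=1$ the two summands coincide, so $\mi_1(G)=2\mi_1(G-\{u,v\})\le 2f(n-2)=f(n)$, and equality forces $G-\{u,v\}$ to be the unique extremal graph on $n-2$ vertices; since $\{u,v\}$ is then a $K_2$-component, this yields $G\cong\tfrac n2K_2$ (even $n$) or $G\cong C_5+\tfrac{n-5}2K_2$ (odd $n$). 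When $d(u)\ge 2$ the two summands live on at most $n-2$ and $n-3$ vertices, and $f(n-2)+f(n-3)<f(n)$ in both parities, so the bound is strict.

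The heart of the proof is $\delta(G)\ge 2$, where the estimates become razor-thin for odd $n$ because of the $C_5$ exception. The clean starting point is that the maximal independent sets avoiding $v$ inject into those of $G-v$, so $\mi_1(G)\le\mi_1(G-N[v])+\mi_1(G-v)\le f(n-3)+f(n-1)$; for even $n$ this equals $\tfrac{15}{16}f(n)<f(n)$ and finishes that parity, but for odd $n$ it overshoots $f(n)$ and a finer split is needed. If $G$ is $2$-regular it is a disjoint union of cycles $C_{\ell_i}$ with all $\ell_i\ge 4$; since $\mi_1(G)=\prod_i\mi_1(C_{\ell_i})$, since $\mi_1(C_\ell)\le f(\ell)$ for every $\ell\ge 4$ with equality only at $\ell=5$ (these are Perrin numbers $2,5,5,7,10,\dots$), and since $f$ is supermultiplicative, one gets $\mi_1(G)\le\prod_i f(\ell_i)\le f(n)$ with equality only for a single $C_5$, hence strict for odd $n>5$. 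If $G$ is not $2$-regular, take $v$ with $d(v)\ge 3$; triangle-freeness together with $\delta(G)\ge 2$ prevents $G-v$ from being $\tfrac{n-1}2K_2$ (a perfect matching on $V\setminus\{v\}$ would force a degree-$1$ vertex of $G$), so $\mi_1(G-v)$ is bounded away from $f(n-1)$, and feeding this into $\mi_1(G)\le\mi_1(G-N[v])+\mi_1(G-v)\le f(n-4)+\mi_1(G-v)$ closes the case.

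Uniqueness then follows by tracing equality through every branch: the only way to reach $\mi_1(G)=f(n)$ is the $d(u)=d(v)=1$ pendant-$K_2$ reduction (or the $n=5$ base case), which inductively pins down $G$ as $\tfrac n2K_2$ or $C_5+\tfrac{n-5}2K_2$. The main obstacle I anticipate is exactly the quantitative input used in the last step: a \emph{second-best} estimate to the effect that a triangle-free graph on an even number $m$ of vertices which is not $\tfrac m2K_2$ has at most $\tfrac{25}{32}\cdot 2^{m/2}$ maximal independent sets (with $2C_5+\tfrac{m-10}2K_2$ showing this is essentially sharp). I would establish it by a parallel induction of the same shape; together with the finite list of small base cases needed to launch both inductions, this auxiliary statement is where most of the real work lies.
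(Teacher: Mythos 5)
First, a point of reference: the paper does not prove this statement at all --- it is quoted as background from Hujter and Tuza \cite{HT} --- so there is no in-paper proof to compare yours against. Judged on its own, your skeleton is arithmetically sound: the relations $f(n)=2f(n-2)$ and $f(a)f(b)\le f(a+b)$ (strict when $a,b$ are both odd), the pendant-edge identity $\mi_1(G)=\mi_1(G-\{u,v\})+\mi_1(G-N[u])$, the bound $\mi_1(G)\le \mi_1(G-N[v])+\mi_1(G-v)$, the computation $f(n-3)+f(n-1)=\tfrac{15}{16}f(n)$ that disposes of even $n$ once $\delta\ge 2$, and the Perrin-number treatment of $2$-regular graphs all check out, and this inductive minimum-degree case analysis is indeed the general shape of the known proofs. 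The uniqueness bookkeeping (equality only through the pendant-$K_2$ reduction or the $C_5$ base case) is also coherent.

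The genuine gap is the auxiliary ``second-best'' lemma, which you correctly identify as carrying most of the weight but leave unproved; moreover, it is not as routine as ``a parallel induction of the same shape'' suggests. Concretely, in that parallel induction the pendant-edge case with $d(v)=1$ and $d(u)\ge 2$ gives only $\mi_1(G)\le f(m-2)+f(m-3)=\tfrac{13}{16}\cdot 2^{m/2}=\tfrac{26}{32}\cdot 2^{m/2}$, which already exceeds the target $\tfrac{25}{32}\cdot 2^{m/2}$. To close it you must rule out simultaneous near-equality in the two summands --- e.g.\ $\mi_1(G-N[u])=f(m-3)$ forces $G-N[u]$ to be $C_5$ plus a perfect matching (using the uniqueness part of the main theorem at smaller orders), while $\mi_1(G-\{u,v\})=f(m-2)$ forces $G-\{u,v\}$ to be a perfect matching, and these are incompatible because $G-N[u]$ arises from $G-\{u,v\}$ by deleting $d(u)-1\ge 1$ further vertices --- and similar stability refinements are needed in the other branches of that induction. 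So: right strategy, correct reductions and constants in the main induction, but the stability statement that makes the odd case work is asserted rather than established, and that is exactly where the difficulty of the theorem lives.
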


\medskip

\section{$k$-DISes --- existence and characterizations }

While kernels ($1$-DISes)  obviously exist in every graph, this is far from being true for $k$-DISes for a fixed $k>1$.  To illustrate this phenomenon, consider

\begin{proposition} Let $G$ be (i) a  complete graph, (ii) an odd cycle, (iii) the complement of a connected triangle-free  graph with at least $2$ edges, w.r.t. $K_n$. Then $G$ does not contain a $k$-dominating independent set for $k>1$.   
\end{proposition}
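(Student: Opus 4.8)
The plan is to dispose of the three cases separately, each time first using the independence of a putative $k$-dominating set $D$ to pin down its shape and then invoking a degree bound to defeat the $k$-domination requirement. Throughout I will use that for $k\ge 2$ any $k$-DIS is in particular a $2$-DIS, so it suffices to rule out $2$-DISes. Case (i) is then immediate: if $G=K_n$ with $n\ge 2$, the only independent sets are $\emptyset$ and singletons $\{v\}$; in the former a vertex outside $D$ has $0$ neighbours in $D$, in the latter exactly one, so the $2$-domination condition fails either way.

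For (ii), let $G=C_n$ with $n$ odd and cyclically ordered vertices $v_1,\dots,v_n$. First observe $D=V(G)$ is not independent. Otherwise I would fix some $v_i\notin D$; since $\deg v_i=2$, both neighbours of $v_i$ must lie in $D$. I would then propagate around the cycle: a vertex in $D$ forces its cyclic neighbour out of $D$ (independence), and a vertex not in $D$ forces both its neighbours into $D$ ($2$-domination plus degree $2$). This makes the two statuses alternate around $C_n$, which is impossible on an odd cycle, as one full loop returns to the starting vertex with the opposite status. (If $k\ge 3$ there is nothing to prove, since a cycle has maximum degree $2<k$.)

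For (iii), write $G=\overline H$, the complement of $H$ w.r.t.\ $K_n$, with $H$ connected, triangle-free, and $|E(H)|\ge 2$ (so $|V(H)|\ge 3$). The key observation is that independent sets of $G$ are precisely the cliques of $H$, hence have size at most $2$ by triangle-freeness; thus $|D|\le 2$. If $|D|\le 1$ the $2$-domination condition fails for any vertex outside $D$ exactly as in (i). If $D=\{u,v\}$ with $uv\in E(H)$, then $2$-domination of any $w\notin\{u,v\}$ forces $w$ to be adjacent in $G$ to both $u$ and $v$, i.e.\ adjacent in $H$ to neither; ranging over all such $w$, this says $u$ and $v$ have no $H$-neighbour besides each other, so $\{u,v\}$ is a connected component of $H$ isomorphic to $K_2$ — contradicting that $H$ is connected with at least two edges.

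I expect case (iii) to be the only part demanding genuine thought: reinterpreting independent sets of the complement as cliques of $H$, capping their size via triangle-freeness, and then squeezing out the two-element possibility using \emph{both} connectedness and the edge count. Cases (i) and (ii) are short, the lone subtlety being the parity step that kills the odd cycle; no heavier machinery is needed anywhere.
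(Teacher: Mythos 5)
Your proof is correct and follows essentially the same route as the paper: (i) and (ii) by direct/parity checks, and (iii) by noting that independent sets of $G$ are cliques of $H$ (hence of size at most $2$) and that no such pair can be adjacent in $G$ to all remaining vertices, since that would force $\{u,v\}$ to be a $K_2$-component of the connected graph $H$ with at least two edges. You merely spell out the details the paper leaves as "straightforward to check."
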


\begin{proof}
It is straightforward to check the statement for (i) and (ii). If $G$ is the complement of a connected  triangle-free  graph, then an independent set consists of at most $2$ vertices in $G$, however no  pair of vertices are both adjacent to every other vertex in the graph.
\end{proof}

Hence the question naturally arises whether to contain (many) $k$-dominating sets is rather a rare property for $k>1$. 
Consider the Erd\H os-R\'enyi random graph $G_{n,p}$. Let $X_{t,1}$ denote the random variable which counts the number of maximal independent sets of size $t$ in $G_{n,p}$ and $X_{t,k}$ denote random variable which counts the number of $k$-DISes of size $t$ in $G_{n,p}$. \\
Following the idea of Bollob\'as and Erd\H os on maximal cliques \cite{EB}, one can easily calculate the expected value of $X_{t,1}$, $X_{t,2}$ or generally of $X_{t,k}$ as well. Note that the  expected value for $X_{t,1}$ is well known, we only add here for the purpose of comparison. 

\begin{proposition}\label{expect} $\mathbb{E}(X_{t,1})=\binom{n}{t}(1-p)^{\binom{t}{2}}\left(1-(1-p)^t\right)^{n-t},$ 

 \hspace{0.6cm} $\mathbb{E}(X_{t,2})=\binom{n}{t}(1-p)^{\binom{t}{2}}\left(1-(1-p)^t-tp(1-p)^{t-1}\right)^{n-t}$,

\hspace{0.6cm} $\mathbb{E}(X_{t,k})=\binom{n}{t}(1-p)^{\binom{t}{2}}\left(1-(1-p)^t-tp(1-p)^{t-1}- \cdots - \binom{t}{k-1}p^{k-1}(1-p)^{t-k+1}  \right)^{n-t}$.
\end{proposition}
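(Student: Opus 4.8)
The plan is to compute $\mathbb{E}(X_{t,k})$ by linearity of expectation, summing over all $t$-subsets $S$ of the vertex set the probability that $S$ is a $k$-DIS in $G_{n,p}$. By symmetry this probability is the same for every $S$, so $\mathbb{E}(X_{t,k}) = \binom{n}{t} \cdot \Pr[S \text{ is a } k\text{-DIS}]$ for a fixed $S$. The event that $S$ is a $k$-DIS factors into two independent parts because it concerns disjoint sets of potential edges: first, $S$ must be independent, which requires all $\binom{t}{2}$ pairs inside $S$ to be non-edges, contributing the factor $(1-p)^{\binom{t}{2}}$; second, every vertex $v \notin S$ must have at least $k$ neighbors in $S$, and these conditions for distinct $v$ involve disjoint edge slots (namely the $t$ pairs $\{v,s\}$ with $s\in S$), so the second part contributes $\big(\Pr[\,|N(v)\cap S|\geq k\,]\big)^{n-t}$.

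The remaining step is to evaluate $\Pr[\,|N(v)\cap S|\geq k\,]$ for a fixed $v\notin S$. The number of neighbors of $v$ in $S$ is a $\mathrm{Binomial}(t,p)$ random variable, so $\Pr[\,|N(v)\cap S|\geq k\,] = 1 - \sum_{i=0}^{k-1}\binom{t}{i}p^i(1-p)^{t-i}$. Writing this out gives $1 - (1-p)^t - tp(1-p)^{t-1} - \cdots - \binom{t}{k-1}p^{k-1}(1-p)^{t-k+1}$, which is exactly the base appearing in the claimed formula. Specializing to $k=1$ recovers $1-(1-p)^t$ and to $k=2$ recovers $1-(1-p)^t - tp(1-p)^{t-1}$, matching the first two displayed identities.

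There is no real obstacle here; the only thing to be careful about is the independence argument, i.e.\ verifying that the "independence of $S$" event and the "$k$-domination" events for the various external vertices all depend on pairwise-disjoint collections of edge-indicator variables, which is immediate once one lists the edge slots involved. I would state this explicitly as the one nontrivial point and then let the binomial tail computation finish the proof. For completeness one can note that the formulas for $k=1$ and $k=2$ are the announced special cases, and that the general expression is obtained by simply continuing the truncated binomial sum up to the term $i=k-1$.
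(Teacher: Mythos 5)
Your proof is correct and is exactly the standard computation the paper has in mind: the paper itself gives no written proof (it only remarks that the formula follows easily from the Bollob\'as--Erd\H{o}s idea for maximal cliques), and your argument --- linearity of expectation, factoring the event over pairwise-disjoint edge slots, and the $\mathrm{Binomial}(t,p)$ tail $1-\sum_{i=0}^{k-1}\binom{t}{i}p^i(1-p)^{t-i}$ --- is precisely that intended calculation. No gaps.
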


\begin{corollary} \label{kov}
Let $p=1/2$. 

If $t<\log_2{n}-2\log\log n$ or $t >2\log_2{n}$, then $\mathbb{E}(X_{t,1})<1/n$  and  so $\mathbb{E}(X_{t,k})<1/n$ for all $k$.

If $t=c\log_{2}n$ with constant $1<c<2$, then 
 $\mathbb{E}(X_{t,k})= n^{\Omega( c(1-\frac{c}{2})\log_{2}n)}$ for all $k$.
\end{corollary}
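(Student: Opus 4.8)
The strategy is to plug $p=1/2$ into the formulas of Proposition \ref{expect} and estimate the two factors $\binom{n}{t}2^{-\binom{t}{2}}$ and the ``non-domination'' term raised to the power $n-t$. For the first claim I would note that the monotone-in-$k$ structure of the formulas already gives $\mathbb{E}(X_{t,k})\le \mathbb{E}(X_{t,1})$ for every $k$, since each bracket $\bigl(1-(1-p)^t-\cdots-\binom{t}{k-1}p^{k-1}(1-p)^{t-k+1}\bigr)$ is at most $\bigl(1-(1-p)^t\bigr)$ and all bases lie in $[0,1]$; hence it suffices to bound $\mathbb{E}(X_{t,1})$. So the whole corollary reduces to the classical estimate of $\mathbb{E}(X_{t,1})=\binom{n}{t}2^{-\binom{t}{2}}\bigl(1-2^{-t}\bigr)^{n-t}$, exactly as in Bollob\'as--Erd\H os \cite{EB}.

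For the upper tail I would split into the two ranges. When $t$ is large, $t>2\log_2 n$, the factor $2^{-\binom{t}{2}}$ already dominates: $\binom{n}{t}\le n^t=2^{t\log_2 n}<2^{t\cdot t/2}$, so $\binom{n}{t}2^{-\binom{t}{2}}=\binom{n}{t}2^{-t(t-1)/2}\le 2^{t\log_2 n - t(t-1)/2}\to 0$ superpolynomially; the domination factor is $\le 1$ and harmless, so $\mathbb{E}(X_{t,1})<1/n$ for $n$ large. When $t$ is small, $t<\log_2 n - 2\log\log n$, the key point is that then $2^{-t}$ is a polynomially large fraction, more precisely $2^{-t}>(\log n)^2/n$, so $\bigl(1-2^{-t}\bigr)^{n-t}\le \exp\bigl(-(n-t)2^{-t}\bigr)\le \exp(-\tfrac12(\log n)^2)$ for $n$ large, which beats the at most $2^{t\log_2 n}\le 2^{(\log_2 n)^2}$ growth of $\binom{n}{t}2^{-\binom{t}{2}}\le \binom{n}{t}\le n^t$. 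Hence again $\mathbb{E}(X_{t,1})<1/n$. (I would carry enough constants to absorb the $2\log\log n$ slack; this is where a little care is needed.)

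For the middle range $t=c\log_2 n$ with $1<c<2$ fixed, I want a lower bound on $\mathbb{E}(X_{t,k})$, so I cannot simply throw away the bracket term. Here I use that for $t=c\log_2 n$ one has $2^{-t}=n^{-c}=o(1)$, so $(1-2^{-t}-\cdots)^{n-t}$ is bounded below: the sum $2^{-t}+t\cdot 2^{-t}\cdot\tfrac12+\cdots+\binom{t}{k-1}2^{-t}$ is at most $t^{k-1}2^{-t}=O\bigl((\log n)^{k-1}n^{-c}\bigr)=o(1)$, whence $(1-\cdots)^{n-t}\ge \exp\bigl(-O(n^{1-c}(\log n)^{k-1})\bigr)=n^{o(1)}$ since $c>1$. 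The main term is then $\binom{n}{t}2^{-\binom{t}{2}}$; taking logs base $2$, $\log_2\binom{n}{t}=t\log_2 n - t\log_2 t + O(t)=c\log_2 n\cdot\log_2 n(1+o(1))$ while $\binom{t}{2}=\tfrac{c^2}{2}(\log_2 n)^2(1+o(1))$, so $\log_2\bigl(\binom{n}{t}2^{-\binom{t}{2}}\bigr)=\bigl(c-\tfrac{c^2}{2}\bigr)(\log_2 n)^2(1+o(1))$, which is $\Omega\bigl(c(1-\tfrac{c}{2})(\log_2 n)^2\bigr)$ and positive because $0<c<2$. This gives $\mathbb{E}(X_{t,k})=n^{\Omega(c(1-c/2)\log_2 n)}$.

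The one genuine obstacle is bookkeeping in the small-$t$ regime: one must check that the exponential decay $\exp(-(n-t)2^{-t})$ really dominates the binomial growth across the whole window $t<\log_2 n - 2\log\log n$, and in particular near the upper end of that window where $2^{-t}$ is only $\Theta((\log n)^2/n)$; this is precisely why the threshold is $\log_2 n - 2\log\log n$ rather than $\log_2 n$, and getting the constant in the $\log\log$ term right is the delicate step. Everything else is Stirling's formula and the inequality $1-x\le e^{-x}$.
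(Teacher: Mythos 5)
The paper gives no proof of this corollary at all --- it is asserted as an immediate consequence of Proposition \ref{expect} in the spirit of Bollob\'as--Erd\H{o}s \cite{EB} --- so there is nothing to compare against except the intended standard computation, and your plan is exactly that computation. Your reduction $\mathbb{E}(X_{t,k})\le\mathbb{E}(X_{t,1})$ is correct (the bracket equals $\Pr[\mathrm{Bin}(t,1/2)\ge k]$, which is nonnegative and decreasing in $k$), and is clearly what the paper means by ``and so $\mathbb{E}(X_{t,k})<1/n$ for all $k$''; the middle-range lower bound, where you keep the bracket and show it contributes only $n^{o(1)}$ because $t^{k-1}2^{-t}=O((\log n)^{k-1}n^{-c})$ with $c>1$, is also right and gives $\log_2\mathbb{E}(X_{t,k})=(c-\tfrac{c^2}{2})(\log_2 n)^2(1+o(1))$ as required.

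One step as written does fail, though it is easily repaired. In the range $t>2\log_2 n$ you bound $\binom{n}{t}\le n^t$ and conclude that $2^{t\log_2 n-t(t-1)/2}$ tends to $0$ superpolynomially; but for $2\log_2 n<t<2\log_2 n+1$ this exponent equals $-t(s-1)/2$ with $0<s<1$, i.e.\ it is \emph{positive} and of order $t/2\approx\log_2 n$, so your estimate only gives $\binom{n}{t}2^{-\binom{t}{2}}\lesssim n$, not $<1/n$. You need the sharper bound $\binom{n}{t}\le(en/t)^t$, whose extra factor $2^{-t\log_2 t+t\log_2 e}$ with $\log_2 t\ge\log_2\log_2 n+1\to\infty$ restores superpolynomial decay uniformly over the whole range $t>2\log_2 n$. (The same refinement makes the small-$t$ case comfortable: there $\binom{n}{t}2^{-\binom{t}{2}}\le 2^{(\log_2 n)^2}$ while $(1-2^{-t})^{n-t}\le\exp(-(1-o(1))(\log_2 n)^2)=2^{-(1-o(1))(\log_2 n)^2/\ln 2}$, and $1/\ln 2>1$ already suffices, provided one reads $\log\log n$ in base $2$ so that $2^{-t}>(\log_2 n)^2/n$; with natural logs inside, the threshold as stated would need the $(en/t)^t$ bound as well.) With that single substitution your argument is complete.
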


Next, we present an easy constructive method to gain graphs with $k$-DISes.

\begin{construction}\label{konst}
Let $\Sigma=\{S_{t_i}\}$ be a set of disjoint stars with centers $v_i$ such that $t_i\geq k$.
Then the following two operations are allowed:
\begin{itemize}
\item[i] Identification of $x\in N(v_i)$ and $y\in N(v_j)$ in the $i$th and $j$th star for a pair $(i,j)$, 
\item[ii] Addition of an edge between $v_i$ and $v_j$ for a pair $(i,j)$.

\end{itemize}
\end{construction}

\begin{claim} 
Every graph $G$ which contains a $k$-dominating independent set can be  obtained by 
 Construction \ref{konst}. 
\end{claim}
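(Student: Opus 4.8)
The plan is to show that the two operations in Construction~\ref{konst} suffice to build any graph carrying a $k$-DIS, by reading off the construction directly from such a set. Suppose $G$ has a $k$-dominating independent set $W$. Since $W$ is independent, $V(G)\setminus W$ is a vertex cover, and since $W$ is $k$-dominating, every vertex $u\in V(G)\setminus W$ has $|N(u)\cap W|\ge k$; in particular $W\neq\emptyset$ and, because each $u\notin W$ has at least $k\ge 1$ neighbours in $W$ while no edge lies inside $W$, every vertex of $W$ has degree at least... well, one must be careful here, so the first step is to handle isolated vertices of $W$ separately (they contribute to no star), or alternatively to observe we may assume $G$ has no isolated vertices by attaching each to some star's leaf — but cleaner is simply to allow the degenerate case and note isolated vertices of $W$ cannot exist unless $V(G)=W$, which is itself obtainable (as a disjoint union of trivial stars after deleting edges, if $k$ allowed $t_i\geq k$ with... ) — I would in fact restrict attention to the interesting case and remark on trivialities.

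The core construction is as follows. For each $u\in V(G)\setminus W$, fix a set $A_u\subseteq N(u)\cap W$ of exactly $k$ of its $W$-neighbours (this is where $|N(u)\cap W|\ge k$ is used). Now build, for each $w\in W$, a star $S_{t_w}$ with centre $w$ and one leaf $\ell_{u,w}$ for each $u\notin W$ with $w\in A_u$; thus $t_w=|\{u\notin W: w\in A_u\}|$. We need $t_w\ge k$ for Construction~\ref{konst} to apply, which need not hold, so the first genuine fix is to pad: add $\max(0,k-t_w)$ extra leaves to each such star, and then after performing the identifications below, delete these auxiliary leaves (deletion of leaves is not one of the listed operations, so instead I would absorb them by identifying them pairwise across stars in a way that produces no new vertices of $G$, or — simplest — I would re-examine whether the claim intends $t_i \ge k$ as a genuine constraint; if it does, the padding leaves must be dealt with, and the honest route is to identify all surplus leaves of all stars into a small number of shared vertices that then become... this is the delicate bookkeeping). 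Taking this family $\Sigma=\{S_{t_w}\}_{w\in W}$ of disjoint stars, apply operation~(i) to identify, for each $u\notin W$, all the $k$ leaves $\{\ell_{u,w}: w\in A_u\}$ into a single vertex — call it $u$ again; a sequence of pairwise identifications achieves this. After these identifications the vertex set is exactly $W\cup (V(G)\setminus W)=V(G)$, and the edges present are precisely $\{uw: u\notin W,\ w\in A_u\}$. Finally apply operation~(ii) to add every edge of $G$ not yet present: these are (a) edges between two vertices of $V(G)\setminus W$, which are allowed since both endpoints are star centres after... no — they are leaves, not centres.

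Here is the main obstacle, and I would foreground it: operation~(ii) only adds edges between \emph{centres} $v_i,v_j$, i.e.\ between two vertices of $W$, but $W$ is independent so $G$ has no such edges to add; meanwhile the edges of $G$ we still must insert are exactly those with at least one endpoint in $V(G)\setminus W$ and not of the form $uw$ with $w\in A_u$ — namely edges $uw$ with $u\notin W$, $w\in (N(u)\cap W)\setminus A_u$, and edges $uu'$ with $u,u'\notin W$. Neither type is directly addable by~(ii). The resolution must be that these extra endpoints are themselves realised as centres: concretely, one should build a star centred at \emph{every} vertex of $G$, centre $v=v$, whose leaves encode the $k$-subsets witnessing domination of $v$'s non-$W$... no. The correct reading, which I would verify carefully, is: take one star $S_{t_v}$ for each $v\in V(G)$ with $t_v:=|N(v)\setminus W|+$ (a term making $t_v\ge k$), use~(i) to glue leaf-to-leaf along each edge of $G$ incident to $V(G)\setminus W$ so that such an edge $vu$ becomes the identified leaf shared by the stars at $v$ and at $u$, and use~(ii) only for edges inside... but again $W$ is independent. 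So in fact every edge of $G$ has an endpoint outside $W$, hence \emph{all} edges of $G$ arise from identifications in~(i), and operation~(ii) is needed only to reconnect components or is not needed at all; the role of~(ii) is presumably to allow the $v_i$'s (which in the final graph are arbitrary vertices, including non-$W$ ones once we take a star per vertex) to be joined. I expect the clean statement is: stars $\{S_{t_v}\}_{v\in V}$, operation~(i) along edges not inside $W$ (each such edge identified as a shared leaf, with $t_v$ chosen $\ge k$ by construction since every $v$ either is in $W$ with $\ge$ enough neighbours, or is outside $W$ with $\ge k$ neighbours in $W$), and operation~(ii) along the remaining edges — of which there are none inside $W$, so~(ii) is vacuous here but listed for generality. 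The one real thing to check is the degree/size condition $t_v\ge k$ for \emph{every} vertex, and this is exactly the content of "$k$-dominating" for vertices outside $W$ together with a suitable accounting (e.g.\ deleting, or never creating, low-degree stars) for vertices inside $W$ — that accounting is the step I expect to require the most care.
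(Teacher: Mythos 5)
Your proposal centers the stars at the vertices of the $k$-DIS $W$, and this is the wrong side: it is the source of both obstacles you run into and never resolve. With centers in $W$ you cannot guarantee $t_w\ge k$ (a vertex of the independent set may have small degree), and operation (ii) --- which only joins \emph{centers} --- becomes useless, since the only center--center edges it could add would lie inside $W$, which is independent, while the edges of $G$ inside $V(G)\setminus W$ remain unreachable by either operation. You correctly diagnose this dead end in your final paragraph, but then drift toward ``one star per vertex'' and conclude that operation (ii) is vacuous; that is not a resolution, and the padding-and-deleting of auxiliary leaves you contemplate earlier is not among the allowed operations.

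The intended (and very short) argument is the mirror image of your setup: let $D$ be the $k$-DIS and take the \emph{centers} to be the vertices of $D':=V(G)\setminus D$. Delete the edges of $G$ induced on $D'$. In the remaining graph every $v\in D'$ has all of its neighbours in $D$ and, by $k$-domination, at least $k$ of them, so $t_v\ge k$ holds automatically; moreover $N(D')\subseteq D$ is independent, so the remaining graph is exactly a disjoint union of stars $\{S_{t_v}\}_{v\in D'}$ in which leaves corresponding to the same vertex of $D$ (a vertex of $D$ adjacent to several vertices of $D'$) have been identified, i.e.\ it is produced by operation (i). The deleted edges all join two centers, so they are restored by operation (ii). Every edge of $G$ is thereby accounted for, because independence of $D$ means each edge either joins $D$ to $D'$ (it lies in a star, possibly after identification) or lies inside $D'$ (it is added by (ii)). Once the roles of the two sides are swapped, the size condition $t_i\ge k$ and the reach of operation (ii) are precisely what $k$-domination and independence supply.
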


\begin{proof} Consider a $k$-dominating independent set $D$ in $G$, and let $D':=V(G)\setminus D$. Delete the edges of $G\mid_{D'}$. In the resulting graph, the degree $d(v)$ of every $v\in D'$ is at least $k$, while $N(D')=D$ is an independent set. The claim thus follows.
\end{proof}

For the family of  trees, we saw in Section 2. that the number of $1$-DISes can be exponential in the number of vertices via Theorem \ref{tree}.
 If $k>1$, the situation is completely different from the case $k=1$. Confirming  an extended version of a  conjecture of Pawe{\l} Bednarz \cite{pawel} on $k$-DISes of trees, we can formulate the following

\begin{theorem}\label{fa} Let $k>1$.
If $G$ is a tree (or forest) and there exists a $k$-dominating independent set in $G$, then it is unique. That is, $\mi_k(n, \mathcal{T})=1$.
\end{theorem}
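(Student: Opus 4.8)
The plan is to argue by induction on the number of vertices, using leaves to peel the tree down. Suppose $G$ is a forest with a $k$-dominating independent set $D$, with $k \geq 2$; I want to show $D$ is forced. Since $G$ has a leaf (or is edgeless, giving the trivial base case $D = V(G)$), pick a leaf $u$ with unique neighbor $v$. If $u \notin D$, then $u$ must have at least $k$ neighbors in $D$; but $u$ has only one neighbor, contradiction since $k \geq 2$. Hence \emph{every leaf lies in $D$}, and this is forced. Consequently the neighbor $v$ of any leaf satisfies $v \notin D$ (independence), so $v$ is forced out as well. This already shows $D$ is determined on leaves and their neighbors; the task is to propagate this.

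The key step I would carry out is to delete a carefully chosen leaf and its neighbor and reduce to a smaller forest. Concretely, take a leaf $u$ adjacent to $v$; we know $u \in D$ and $v \notin D$. Form $G' := G - v$ (removing $v$ and all its incident edges, hence also $u$ and possibly other leaves that hung off $v$). Then $G'$ is again a forest, and I claim $D' := D \cap V(G')$ is a $k$-dominating independent set of $G'$: independence is inherited, and for any $w \in V(G') \setminus D'$ the neighbors of $w$ in $D'$ are exactly its neighbors in $D$ minus possibly $v$ --- but $v \notin D$, so no $D$-neighbor of $w$ was removed, giving $|N_{G'}(w) \cap D'| = |N_G(w) \cap D| \geq k$. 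Here I must be a little careful: removing $v$ might strand a vertex $w$ that was in $V(G)\setminus D$ and adjacent to $v$; but since $v \notin D$, losing the edge to $v$ does not decrease $|N(w)\cap D|$, so the $k$-domination condition survives. By induction $D'$ is the unique $k$-DIS of $G'$. Any $k$-DIS $\widetilde D$ of $G$ must contain $u$ and exclude $v$ (forced as above), so $\widetilde D \setminus \{u\} = \widetilde D \cap V(G')$ is a $k$-DIS of $G'$ by the same restriction argument, hence equals $D'$, hence $\widetilde D = D' \cup \{u\} = D$. This completes the induction, and since a $k$-DIS (when it exists) is unique, $\mi_k(n,\mathcal T) = 1$.

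The main obstacle, and the place to be most careful, is the reduction step: one must verify that deleting $v$ (a non-$D$ vertex adjacent to a leaf) genuinely yields a smaller forest on which the restricted set is still a valid $k$-DIS, and that no new vertex of $V(G') \setminus D'$ violates $k$-domination. The crucial observation making this clean is that we only ever delete vertices \emph{outside} $D$, so the $D$-degree of every surviving vertex is unchanged; the only thing that could go wrong is deleting a vertex of $D$, which we never do. A secondary subtlety is the base case and making sure the induction covers forests, not just trees --- but since the class of forests is closed under vertex deletion this is automatic, and the edgeless forest forces $D = V(G)$. One should also note the hypothesis $k \geq 2$ is used exactly once but essentially: it is what forces every leaf into $D$, which is the engine of the whole argument (for $k = 1$ a leaf may or may not be in a maximal independent set, and indeed Theorem \ref{tree} shows uniqueness fails badly there).
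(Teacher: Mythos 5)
Your proof is correct and follows essentially the same route as the paper: both arguments force every leaf into $D$ (using $k\geq 2$) and every leaf-neighbor out of $D$, then reduce to a smaller forest by deleting only non-$D$ vertices (the paper strips all leaves and their neighbors at once via a minimal counterexample, you peel one leaf-neighbor at a time by induction). The only blemish is a notational slip over whether $u$ belongs to $G' = G - v$ (it does, as an isolated vertex forced into $D'$), but either reading of your reduction yields a valid argument.
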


\begin{proof} Assume to the contrary that there exists  a forest $T$ with (at least) two different $k$-dominating sets $D_1$ and $D_2$, moreover $T$ is a minimal counterexample regarding the number of vertices and edges. We introduce the notions $L_T$ for the set of leaves in $T$, $Q_T:=N(L_T)$ the neighbors of the leaves and $R_T:=V\setminus (L_T \cup Q_T)$ the rest of the vertices. The minimality condition immediately implies 
that $T$ is a tree.  Furthermore $L_T \subseteq D_i$ from the $k$-domination and $Q_T \cap D_i = \emptyset$ from the independence of the sets $D_i$. 
Consequently, the graph spanned by $R_T$ has at least two different $k$-dominating sets and they can be extended in the same way to $Q_T$ and $L_T$, a contradiction.

Finally, observe that the leaves of a star $S_{k+1}$ form a $k$-dominating independent set in $S_k$.
\end{proof}

An alternative way to see this is a consequence of the following simple Algorithm \ref{alga}, which either finds the unique $k$-dominating set in the tree, or proves that there does not exist any.

\begin{algo}\label{alga} Let $D$ and $D'$ be empty sets in the beginning.

$\bullet$ Put all the isolated vertices to $D$. Cluster the vertices of the forest $T$  to $L_T, Q_T, R_T$ as in the proof of Theorem \ref{fa}.\\ $\bullet$ If $|Q_T|=0$ but $|L_T|>1$, stop with the answer 'no $k$-dominating independent set'.\\ $\bullet$ If $|Q_T|=0$ and $|L_T|=1$,  put $w\in L_T$ to $D$ and stop with the answer '$D$ is the $k$-dominating independent set'.\\ $\bullet$ If $|Q_T|=|L_T|=0$, stop with the answer '$D$ is the $k$-dominating independent set'.\\ $\bullet$ Else choose a vertex $q$ from $Q_T$ which has at most $1$ neighbor from $R_T \cup Q_T$. Note that such a vertex clearly exists since the graph $G \setminus L_T$ is a tree, whose leaf set is a subset of $Q_T$.\\ $\bullet$ $\bullet$ If $|N(q)\cap L_T|\geq k$, then
put $q$ to $D'$ and the vertices of $N(q)\cap L_T$ to $D$, finally delete \mbox{$\{q\}\cup (N(q)\cap L_T)$} from $T$.\\
  $\bullet$ $\bullet$ If $|N(q)\cap L_T|=k-1$ and $|N(q)\cap(R_T \cup Q_T)|=1$, then put $q$ to $D'$ and the vertices of $N(q)\cap L_T$ to $D$, delete \mbox{$\{q\}\cup (N(q)\cap L_T)$} from $T$, and separate the edges which are adjacent to the vertex $N(q)\cap(R_T \cup Q_T)$ in the remaining graph with copies of $N(q)\cap(R_T \cup Q_T)$ as endvertices.\\
 $\bullet$ $\bullet$ Else, stop with the answer 'no $k$-dominating independent set'.\\ Iterate.
\end{algo}

It is easy to see that if a vertex is duplicated, then the copies will be leaves in the remaining graph hence the vertex will be part of $D$ if there exists a suitable $k$-dominating independent set.
It is also clear that $D$ will be an independent set throughout the algorithm. At the same time every vertex in $D'$ will have at least $k$ neighbors from $D$. Indeed, if a vertex $q$ is put into $D'$ when $|N(q)\cap L_T|=k-1$, then it is guaranteed that its last neighbor will be in $D$ as well. Thus $D$ will be a $k$-dominating independent set. 
Finally, the algorithm stops with 'no' answer exactly when there is an evidence for the non-existence.


\medskip

\section{$k$-DISes --- constructions and lower bounds }

In this section we prove the lower bound of Theorem \ref{fo1} and Theorem \ref{fo2} by showing suitable graphs. 
 Let $G$ be a complete bipartite graph of equal cluster size, or a Tur\'an graph $T_{p^2,p}$ on $p^2$ vertices and $p$ equal partition classes.

\begin{proposition}\label{constr}  $\zeta_k(K_{t,t})=\sqrt[2t]{2}$ if $k\leq t$. 

$\zeta_k(T_{p^2,p})=\sqrt[p^2]{p}$ if $k\leq p$.
\end{proposition}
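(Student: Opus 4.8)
The plan is to count $k$-dominating independent sets in $K_{t,t}$ directly, and then observe that the Turán graph argument is essentially identical. Write the two colour classes of $K_{t,t}$ as $A$ and $B$, each of size $t$. First I would argue that a subset $D \subseteq A \cup B$ that is independent must lie entirely inside $A$ or entirely inside $B$ (it cannot meet both, since any vertex of $A$ is adjacent to any vertex of $B$), with the lone exception $D = \emptyset$, which is never $k$-dominating for $k \geq 1$ when $t \geq 1$. So every $k$-DIS is a nonempty subset of one fixed class, say $D \subseteq A$. The domination condition only concerns vertices outside $D$: vertices of $B$ are each adjacent to all of $A \supseteq D$, so they see exactly $|D|$ elements of $D$; vertices of $A \setminus D$ have no neighbours in $A$, hence see $0$ elements of $D$. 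Therefore $D$ is $k$-dominating if and only if $|D| \geq k$ (handling $B$) \emph{and} $A \setminus D = \emptyset$, i.e. $D = A$.

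Thus, provided $k \leq t$, the only $k$-DISes are $D = A$ and $D = B$, giving $\mathrm{mi}_k(K_{t,t}) = 2$, and hence $\zeta_k(K_{t,t}) = \sqrt[2t]{2}$ since $K_{t,t}$ has $2t$ vertices. (If $k > t$ there are none, which is why the hypothesis $k \leq t$ is needed.) For the Turán graph $T_{p^2,p} = K_{p,\dots,p}$ with $p$ classes of size $p$, the same reasoning applies: an independent set is contained in a single class $C_i$, a vertex outside $D$ in another class $C_j$ sees all $|D|$ vertices of $D$, and a vertex in $C_i \setminus D$ sees none of $D$; so $D$ is $k$-dominating iff $D$ is an entire class (forcing $C_i \setminus D = \emptyset$) and $|D| = p \geq k$. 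Hence there are exactly $p$ such sets when $k \leq p$, and $\zeta_k(T_{p^2,p}) = \sqrt[p^2]{p}$.

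There is no real obstacle here — the whole content is the observation that in a complete multipartite graph, a $k$-dominating independent set is forced to be one full colour class (so the "many subsets" one might hope for collapse to just the classes themselves), and the only thing to be careful about is the degenerate cases ($D = \emptyset$, and the regime $k > t$ or $k > p$ where no $k$-DIS exists), together with checking that the size hypothesis $k \leq t$ (resp. $k \leq p$) is exactly what guarantees each full class actually is $k$-dominating. The mild subtlety worth stating explicitly is why independence forces $D$ into a single class and why $k$-domination then forces $D$ to be \emph{all} of that class rather than a proper subset — both follow immediately from the fact that within a class there are no edges, so any vertex left outside $D$ in its own class is dominated $0$ times.
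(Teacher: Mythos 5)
Your proof is correct. The paper actually states Proposition \ref{constr} without any proof, treating it as immediate; your argument --- independence forces $D$ into a single colour class, and the undominated vertices of that class then force $D$ to be the \emph{entire} class, which is $k$-dominating precisely when $k\leq t$ (resp.\ $k\leq p$) --- is exactly the verification the author leaves implicit, and it handles the degenerate cases ($D=\emptyset$, $k$ too large) correctly.
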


Putting together  the first statement of Proposition \ref{constr} with $k=t$  and Theorem \ref{alap0} we get the lower bound of Theorem \ref{fo2} : $\zeta_k(K_{k,k})=\sqrt[2k]{2} \leq \lim \zeta_k(n)$. \\
Note that for $k=3$, the Tur\'an graph provides better estimation from  Proposition \ref{constr}:
$\zeta_k(T_{3\cdot 3,3})=\sqrt[9]{3} \leq \lim \zeta_3(n)$. Here $\sqrt[9]{3}\approx 1.13$ while the bipartite graph $K_{3,3}$ would yield only $\sqrt[6]{2}\approx 1.122$.
For $k=4$, $\zeta_4(T_{4\cdot 4,4})= \zeta_4(K_{4,4})$.
\medskip

\noindent Kneser graphs also provide many $k$-DISes:

\begin{proposition}\label{Kneser} Let $G=KN(n,t)$ denote the Kneser graph whose vertices correspond to the $t$-element subsets of a set of $n$ elements, and where two vertices are adjacent if and only if the two corresponding sets are disjoint. Suppose $t< n/2$. Then $G$  contains $n$ $k$-DISes for $k=\binom{n-t-1}{t-1}$.
\end{proposition}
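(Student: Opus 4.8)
The plan is to exhibit the $n$ required $k$-DISes explicitly as the \emph{stars} of the Kneser graph. For each element $i\in[n]$, let
$$D_i:=\Big\{A\in\tbinom{[n]}{t}\ :\ i\in A\Big\}$$
be the family of all $t$-subsets through $i$ (the classical Erd\H{o}s--Ko--Rado family, of size $\binom{n-1}{t-1}$). First I would record that $D_i$ is independent in $G$: any two members of $D_i$ both contain $i$, hence are not disjoint, hence non-adjacent in $KN(n,t)$. I would also note that the $D_i$ are pairwise distinct: for $i\neq j$, since $n\geq t+1$ one can pick a $t$-set containing $i$ but not $j$, so $D_i\neq D_j$. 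Thus we really have $n$ different independent sets in hand, and it remains to show each is $k$-dominating for $k=\binom{n-t-1}{t-1}$.

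The heart of the matter is a one-line count, done uniformly for every external vertex. Fix $i$ and take any $B\notin D_i$, i.e.\ a $t$-subset with $i\notin B$. A neighbor of $B$ lying in $D_i$ is a $t$-set $A$ with $i\in A$ and $A\cap B=\emptyset$; writing $A=\{i\}\cup A'$, this is the same as choosing a $(t-1)$-subset $A'$ of $[n]\setminus(B\cup\{i\})$. Since $|[n]\setminus(B\cup\{i\})|=n-t-1$, the number of such $A$ is exactly $\binom{n-t-1}{t-1}$. Hence every vertex outside $D_i$ has precisely $k:=\binom{n-t-1}{t-1}$ neighbors inside $D_i$, so $D_i$ is a $k$-dominating independent set; letting $i$ range over $[n]$ gives the asserted $n$ $k$-DISes.

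Finally I would point out that the hypothesis $t<n/2$ is precisely what keeps this non-degenerate: it guarantees $n-t-1\geq t-1$, so $k=\binom{n-t-1}{t-1}\geq 1$ (indeed $\geq t$), and it ensures $KN(n,t)$ has edges at all, so that the $k$-domination condition is genuinely satisfied and each $D_i$ is in fact maximal. There is essentially no obstacle in this argument; the only place to be slightly careful is bookkeeping the element $i$ correctly — comparing $[n]\setminus B$ (the neighborhood of $B$ in the whole graph) with $[n]\setminus(B\cup\{i\})$ (what is relevant for neighbors inside $D_i$) — and checking the edge case that $D_i$ is maximal, which is immediate once $k\geq 1$. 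One may also remark that the count $\binom{n-t-1}{t-1}$ is independent of the chosen external vertex $B$, which is exactly what allows all $n$ stars to serve simultaneously as $k$-DISes for one and the same value of $k$.
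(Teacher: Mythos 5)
Your proof is correct and follows essentially the same route as the paper: both exhibit the $n$ stars $D_i$ and observe that a $t$-set avoiding $i$ is disjoint from exactly $\binom{n-t-1}{t-1}$ of the $t$-sets through $i$. The only cosmetic difference is that the paper invokes the Erd\H{o}s--Ko--Rado theorem to identify the stars as the maximum independent sets, whereas you verify their independence directly, which is all the statement actually requires.
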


\begin{proof}
Clearly the largest independent set in $G$ is of size $\binom{n-1}{t-1}$ according to the theorem of Erd\H os, Ko and Rado \cite{EKR}, and the corresponding $t$-element subsets are those which contain a fixed element $i\in \{1,2,\ldots, n\}$. Thus the proposition indeed follows since a $t$-element subset which does not contain $i$ are disjoint to exactly $k=\binom{n-t-1}{t-1}$   $t$-subsets which contain $i$, while
 less vertices in a maximal independent set do not provide enough edges to $k$-dominate the rest of the vertices.
\end{proof}

Now we turn our attention to the case $k=2$.

\begin{claim}
 $\mi_2(3)=1$, $\mi_2(4)=2$, $\mi_2(5)=2$, $\mi_2(6)=3$, $\mi_2(7)=3$, $\mi_2(8)=4$, $\mi_2(9)=6$, $\mi_2(16)\geq 24$.
\end{claim}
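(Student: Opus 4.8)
The plan is to establish the small values $\mi_2(n)$ for $n \le 9$ and the bound $\mi_2(16)\ge 24$ by a combination of explicit constructions (for the lower bounds) and a structural case analysis (for the matching upper bounds). For the lower bounds, I would exhibit a single graph in each case: $K_{2,2}$ gives $\mi_2(4)\ge 2$ by Proposition \ref{constr}, $K_{3,3}$ gives $\mi_2(6)\ge 3$, $K_{4,4}$ gives $\mi_2(8)\ge 4$, and for $n=9$ the Tur\'an graph $T_{9,3}$ gives $\mi_2(9)\ge 3^{9/9}\cdot$... wait, more precisely $\mi_2(T_{9,3})=3$ is too small, so instead I would use $K_{3,3}+K_3$? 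No — the cleaner route is to note $\mi_2(9)\ge \mi_2(K_{3,3})\cdot\mi_2(?)$ fails on $9$ vertices; the right construction is the graph on $9$ vertices achieving $6$, which I expect comes from a small ad hoc graph (e.g.\ $K_{3,3}$ with one extra vertex joined appropriately, or $K_{2,2}+K_{2,2}+K_1$ giving $2\cdot2=4$, still short). So the genuinely delicate lower bound is $\mi_2(9)=6$, and I would search among bipartite-like graphs on $9$ vertices — the strongest candidate being a graph whose $2$-DISes correspond to lines of a small combinatorial configuration. For $\mi_2(16)\ge 24$ I would use the product/lattice construction $K_4\square K_4 = L(4)$ (the Rook graph on $16$ vertices), whose $2$-DISes correspond to the transversals / MDS codes counted later in the paper, giving at least $24$; the odd values $\mi_2(5)=\mi_2(7)=3$ and $\mi_2(3)=1$ follow by padding smaller extremal graphs with isolated vertices via Observation \ref{obsit}, since isolated vertices must lie in every $k$-DIS and do not change the count.

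For the upper bounds I would argue by hand for each small $n$. The base observations are: every isolated vertex belongs to every $2$-DIS; a $2$-DIS must be a maximal independent set whose complement is $2$-dominated, so in particular every vertex outside $D$ has degree at least $2$; and by the Claim following Construction \ref{konst}, after deleting edges inside $V\setminus D$ every non-$D$ vertex has at least $2$ neighbours in the independent set $D$. Using these, for small $n$ one can enumerate: for $n=3$ no graph can have two distinct $2$-DISes (a $2$-DIS needs a vertex of degree $\ge 2$ outside it, forcing essentially $P_3$ or $K_{1,2}$, which has a unique one), giving $\mi_2(3)=1$; for $n\in\{4,5\}$ one checks that two $2$-DISes $D_1\ne D_2$ would force enough edges that a third configuration is impossible, pinning the maximum at $2$; and so on up to $n=9$. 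The main obstacle will be the upper bound $\mi_2(9)\le 6$: unlike the trivial small cases, here one must rule out $7$ or more $2$-DISes on $9$ vertices, which requires a real counting argument — I would bound the number of maximal independent sets that can simultaneously be $2$-dominating by analysing how many vertices can serve as the "shared domination support," likely via a double-counting of (vertex outside $D$, pair of dominating neighbours in $D$) incidences, or by invoking the general upper-bound machinery of Section 5 specialized to $n=9$.

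I expect the lower-bound constructions for $n\le 8$ and $n=16$ to be routine once the right graphs ($K_{t,t}$, $T_{9,3}$-type graphs, $L(4)$) are named, and the upper bounds for $n\le 7$ to be short finite checks. The crux is the pair $\mi_2(9)=6$ and $\mi_2(16)\ge 24$: the former because the upper bound $6$ is not obviously forced, the latter because one must actually identify the $24$ relevant independent sets in $L(4)$ — which, as the abstract indicates, is exactly the point where the connection to MDS codes over $\mathbb{F}_4$ (equivalently, to the count of certain $4\times 4$ Latin-square-like transversal structures) enters. I would therefore defer the detailed verification of $\mi_2(16)\ge 24$ to the product-graph discussion later in Section 4, and here simply record it as a consequence of that construction, while giving the self-contained finite case analysis for $3 \le n \le 9$.
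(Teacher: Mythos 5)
There is a genuine gap in your lower-bound constructions. The graphs $K_{3,3}$ and $K_{4,4}$ do \emph{not} have $3$ and $4$ two-DISes: any independent set in $K_{t,t}$ lies inside one side of the bipartition, and the remaining vertices of that side would then have no neighbour in the set, so every $2$-DIS of $K_{t,t}$ is an entire side and $\mi_2(K_{t,t})=2$ for all $t\geq 2$ (this is exactly what Proposition \ref{constr} records). The graphs the paper actually uses for $n=6$ and $n=8$ are the complete multipartite graphs $K_{2,2,2}$ and $K_{2,2,2,2}$, whose $2$-DISes are precisely the parts (each part of size $2$ is independent and every outside vertex is adjacent to both of its elements), giving $3$ and $4$. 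More seriously, you never produce the $n=9$ construction, which you yourself identify as the crux: it is $K_3\square K_3$, the $3\times 3$ Rook graph, whose $2$-DISes are exactly the $3!=6$ transversals (permutation matrices), by the very same mechanism as the $K_4\square K_4=L(4)$ construction you correctly name for $n=16$ with its $4!=24$ transversals. A further slip: you assert $\mi_2(5)=3$, but padding $K_{2,2}$ (with an isolated vertex, or with the paper's operation of joining a new vertex to all existing vertices) gives $2$, matching the statement $\mi_2(5)=2$.

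On the upper bounds your position is comparable to the paper's: the paper likewise only asserts that extremality "can be shown by case analysis," so your sketch of a finite check, together with the correct observations that isolated vertices lie in every $2$-DIS and that vertices outside a $2$-DIS need degree at least $2$, is at the same level of detail. But as written the proposal fails to establish the claimed values for $n=6,8$ and especially $n=9$, so it does not prove the statement.
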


\begin{proof}
It is easy to check that the number of the $2$-DISes in $P_3$, $K_{2,2}$, $K_{2,2,2}$, $K_{2,2,2,2}$, $K_3\square K_3$ and $K_4\square K_4$  is $1; 2; 3; 4; 6$, and $24$, respectively.  It is also easy to check that joining a new vertex to a graph's every vertex does not increase or decrease the number of the $2$-DISes. Finally, it can be shown by  case analysis that these graphs are extremal indeed.
\end{proof}

Concerning $2$-DISes, product graphs seem to provide the best  lower bound, at least much better than those provided by Proposition \ref{constr}.

\begin{construction} \label{pelda1}
Let $n$ be large enough, and let

$$G_n=  \alpha K_3\square K_3 +\beta K_4\square K_4, \mbox{ \ with \ } \alpha, \beta \in \mathbb{N}, \beta\leq 8.$$ (Observe that $\alpha$ and $\beta$ is uniquely determined.)  

\end{construction}

In view of Observation \ref{obsit} this implies
 
\begin{proposition}\label{order}

$${\mi_2(n)}=  \Omega({6}^{n/9})  \ \mbox{ and hence } \sqrt[9]{6}\leq \lim\zeta_2(n).  $$
\end{proposition}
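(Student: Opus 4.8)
The plan is to combine the building blocks $K_3\square K_3$ and $K_4\square K_4$ with the multiplicativity of $\mi_2$ under disjoint union. First I would check that the graph $G_n$ of Construction \ref{pelda1} is well defined for every sufficiently large $n$: since $\gcd(9,16)=1$ and $\{16\beta \bmod 9 : 0\le \beta\le 8\}$ runs through all residues modulo $9$, there is a (unique) $\beta\in\{0,1,\dots,8\}$ with $16\beta\equiv n \pmod 9$, and then $\alpha=(n-16\beta)/9$ is a nonnegative integer once $n\ge 128$. Thus $G_n=\alpha K_3\square K_3+\beta K_4\square K_4$ is a genuine $n$-vertex graph.

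Next I would apply Observation \ref{obsit} repeatedly to the $\alpha+\beta$ components of $G_n$, obtaining
$$\mi_2(G_n)=\mi_2(K_3\square K_3)^{\alpha}\cdot \mi_2(K_4\square K_4)^{\beta}.$$
By the Claim preceding Construction \ref{pelda1}, $\mi_2(K_3\square K_3)=6$ and $\mi_2(K_4\square K_4)\ge 24\ge 1$, hence $\mi_2(G_n)\ge 6^{\alpha}$. Since $\alpha=(n-16\beta)/9\ge (n-128)/9$, this gives
$$\mi_2(n)\ge \mi_2(G_n)\ge 6^{(n-128)/9}=6^{-128/9}\cdot 6^{n/9},$$
that is, $\mi_2(n)=\Omega(6^{n/9})$.

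Finally, taking $n$-th roots yields $\zeta_2(n)=\sqrt[n]{\mi_2(n)}\ge 6^{-128/(9n)}\cdot \sqrt[9]{6}$, whose right-hand side tends to $\sqrt[9]{6}$ as $n\to\infty$. Because $\lim\zeta_2(n)$ exists by Theorem \ref{alap0}(iii), we conclude $\lim\zeta_2(n)\ge \sqrt[9]{6}$, as claimed.

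As for difficulty, there is essentially no obstacle once the Claim (in particular the evaluation $\mi_2(K_3\square K_3)=6$) and Observation \ref{obsit} are available; the only point requiring a moment's care is the elementary fact that every large $n$ can be written as $9\alpha+16\beta$ with $0\le\beta\le 8$, which is exactly what makes $G_n$ defined for all large $n$. The $K_4\square K_4$ summands contribute nothing to the asymptotics --- they merely absorb the residue of $n$ modulo $9$ slightly more efficiently than isolated vertices would --- so in fact $\lfloor n/9\rfloor$ copies of $K_3\square K_3$ plus isolated vertices already give $\mi_2(n)\ge 6^{\lfloor n/9\rfloor}$ and the same conclusion.
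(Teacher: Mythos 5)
Your proposal is correct and follows exactly the paper's route: the paper derives Proposition \ref{order} in one line from Construction \ref{pelda1}, Observation \ref{obsit}, and the value $\mi_2(K_3\square K_3)=6$ from the preceding Claim, and you have simply filled in the routine details (the representation $n=9\alpha+16\beta$ with $0\le\beta\le 8$ and the passage to the limit via Theorem \ref{alap0}). No discrepancies.
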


We conjecture that in fact $\sqrt[9]{6}= \lim\zeta_2(n)$ holds, moreover, the graphs listed in Construction \ref{pelda1} are extremal graphs, that is, if $n$ is large enough then 
$\mi_2(n)=\mi_2(G)$ holds for an $n$-vertex graph only if $G$ is a  graph from Construction \ref{pelda1}.

Concerning $k=3$, we conjecture that the Tur\'an graph $T_{3\cdot 3,3}$ provides the order of magnitude, as $\sqrt[9]{3}\leq\zeta_3(n)$. Finally, in general we conjecture that if $k$ is large enough, then $\zeta_k(n)=\zeta_k(K_{k,k})=\sqrt[2k]{2}$.

\begin{nota}
Let $G^*$ denote the graph constructed from $G$ by adding a new vertex to its vertex set and joining it to all of the vertices of $G$.
\end{nota} 

\noindent Applying the observation   $\mi_k(v(G), G) = \mi_k(v(G)+1, G^*)$, we have
 
\begin{corollary}
$${\mi_2(n, \mathcal{F}_{con})}=  \Omega({6}^{n/9}).$$
\end{corollary}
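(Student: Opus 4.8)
The plan is to bootstrap from Proposition~\ref{order}, which gives $\mi_2(n)=\Omega(6^{n/9})$ witnessed by the disconnected graph $G_n = \alpha K_3\square K_3 + \beta K_4\square K_4$, and to upgrade this to a \emph{connected} extremal construction using the $G \mapsto G^*$ operation together with the observation $\mi_k(v(G),G) = \mi_k(v(G)+1, G^*)$ stated just before the corollary.

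First I would take the graph $G_n$ from Construction~\ref{pelda1}, which has $n$ vertices and satisfies $\mi_2(G_n) = 6^\alpha \cdot 24^\beta = \Omega(6^{n/9})$ by Observation~\ref{obsit} and the bounds on $\beta$. Then I would form $G_n^*$ by adjoining a single universal vertex. The key point is that $G_n^*$ is connected: every pair of vertices is joined through the new apex vertex. By the quoted identity $\mi_2(v(G_n),G_n) = \mi_2(v(G_n)+1, G_n^*)$, we get $\mi_2(G_n^*) = \mi_2(G_n) = \Omega(6^{n/9})$, while $G_n^*$ has $n+1$ vertices. Re-indexing $m = n+1$, this produces, for every sufficiently large $m$, a connected $m$-vertex graph with $\Omega(6^{(m-1)/9}) = \Omega(6^{m/9})$ many $2$-DISes, which is exactly $\mi_2(m,\mathcal{F}_{con}) = \Omega(6^{m/9})$.

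The one place that needs genuine (if routine) justification is \emph{why} $\mi_k(v(G),G) = \mi_k(v(G)+1,G^*)$ holds, and it was asserted in the excerpt but not proved; I would include a one-line verification. The new apex vertex $u$ is adjacent to everything, so $u$ can never belong to an independent set of size $\geq 2$, hence $u \notin D$ for any $k$-DIS $D$ of $G^*$ (for $k\geq 1$ a $k$-DIS has at least $k+1 \geq 2$ vertices whenever $G$ is nonempty); and conversely, if $D$ is a $k$-DIS of $G$, then in $G^*$ the vertex $u$ is adjacent to all $|D|\geq k$ vertices of $D$, so $u$ is $k$-dominated, and every other vertex outside $D$ keeps its $\geq k$ neighbours in $D$, so $D$ remains a $k$-DIS of $G^*$; independence is unaffected since no edges inside $V(G)$ are added. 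This gives a bijection between the $k$-DISes of $G$ and those of $G^*$, establishing the identity. (A small degenerate case: if $\beta$ could force $\mi_2(G_n)$ to have some $2$-DIS of size exactly $2$ one should double-check $k$-domination of $u$ still needs only $k=2$ neighbours, which it trivially does.)

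I do not expect any real obstacle here: the corollary is a direct consequence of Proposition~\ref{order} plus the universal-vertex trick, and the only care needed is the bookkeeping that passing from $n$ to $n+1$ vertices does not change the $\Omega(6^{n/9})$ order of magnitude, which is immediate since $6^{1/9}$ is a constant. The main conceptual content — the construction achieving $\Omega(6^{n/9})$ — has already been done in Construction~\ref{pelda1} and Proposition~\ref{order}; the corollary merely observes that connectivity comes for free.
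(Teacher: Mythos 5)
Your proposal is correct and follows exactly the paper's route: the paper derives this corollary in one line by applying the observation $\mi_k(v(G),G)=\mi_k(v(G)+1,G^*)$ to the graphs of Construction \ref{pelda1}, which is precisely your universal-vertex argument. Your added verification of that observation (noting the apex cannot lie in a $2$-DIS and is itself $2$-dominated since every $2$-DIS of $G_n$ has size at least $2$) is sound and merely fills in a detail the paper leaves implicit.
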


\subsection{ Connections to MDS codes}

We begin with some preliminaries about coding theory and MDS codes, for more details we refer to \cite{code}.

\begin{definition} Let $C\subseteq \mathbb{F}_q^n$ be a set of codewords in the vector space $\mathbb{F}_q^n$. Defining the Hamming metric $d(*,*)$ on $\mathbb{F}_q^n$, $d(C)$ --  called the minimal distance --  is $d(C)=\min\{d(c, c') : c, c'\in C, c\neq c'\}$. A code $C\subseteq \mathbb{F}_q^n$ is a $q$-ary $(n,M,d)_q$ code if the dimension of the vector space is $n$, $|C|=M$ and the minimal distance is $d$. 
A code $C$ is linear if it is a subspace of the vector space $\mathbb{F}_q^n$.
\end{definition}

The Singleton bound for  a  $q$-ary $(n,M,d)$ code states that $|C|\leq q^{n-d+1}$. If equality holds, then $C$ is said to be a maximum distance separable code, or simply, an MDS code. 
(Linear) MDS codes are extensively studied, and have strong connections to finite geometries, namely, to the existence of certain arcs in multidimensional projective spaces, see \cite{code}. The problem of determining the number of linear MDS codes in $\mathbb{F}_q^n$ of minimal distance $d$ was essentially posed by Segre, and determined so far only in some special cases \cite{mdses1, mdses2}. We highlight here only 

\begin{proposition}\label{enum} The number of linear $q$-ary $(n,M,2)_q$ MDS codes is $(q-1)^{n-1}$.
\end{proposition}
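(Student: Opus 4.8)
The plan is to characterize linear $(n, M, 2)_q$ MDS codes structurally and then count them directly. By the Singleton bound, an MDS code with $d = 2$ has $M = q^{n-1}$, so it is a linear subspace of dimension $n-1$, i.e. a hyperplane in $\mathbb{F}_q^n$. Thus $C = \{x \in \mathbb{F}_q^n : a_1 x_1 + \cdots + a_n x_n = 0\}$ for some nonzero vector $a = (a_1, \dots, a_n)$, unique up to a nonzero scalar. The first step is to translate the distance condition $d(C) = 2$ — equivalently, the requirement that $C$ contains no nonzero codeword of weight $1$ — into a condition on $a$. A weight-one vector $c = \lambda e_i$ lies in $C$ iff $a_i \lambda = 0$, so $C$ has minimum distance at least $2$ precisely when $a_i \neq 0$ for every coordinate $i$. (Conversely any hyperplane with all $a_i \neq 0$ automatically has $d(C) = 2$, since $e_1 - e_2$-type combinations of weight $2$ do lie in it whenever, say, $a_1 = a_2$, and in general one can always produce a weight-$2$ codeword; in any case $d \le 2$ is forced by Singleton once $d \ge 2$.)

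Having established the bijection, I would count: the defining functionals $a$ with all coordinates nonzero number $(q-1)^n$, and two such functionals define the same code iff they differ by a nonzero scalar, giving $(q-1)^n / (q-1) = (q-1)^{n-1}$ codes. This matches the claimed count. The only genuinely substantive point is the equivalence ``$d(C) = 2 \iff$ all $a_i \neq 0$'', and in particular checking that no MDS code with $d = 2$ can secretly have a coordinate functional equal to zero — but that is immediate, since $a_i = 0$ yields the weight-one codeword $e_i \in C$, forcing $d(C) = 1$.

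I expect no real obstacle here; the main thing to be careful about is the normalization when passing from defining vectors to codes (the $(q-1)$-to-$1$ scaling), and stating cleanly why a hyperplane with nowhere-vanishing defining vector has minimum distance exactly $2$ rather than more — which follows because $q^{n-1} = q^{n-d+1}$ forces $d \le 2$, combined with $d \ge 2$ from the weight-one analysis. One could alternatively phrase the whole argument via parity-check/generator matrices (a linear code has $d \ge 2$ iff its parity-check matrix has no zero column), but the hyperplane description is the most economical route to the exact enumeration.
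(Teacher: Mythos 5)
Your argument is correct and complete. Note, however, that the paper itself gives no proof of Proposition \ref{enum}: it is stated as a known fact from the literature on enumerating linear MDS codes (the surrounding text cites the works of Ghorpade--Lachaud and Blokhuis--Bruen--Thas), so there is no in-paper argument to compare against. Your route --- identify a linear $(n,q^{n-1},2)_q$ MDS code with a hyperplane $\{x: \sum a_ix_i=0\}$, observe that $d(C)\ge 2$ is equivalent to all $a_i\neq 0$ (no weight-one codeword $\lambda e_i$), note that $d\le 2$ is then forced by the Singleton bound since $M=q^{n-1}$, and divide the $(q-1)^n$ admissible functionals by the $(q-1)$-fold scalar ambiguity --- is the standard elementary derivation, and it supplies exactly the justification the paper omits. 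The only caveat worth recording is the implicit assumption $n\ge 2$ (for $n=1$ the notion of minimum distance $2$ degenerates), which does not affect the use the paper makes of the proposition.
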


\noindent Much less is known  about the number of all MDS codes in $\mathbb{F}_q^n$ of minimal distance $d$. 

\bigskip

\noindent Now we return to Construction \ref{pelda1}. One may suggest that similar graph products with multiple terms yield bounds on $\zeta_k(n)$.\\
 Consider $t$ disjoint copies  of  $(K_3)^k$. The set $V( (K_3)^k)$ can be represented by vectors over $\mathbb{F}_3$ of length $k$, and two of them is adjacent if and only if they differ in exactly $1$ coordinate. Hence a subset $D$ of $V( (K_3)^k)$ is a  $k$-DIS if and only if every fixed $k-1$ coordinate determines exactly one element of $D$. In other words, $D$ is a set of $3^{k-1}$ vectors from $\mathbb{F}_3^k$, with minimal Hamming distance $2$. Consequently, $D$ is a  MDS code, and the number of $k$-DISes in $ (K_3)^k$ is the number of (not necessarily linear) $(k, 3^{k-1}, 2)_3$ MDS codes. 

\begin{theorem}\label{mds} The number of $(k, 3^{k-1}, 2)_3$ MDS codes is $3\cdot2^{k-1}$.
\end{theorem}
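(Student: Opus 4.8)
The plan is to translate the counting of $(k,3^{k-1},2)_3$ MDS codes into a purely combinatorial statement about Latin-hypercube-like structures and then exhibit a recursion. Recall from the preceding discussion that such an MDS code $D$ is exactly a set of $3^{k-1}$ vectors in $\mathbb{F}_3^k$ with the property that every fixing of $k-1$ of the $k$ coordinates is realized by exactly one codeword. Equivalently, projecting away the last coordinate gives a bijection from $D$ onto $\mathbb{F}_3^{k-1}$, so $D$ is the graph of a function $f\colon \mathbb{F}_3^{k-1}\to\mathbb{F}_3$; the remaining $k-1$ MDS conditions (fix the last coordinate and $k-2$ of the first $k-1$) say precisely that for each coordinate $i\le k-1$ and each way of fixing the other $k-2$ of the first $k-1$ coordinates, the map $x_i\mapsto f(\ldots)$ together with the value of $x_i$ determines things bijectively — i.e. $f$ restricted to every axis-parallel line in $\mathbb{F}_3^{k-1}$ is a bijection onto $\mathbb{F}_3$, OR, phrased cleanly, the map sending $x\mapsto (x_1,\dots,x_{k-2},f(x))$ is a permutation of $\mathbb{F}_3^{k-1}$ for each choice of which input coordinate is "replaced". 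So the first step is to nail down this reformulation: \emph{$D$ corresponds to a function $f$ such that $f$ is a "permutation in every coordinate direction"}, which is the notion of a Latin hypercube of order $3$.

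The second step is to set up and solve the recursion on $k$. Write $f\colon \mathbb{F}_3^{k-1}\to\mathbb{F}_3$ and fix the last input coordinate $x_{k-1}=0,1,2$, obtaining three functions $f_0,f_1,f_2\colon\mathbb{F}_3^{k-2}\to\mathbb{F}_3$. Each $f_j$ must itself be a Latin hypercube of order $3$ in $k-2$ variables (the line-bijectivity conditions not involving the last coordinate restrict to each slice), and the line-bijectivity condition \emph{in} the last coordinate says that for every $y\in\mathbb{F}_3^{k-2}$ the triple $(f_0(y),f_1(y),f_2(y))$ is a permutation of $\{0,1,2\}$. The key structural claim I expect to prove is that over $\mathbb{F}_3$ this forces $f_1=f_0+c$ and $f_2=f_0+2c$ pointwise for a single constant $c\in\{1,2\}$ — i.e. the three slices are "parallel". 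This is the crux: one shows that the difference $f_1-f_0$, viewed as a function $\mathbb{F}_3^{k-2}\to\mathbb{F}_3^*$, must be constant, using that both $f_0$ and $f_1$ are line-bijective and that $f_2$ is then forced and must also be line-bijective. Granting this, the number $a_k$ of $(k,3^{k-1},2)_3$ MDS codes satisfies $a_k = 2\cdot a_{k-1}$: choose the slice $f_0$ (that is, an MDS code in one fewer coordinate, $a_{k-1}$ ways) and the shift $c$ ($2$ ways). With the base case $a_1 = 3$ (the three codewords of $\mathbb{F}_3^1$ with $d=2$ are the three singletons, i.e. the three constant functions on a point) one gets $a_k = 3\cdot 2^{k-1}$.

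I expect the main obstacle to be exactly the "parallel slices" claim — proving that line-bijectivity of the three slices together with the columnwise-permutation condition forces the slices to be translates of one another. I would prove it by a local argument: take any two points $y,y'\in\mathbb{F}_3^{k-2}$ joined by an axis-parallel edge. On that line $f_0,f_1,f_2$ are each bijections $\{y,y',y''\}\to\mathbb{F}_3$, and the columnwise condition at $y,y',y''$ severely constrains the $3\times 3$ array of values; a short finite check over $\mathbb{F}_3$ shows the only consistent possibilities have $f_1-f_0$ and $f_2-f_0$ constant along that line. Since $\mathbb{F}_3^{k-2}$ is connected under axis-parallel edges (for $k-2\ge 1$; the case $k=2$ is immediate), the constant propagates globally. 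The remaining steps — that each slice is a lower-dimensional MDS code, the bijection with the recursion, and the base case — are routine, so I would state them briefly and spend the bulk of the write-up on the parallel-slices lemma. An alternative, if the case analysis is unpleasant, is to note that over $\mathbb{F}_3=\mathbb{Z}_3$ one may add the columnwise-sum identity $f_0+f_1+f_2 \equiv 0+1+2 = 0 \pmod 3$ to deduce $f_2 = -f_0-f_1$, reducing the problem to showing $f_1-f_0$ is constant, which slightly shortens the bookkeeping.
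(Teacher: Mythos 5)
Your proposal is correct, and it takes a route that is recognizably different from the paper's, even though both proceed by induction on $k$ with a factor of $2$ per step. The paper splits a $(k+1,3^k,2)_3$ code by the value of the first coordinate, observes that the zero-slice projects to a $(k,3^{k-1},2)_3$ code, and bounds the number of ways to reattach the nonzero part by the number of proper $2$-colorings of a Hamming-distance-one graph on the remaining $2\cdot 3^{k}/3$ projected codewords; the nontrivial step is Lemma \ref{finish}, the connectivity of that graph, and since this only yields an upper bound of $2a_k$ the paper must separately supply the matching lower bound by counting linear codes (Proposition \ref{enum}) and their translates. You instead prove an exact recursion $a_k=2a_{k-1}$ via the Latin-hypercube reformulation and the parallel-slices lemma; the crux there is the observation that in a $3\times 3$ Latin square over $\mathbb{Z}_3$ the difference of two rows is pointwise nonzero and sums to $0$ over the three columns, hence is constant, and the constancy propagates along the (trivially connected) axis-parallel line graph of $\mathbb{F}_3^{k-2}$. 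This buys you two things: you avoid invoking the count of linear MDS codes altogether, and you obtain directly the structural statement that the slices are translates of one another --- which is exactly the content of the Remark following the paper's proof, namely that every $(k,3^{k-1},2)_3$ MDS code is a translate of a linear one. The paper's argument, in exchange, is phrased so that the "at most two extensions" step generalizes verbatim to the question of counting $(n,M,2)_q$ MDS codes for larger $q$ via $(q-1)$-colorings (see Problem \ref{mdscode}), where your exact parallel-slices structure is special to $q=3$. Do make sure, in a full write-up, to state explicitly that each slice $f_j$ inherits line-bijectivity and that conversely every pair $(f_0,c)$ with $c\in\{1,2\}$ yields a valid hypercube via $f(y,j)=f_0(y)+jc$, so that the correspondence is a genuine bijection and not merely an injection.
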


\begin{proof} We prove by induction on $k$.  For $k=1$, the statement clearly holds.\\
First observe that  Proposition \ref{enum} provides $3\cdot2^{k-1}$  general $(k, 3^{k-1}, 2)_3$ MDS codes.  Indeed, any linear MDS code $C$ contains the all-zero vector, and their  translations $C+(0,\cdots, 0,1)$ and  $C-(0,\cdots, 0,1)$
yields suitable new codes. \\
Hence it is enough to  prove that the number of $(k+1, 3^{k}, 2)_3$ MDS codes is at most twice the number of  $(k, 3^{k-1}, 2)_3$ MDS codes if $k\geq 1$. Observe that if one prescribes the value of arbitrary $k$ coordinates in a $(k+1, q^{k}, 2)_q$ MDS code, then exactly one codeword will fulfill the condition.
 Consider a $(k+1, 3^{k}, 2)_3$ MDS code. Observe that the set of codewords having zero as first coordinate are corresponding to a $(k, 3^{k-1}, 2)_3$ MDS code. Indeed, the minimal distance does not change while deleting the first coordinate yields a set of $3^{k-1}$ codewords of length $k$. Finally we prove that such a $(k, 3^{k-1}, 2)_3$ MDS code could be obtained from at most two  $(k+1, 3^{k}, 2)_3$ MDS codes. To this end, delete the first coordinate of the codewords, and omit those codewords which had $0$ on the first coordinate. Thus we get $2\cdot 3^{k-1}$ vectors in $\mathbb{F}_3^k$. Assign a graph $G$ to this vector set by connecting every pair of vectors which are at Hamming distance $1$. The number of proper two-colorings of this graph by colors '1' and '2' is equivalent to the number of extensions of this vector set by an appropriate first coordinate to get a $(k+1, 3^{k}, 2)_3$ MDS code together with the omitted codewords. Notice that the number of proper two-colorings is at most two for any connected graph. Thus Lemma \ref{finish}
finishes the proof.
\end{proof}

\begin{lemma}\label{finish} $G$, the graph assigned to the codewords of nonzero first coordinate, is  connected.
\end{lemma}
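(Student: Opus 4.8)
Recall how $G$ arises in the proof of Theorem~\ref{mds}: from a $(k+1,3^{k},2)_3$ MDS code $C$ we delete the first coordinate, which identifies $C$ bijectively with $\mathbb{F}_3^{k}$ (any $k$ of the $k+1$ positions form an information set), and $S_0\subseteq\mathbb{F}_3^k$ denotes the image of the $3^{k-1}$ codewords whose first coordinate was $0$; this $S_0$ has pairwise Hamming distance at least $2$, hence is an independent set of the Hamming graph $(K_3)^{k}$ of size $3^{k-1}=\alpha\big((K_3)^{k}\big)$, i.e.\ a maximum independent set. Since $G$ is exactly $(K_3)^{k}$ with the vertices of $S_0$ deleted, it suffices to prove: \emph{for every $m\ge 1$ and every maximum independent set $M$ of $(K_3)^{m}$, the graph $(K_3)^{m}-M$ is connected.} I would do this by induction on $m$, the case $m=1$ being $K_3$ minus a vertex.

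For the inductive step I would write $(K_3)^{m}=(K_3)^{m-1}\,\square\,K_3$ and cut the vertices into three layers $L_0,L_1,L_2$ by the last coordinate, each a copy of $(K_3)^{m-1}$ on $\mathbb{F}_3^{m-1}$. Put $A_i:=M\cap L_i$. Then every $A_i$ is independent in $(K_3)^{m-1}$, and because two vertices of different layers are adjacent precisely when they agree on the first $m-1$ coordinates, independence of $M$ forces $A_0,A_1,A_2$ to be pairwise disjoint; hence $|A_0|+|A_1|+|A_2|=3^{m-1}$. If some layer, say $L_2$, is untouched (that is, $A_2=\emptyset$), then $L_2\cong(K_3)^{m-1}$ survives intact and is connected, while any surviving vertex $(x,j)$ with $j\in\{0,1\}$ has $x\notin A_j$ and trivially $x\notin A_2$, so $(x,j)$ is joined in $G$ to $(x,2)$; thus $G$ is connected.

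Otherwise all three $A_i$ are nonempty, and this is the point on which the argument turns. Each $|A_i|\le\alpha\big((K_3)^{m-1}\big)=3^{m-2}$, and since the three sizes add up to $3\cdot 3^{m-2}$, \emph{every} $A_i$ must be a \emph{maximum} independent set of $(K_3)^{m-1}$ — which is exactly the hypothesis needed to apply the inductive assumption layer by layer, giving that each $L_i-A_i$ is connected. Finally, for $i\ne j$ pick any $x$ in the nonempty third class $A_\ell$; disjointness gives $x\notin A_i\cup A_j$, so $(x,i)$ and $(x,j)$ are both present in $G$ and adjacent there, gluing the three pieces into one connected graph. The only external fact used is $\alpha\big((K_3)^{m}\big)=3^{m-1}$, which follows from Hoffman's ratio bound — or, self-containedly, from the same layer decomposition (disjointness gives $\alpha\big((K_3)^{m}\big)\le 3\,\alpha\big((K_3)^{m-1}\big)$ with $\alpha(K_3)=1$, while the vectors of coordinate-sum $0$ in $\mathbb{F}_3$ realise it). The main obstacle is precisely the all-nonempty case together with the realisation that the counting constraint pins down each $A_i$ as a maximum independent set; once that is seen, everything else is bookkeeping.
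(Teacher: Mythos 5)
Your argument is correct, but it is a genuinely different proof from the one in the paper. The paper argues by contradiction: it takes two codewords of nonzero first coordinate lying in different components of $G$ whose truncations are at minimal Hamming distance among all such cross-component pairs, and uses the MDS completion property of the ambient $(k+1,3^{k},2)_3$ code (any $k$ prescribed coordinates extend to exactly one codeword) to manufacture a cross-component pair at strictly smaller distance. You instead reduce the lemma to a clean, purely graph-theoretic statement --- the complement of any maximum independent set in the Hamming graph $(K_3)^{m}$ is connected --- and prove that by induction on $m$ via the three-layer decomposition $(K_3)^{m}=(K_3)^{m-1}\square K_3$; your key step, that the pairwise-disjoint layer traces $A_0,A_1,A_2$ are forced by the count $\sum|A_i|=3^{m-1}$ to each be maximum independent sets of $(K_3)^{m-1}$, is sound, and the gluing edges $(x,i)\sim(x,j)$ with $x$ drawn from the third trace exist because the traces are disjoint and nonempty. (Your vacuous ``some $A_i=\emptyset$'' case is harmless, and the identification of the vertex set of $G$ with $\mathbb{F}_3^{k}\setminus S_0$ via the information-set bijection is exactly the fact the paper records before the lemma.) What each approach buys: the paper's proof is shorter and leans on the full $(k+1)$-dimensional code, whereas yours isolates a more general and reusable fact that needs only the zero-slice $S_0$ (i.e.\ only that it is a $(k,3^{k-1},2)_3$ MDS code), and its inductive layer analysis makes visible the recursive structure of such codes that the paper's subsequent Remark alludes to; the price is a slightly longer setup and the extra (easy) ingredient $\alpha((K_3)^{m})=3^{m-1}$.
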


\begin{proof}
We prove by contradiction.  Assume that $ (v_1, v_2, \ldots v_{k+1})$ and $(w_1, w_2, \ldots, w_{k+1})$ are codewords, $v_1\neq 0 \neq w_1$, furthermore $\textbf{v}=(v_2, \ldots v_{k+1})$ and $\textbf{w}=(w_2, \ldots, w_{k+1})$ are in different component of $G$ and their Hamming distance is minimal w.r.t. pairs of codewords  taken from different components of $G$. 
Note that $\textbf{v}$ and $\textbf{w}$ must differ in at least two coordinates according to our assumption, hence $k\geq 2$.  W.l.o.g. we may assume that $v_2\neq w_2$. Let us define $z_2$  by $\{v_2, w_2, z_2\}=\{1,2, 0\}$. Since $\textbf{v}$ and $\textbf{w}$ were at the smallest Hamming distance, 
$(w_2, v_3, v_4, \ldots v_{k+1})$ or $(v_2, w_3, w_4, \ldots, w_{k+1})$ cannot be vertices of $G$ since it would yield a smaller Hamming distance. But any $k$ prescribed coordinates can be extended to get a codeword in an $(k+1, 3^{k}, 2)_3$ MDS code, thus $(0, w_2, v_3, v_4, \ldots v_{k+1}), (0, v_2, w_3, w_4, \ldots, w_{k+1}) \in C$.  Hence $(0, z_2, v_3, v_4, \ldots v_{k+1})$ and $(0, z_2, w_3, w_4, \ldots, w_{k+1})$ do not belong to $C$, which implies that $(z_2, v_3, v_4, \ldots v_{k+1})$ and $(z_2, w_3, w_4, \ldots, w_{k+1})$ are in the vertex set of $G$. Observe that they have more common coordinates than $\textbf{v}$ and $\textbf{w}$ had while they still belong to different components, which is a contradiction.
\end{proof}

\begin{remark} The proof implies that the graph assigned to the codewords of nonzero first coordinate is  bipartite as well, and all $(k, 3^{k-1}, 2)_3$ MDS codes are the translates of linear $(k, 3^{k-1}, 2)_3$ MDS codes.
\end{remark}

\begin{corollary} $\zeta_k{((K_3)^k)}=   \sqrt[3^k]{3\cdot2^{k-1}}$. If $k>2$, this is less then $\zeta_k{(K_{k,k})}=   \sqrt[2k]{2}$.
\end{corollary}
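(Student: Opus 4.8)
The plan is to read off the first equality directly from Theorem \ref{mds} and then to reduce the inequality to an elementary estimate. For the equality, I would note that $(K_3)^k$ has exactly $3^k$ vertices, and recall the correspondence set up just before Theorem \ref{mds}: the $k$-DISes of $(K_3)^k$ are precisely the (not necessarily linear) $(k,3^{k-1},2)_3$ MDS codes. Since Theorem \ref{mds} counts the latter as $3\cdot 2^{k-1}$, we get $\mi_k((K_3)^k)=3\cdot 2^{k-1}$, and the definition $\zeta_k(G)=\sqrt[|V(G)|]{\mi_k(G)}$ immediately yields $\zeta_k((K_3)^k)=\sqrt[3^k]{3\cdot 2^{k-1}}$.

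For the inequality I would first invoke Proposition \ref{constr} (legitimately, since its hypothesis $k\le t$ holds with $t=k$) to write $\zeta_k(K_{k,k})=\sqrt[2k]{2}$, so that the claim becomes $\sqrt[3^k]{3\cdot 2^{k-1}}<\sqrt[2k]{2}$ for all $k\ge 3$. Taking base-$2$ logarithms and clearing denominators turns this into
$$2k\log_2\!\bigl(3\cdot 2^{k-1}\bigr)<3^k,\qquad\text{i.e.}\qquad 2k^2+2k\bigl(\log_2 3-1\bigr)<3^k.$$
Since $\log_2 3-1=\log_2\tfrac32<1$, it is enough to verify $2k^2+2k<3^k$ for $k\ge 3$. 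This holds at $k=3$ (as $24<27$) and propagates by a one-line induction: from $3^k>2k^2+2k$ one gets $3^{k+1}=3\cdot 3^k>6k^2+6k$, and $6k^2+6k-\bigl(2(k+1)^2+2(k+1)\bigr)=4(k-1)(k+1)\ge 0$ for $k\ge 1$.

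I do not anticipate a genuine obstacle here: the first assertion is a restatement of Theorem \ref{mds}, and the second collapses, after a single logarithm, to the familiar fact that $3^k$ eventually outgrows any quadratic — which in this instance is already true from $k=3$ on. The one point that needs a little care is to keep the constant $\log_2 3-1$ under control: bounding it by $1$ is enough to close the case $k=3$, whereas a cruder bound would force the base case of the induction upward and require $k=3$ to be treated by hand; both routes are completely routine.
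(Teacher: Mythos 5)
Your proposal is correct and follows exactly the route the paper intends: the first equality is an immediate combination of the vertex count $|V((K_3)^k)|=3^k$ with the bijection to $(k,3^{k-1},2)_3$ MDS codes and Theorem \ref{mds}, while $\zeta_k(K_{k,k})=\sqrt[2k]{2}$ comes from Proposition \ref{constr} with $t=k$. The paper states the corollary without any written proof, and your elementary verification of $2k^2+2k(\log_2 3-1)<3^k$ for $k\geq 3$ (base case $24<27$ plus the induction via $4(k-1)(k+1)\geq 0$) correctly supplies the omitted comparison.
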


\bigskip

\subsection{ Connections to finite geometries}

In this subsection we study constructions coming from finite geometries. The first reason to do this is the fact that  many extremal structures are  provided by geometric constructions in general (see \cite{FS}). In our case  they provide a  graph family with large number of $k$-DISes.
The second reason is that these families have remarkable connections to many interesting subfields of projective geometry, including $m$-fold blocking sets, arcs and tangent-free sets. 

\begin{definition}
Let $PG(2,q)$ denote a finite projective plane over $\mathbb{F}_q$, with point set $\mathcal{P}$ and line set $\mathcal{L}$.  Let $G(\mathcal{P, L})$ be the (bipartite) point-line incidence graph of the geometry.  Note that $G(\mathcal{P, L})$ is a $q+1$ regular graph on $N=2(q^2+q+1)$ vertices.
\end{definition}

\begin{definition}
An $m$-fold blocking set $B$ in a projective plane is a set of points such
that each line contains at least $m$ points of $B$ and some line contains exactly $m$
points of $B$.
\end{definition}

\begin{definition}
In a finite projective plane of order $q$,  a $\{K;t\}$-arc is a nonempty proper subset  $\mathcal{K}$ of $K$ points of the plane such that every line intersects $\mathcal{K}$ in at most $t$ points and there exists a set of $t$ collinear points in $\mathcal{K}$. A $\{K, 2\}$-arc is simply
called a $K$-arc. Note that $\{K, t\}$-arcs and multiple blocking sets are 
complements of each other in a projective plane, that is, the complement of a  $\{K, t\}$-arc  is a  $(q+1-t)$-fold  blocking set. A  $\{K;t\}$-arc is called maximal, if $K=(q+1)t-q$, that is, in the case when the size attains the possible maximum \cite{cossu}. 
\end{definition}

It is well known that every line intersects $\mathcal{K}$ in either $0$ or $t$ points  in a maximal $\{K;t\}$-arc $\mathcal{K}$ \cite{cossu}.  Denniston showed \cite{Denniston} that maximal $\{K;t\}$-arcs exist in  projective planes  $PG(2,q)$ of even order for all divisors $t$ of $q$. On the other hand, Ball, Blokhuis and Mazzocca proved that no maximal $\{K;t\}$-arcs exists in projective planes of odd order \cite{BBM}.

\begin{construction} \label{hyper}
Consider a hyperoval $\mathcal{H}$ in $PG(2,q)$, $q>2$ even, that is, a maximal arc of $q+2$ points. Let the set $D\subseteq V(G)$ consist of the lines skew to $\mathcal{H}$ and the  points of $\mathcal{H}$.  
\end{construction}

\begin{claim} Construction \ref{hyper} provides a $2$-DIS for any hyperoval of the projective geometry.
\end{claim}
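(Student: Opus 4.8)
The plan is to verify directly that the set $D$ consisting of all lines skew to $\mathcal{H}$ together with all points of $\mathcal{H}$ is both independent and $2$-dominating in the incidence graph $G(\mathcal{P},\mathcal{L})$. First I would check independence: an edge of $G$ joins an incident point-line pair, so the only way $D$ could span an edge is if some point $P\in\mathcal{H}$ lies on some line $\ell$ skew to $\mathcal{H}$; but $\ell$ skew to $\mathcal{H}$ means $\ell\cap\mathcal{H}=\emptyset$, so $P\notin\ell$, and there is no edge. Hence $D$ is independent.

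For the $2$-domination I would treat the two sides of the bipartition separately. The vertices outside $D$ are (a) the lines that meet $\mathcal{H}$, and (b) the points not on $\mathcal{H}$. For a line $\ell$ meeting $\mathcal{H}$: since $\mathcal{H}$ is a hyperoval, every line meets it in $0$ or $2$ points, so $\ell$ contains exactly two points of $\mathcal{H}$, and these two points of $\mathcal{H}\subseteq D$ are exactly the $D$-neighbors of $\ell$ on the point side — so $\ell$ is $2$-dominated. For a point $P\notin\mathcal{H}$: the $D$-neighbors of $P$ on the line side are precisely the lines through $P$ that are skew to $\mathcal{H}$. There are $q+1$ lines through $P$; I would count how many meet $\mathcal{H}$. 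The $q+2$ points of $\mathcal{H}$ are distributed among the $q+1$ lines through $P$, each secant line through $P$ carrying exactly $2$ of them and each non-secant carrying $0$; since $q+2$ is even, exactly $(q+2)/2$ of the lines through $P$ are secant to $\mathcal{H}$ and the remaining $q+1-(q+2)/2 = q/2$ lines through $P$ are skew to $\mathcal{H}$. For $q>2$ even we have $q/2\geq 2$, so $P$ has at least two $D$-neighbors and is $2$-dominated.

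Putting the two cases together shows $D$ is a $2$-dominating independent set, which is the claim. The only genuinely substantive ingredient is the classical fact that in $PG(2,q)$ with $q$ even every line meets a hyperoval in $0$ or $2$ points (equivalently, a hyperoval is a maximal $\{q+2;2\}$-arc, so by the cited structural result every line is either skew or a $2$-secant); given that, the argument is a short double count. I expect the main point to watch is simply ensuring the inequality $q/2\geq 2$, i.e.\ that the hypothesis $q>2$ is used — for $q=2$ a point off the hyperoval would have only one skew line through it and $2$-domination would fail, which is exactly why the construction excludes $q=2$.
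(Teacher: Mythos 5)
Your proof is correct and follows essentially the same route as the paper's: every line meets the hyperoval in $0$ or $2$ points, so secants are dominated by exactly two points of $\mathcal{H}$, and each external point lies on exactly $q+1-\frac{q+2}{2}=q/2\geq 2$ skew lines. Your write-up is merely more explicit about independence and about where the hypothesis $q>2$ is used.
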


\begin{proof}
Any line intersects a hyperoval in $0$ or $2$ points, thus the secants  of the hyperoval are dominated by exactly $2$ vertices  of  $D\cap\mathcal{P}$. The points of  $\mathcal{P}\setminus \mathcal{H}$ are also dominated by at least $2$ vertices of $D\cap\mathcal{L}$ since exactly $q+1-\frac{q+2}{2}$ skew lines are going through 
any external point of $\mathcal{H}$. Finally, it is clear that the set of skew lines and the vertices of $\mathcal{H}$ form an independent set in $G(\mathcal{P, L})$.
\end{proof}


There exist other suitable $2$-dominating (or $k$-dominating) independent sets in $G(\mathcal{P, L})$.\\ Let us take a point set $\mathcal{Q}\subseteq \mathcal{P}$ and the lines skew  to $\mathcal{Q}$ from $\mathcal{L}$. This provides a $k$-dominating independent set of $G(\mathcal{P, L})$ if and only if the following conditions hold:

\begin{itemize}
\item[(1)] Any line intersects $\mathcal{Q}$ in $0$ or at least  $k$ points,
\item[(2)] There exist at least $k$ skew lines to $\mathcal{Q}$ through any point in $\mathcal{P}\setminus\mathcal{Q}$.
\end{itemize}

\begin{corollary}\label{kicsi}
If $\mathcal{Q}$ is  a set without tangents on at most $2q-2$ points, the conditions above  hold for $k=2$. 
\end{corollary}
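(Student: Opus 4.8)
The plan is to check conditions (1) and (2) separately; each reduces to an elementary incidence count in $PG(2,q)$. First I would observe that condition (1) for $k=2$ is merely a reformulation of the hypothesis: a line meeting $\mathcal{Q}$ in exactly one point is by definition a tangent to $\mathcal{Q}$, so if $\mathcal{Q}$ is tangent-free then every line of $\mathcal{L}$ meets $\mathcal{Q}$ in $0$ points or in at least $2$ points. This is exactly condition (1), and no size assumption is needed for it.

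For condition (2), fix a point $P\in\mathcal{P}\setminus\mathcal{Q}$ (such a point exists since $|\mathcal{Q}|\le 2q-2<q^2+q+1$, so $\mathcal{Q}$ is a proper subset of $\mathcal{P}$), and consider the pencil of the $q+1$ lines through $P$. Every point of $\mathcal{Q}$ lies on exactly one line of this pencil, so the pencil partitions $\mathcal{Q}$ into $q+1$ (possibly empty) parts. By condition (1), every line $\ell$ of the pencil that is not skew to $\mathcal{Q}$ satisfies $|\ell\cap\mathcal{Q}|\ge 2$. Hence, if at most one line of the pencil were skew to $\mathcal{Q}$, then at least $q$ lines of the pencil would each contribute at least two distinct points of $\mathcal{Q}$, forcing $|\mathcal{Q}|\ge 2q$ and contradicting $|\mathcal{Q}|\le 2q-2$. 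Therefore at least two lines through $P$ are skew to $\mathcal{Q}$, which is precisely condition (2).

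There is no genuine obstacle here; the statement is a short corollary of the characterization preceding it. The one step worth spelling out is the disjointness of the sets $\ell\cap\mathcal{Q}$ as $\ell$ ranges over the pencil through $P$ — this is what makes the lower bounds on $|\mathcal{Q}|$ additive — and it is immediate, since two distinct lines through $P$ meet only in $P\notin\mathcal{Q}$. For completeness I would also remark that the same pencil argument works verbatim with the (slightly weaker) bound $|\mathcal{Q}|\le 2q-1$, and that it yields the analogue of conditions (1)--(2) for general $k$ whenever every line meets $\mathcal{Q}$ in $0$ or at least $k$ points and $|\mathcal{Q}|\le k(q-k+2)-1$.
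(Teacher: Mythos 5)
Your proof is correct and takes essentially the same approach as the paper: condition (1) is just the definition of being tangent-free, and condition (2) follows from the pencil count through an external point, where the $l$ non-skew lines contribute disjoint sets of at least two points each, so $|\mathcal{Q}|\ge 2l$ forces at least two skew lines when $|\mathcal{Q}|\le 2q-2$. Your side remarks (that $|\mathcal{Q}|\le 2q-1$ already suffices, and the general-$k$ bound $|\mathcal{Q}|\le k(q-k+2)-1$) are also correct, though not needed here.
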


\noindent Indeed, $(1)$ holds by definition, while $(2)$ is easy to check since if $l$ lines intersect $\mathcal{Q}$ through a given point in $\mathcal{P}\setminus\mathcal{Q}$, then $|\mathcal{Q}|\geq 2l$ must hold.
\medskip

Beside hyperovals of planes of even order, various families of sets are known which fulfill the conditions (1) and (2).
First, consider the generalization of Construction \ref{hyper}.

\begin{construction} \label{k-arc}
 Consider a maximal $\{K;t\}$-arc $\mathcal{K}$ in $PG(2,q)$, $q$ even. Let $G(\mathcal{P, L})$ be the point-line incidence graph of the geometry, and let the set $D\subseteq V(G)$ consists of the lines skew to $\mathcal{K}$ and the  points of $\mathcal{K}$.  
\end{construction}

\begin{claim} Construction \ref{k-arc} provides a $t$-dominating independent set for any maximal $\{K;t\}$-arc of the projective geometry if $t\leq \sqrt{q}$. 
\end{claim}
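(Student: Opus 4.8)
The plan is to apply the criterion stated just above Corollary \ref{kicsi}: for a point set $\mathcal{Q}\subseteq\mathcal{P}$, the set consisting of $\mathcal{Q}$ together with the lines of $\mathcal{L}$ skew to $\mathcal{Q}$ is a $k$-dominating independent set of $G(\mathcal{P},\mathcal{L})$ exactly when (1) every line meets $\mathcal{Q}$ in $0$ or at least $k$ points, and (2) every point of $\mathcal{P}\setminus\mathcal{Q}$ lies on at least $k$ lines skew to $\mathcal{Q}$. I would take $\mathcal{Q}=\mathcal{K}$ and $k=t$, so the whole proof reduces to checking (1), (2), and independence.

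First I would record the parameters of a maximal arc. Since $\mathcal{K}$ is maximal, $|\mathcal{K}|=(q+1)t-q$, and, as already quoted in the text, every line of $PG(2,q)$ meets $\mathcal{K}$ in either $0$ or exactly $t$ points; in particular condition (1) holds trivially for $k=t$.

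The heart of the argument is condition (2), which is a pencil count. Fix a point $P\in\mathcal{P}\setminus\mathcal{K}$. The $q+1$ lines through $P$ partition the points of $\mathcal{K}$, and each such line is either skew to $\mathcal{K}$ or a $t$-secant; if $s$ of them are $t$-secants then $st=|\mathcal{K}|=(q+1)t-q$, whence $s=q+1-q/t$, so the number of lines through $P$ skew to $\mathcal{K}$ equals $(q+1)-s=q/t$. Here one uses that $t\mid q$, which is part of Denniston's existence theorem, so that $q/t$ is a genuine integer. Now the hypothesis $t\le\sqrt{q}$ is precisely equivalent to $q/t\ge t$, so every point outside $\mathcal{K}$ lies on at least $t$ skew lines, i.e.\ (2) holds with $k=t$.

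Finally, independence of $D=\mathcal{K}\cup\{\text{lines skew to }\mathcal{K}\}$ in the bipartite graph $G(\mathcal{P},\mathcal{L})$ is immediate: there are no edges within $D\cap\mathcal{P}$ or within $D\cap\mathcal{L}$ since $G$ is bipartite between $\mathcal{P}$ and $\mathcal{L}$, and no point of $\mathcal{K}$ is incident with a line skew to $\mathcal{K}$ by the very definition of ``skew''. I do not expect a real obstacle here; the only points needing a little care are the integrality of $q/t$ (supplied by the divisibility in Denniston's theorem) and the observation that the stated bound $t\le\sqrt{q}$ is exactly the inequality $q/t\ge t$ that makes (2) go through — which also shows the bound is sharp for this construction, since at $t=\sqrt{q}$ there are exactly $t$ skew lines through each external point. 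Note that the parity assumption $q$ even is used only to guarantee the existence of the maximal arc, not in the counting itself.
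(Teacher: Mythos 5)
Your proof is correct and follows essentially the same route as the paper: both verify conditions (1) and (2) from the criterion preceding Corollary \ref{kicsi}, with (2) established by counting secants in the pencil of lines through an external point and observing that $t\le\sqrt{q}$ is exactly the inequality $q\ge t^2$ needed. The only cosmetic difference is that you compute the exact number $q/t$ of skew lines while the paper runs the same count as an inequality $(q+1-t)t\ge|\mathcal{K}|$.
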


\noindent Indeed, $(1)$ holds by definition. Concerning $(2)$, at most $q+1-t$ lines can intersect $\mathcal{K}$ through a given point in $\mathcal{P}\setminus\mathcal{K}$, thus $(q+1-t)t\geq |\mathcal{K}|=t(q+1)-q \Leftrightarrow q\geq t^2$.
\smallskip

The so-called $(q+t, t)$-arcs of type $(0,2,t)$ were investigated by Korchm\'aros, Mazzocca, G\'acs and Weiner \cite{korchmaros, gacs}. These are pointsets of $q+t$ points in $PG(2,q)$ such that every line meets them in either $0, 2$ or $t$ points, $2<t<q$. It is easy to see that a necessary condition for their existence is that $t$ divides $q$ and $q$ is even. In \cite{korchmaros} the authors construct an infinite series of examples whenever the field $GF(q/t)$ is a subfield of $GF(q)$. G\'acs and Weiner \cite{gacs} added further geometric and algebraic constructions, moreover, applying a projecting method to maximal $\{2^s(q+1)-q, 2^s\}$-arcs, they presented $(q^{h-1}(2^s(q+1)-q), q^{h-1}2^s)$-arcs of type $(0,2^s,2^sq^{h-1})$  with $h\in \mathbb{Z}^+$. Observe that these sets are examples for $k$-DISes with $k>2$ as well.

So far, we have seen tangent-free sets only if $q$ is even. For  any odd prime power $q>5$, Blokhuis, Seres and Wilbrink presented
 a suitable set of $2q-2$ points arises from the symmetric difference of two conics \cite{BSW}, which provides a $2$-DIS via Corollary \ref{kicsi}. For $q$ prime, no example is known having fewer vertices. 
  If $q=p^h$, $h>1$, Lavrauw, Storme and Van de Voorde constructed a set without tangents of size $q+(q-p)/(p-1)<2q-2$ \cite{LSV}. Up to now, this is the smallest known tangent-free pointset in the $q$ odd case.
   The main idea was to apply the following result. Consider a set $\mathcal{S}$ of $q$ affine points in $PG(2, q)$, $p > 2$, and let $D$ be the set of determined directions of $\mathcal{S}$, lying on the ideal line . If $|D| < (q + 3)/2$,
then  $\mathcal{S}$  together with the complement of $D$ w.r.t. the ideal line is a set without tangents. This was observed and applied by Blokhuis, Brouwer and Sz\H onyi \cite{BBS}, showing a set without tangent of size $2q-q/p$.



\section{Proof of the upper bounds of Theorem \ref{fo1} and \ref{fo2}}

In Section 4 we proved a lower bound on $\mi_2(n)$ in Proposition \ref{order} which provides  $\Theta(1,22^n)<~\mi_2(n)$. This section is devoted to the results on upper bounds. Following the idea of F\"uredi \cite{furedi}, the approach is inductive. We begin with a general upper bound which highlights the key concept.

\begin{proposition}\label{upper1}
Let $\alpha_k:=\max_{d\in \mathbb{Z}^+} \{ \sqrt[d+1]{\frac{k+d}{k}}\}$. Then
$\mi_k(n)= O(\alpha_k^n)$.
\end{proposition}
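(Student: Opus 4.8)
The plan is to run an induction on $n$ in the spirit of F\"uredi's branching argument. The core is the recursion
\[
\mi_k(G)\ \le\ \frac{k+\delta(G)}{k}\cdot\mi_k(n-1-\delta(G))
\]
valid for every graph $G$ on $n$ vertices, where $\delta(G)$ denotes the minimum degree; feeding this into a routine induction yields $\mi_k(n)=O(\alpha_k^n)$. Two ingredients are needed. First, \emph{monotonicity}: $\mi_k(m)\le\mi_k(m')$ whenever $m\le m'$, since appending isolated vertices to an extremal graph changes nothing (a vertex with no neighbour must lie in every $k$-DIS, as it could never be $k$-dominated). Second, a \emph{deletion lemma}: for any fixed vertex $w$ of $G$, the number of $k$-DISes of $G$ containing $w$ is at most $\mi_k(n-1-d(w))$.

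For the deletion lemma, I would argue that $D\mapsto D\setminus\{w\}$ is an injection from the $k$-DISes containing $w$ into the $k$-DISes of $G-N[w]$. Indeed, if $w\in D$ then independence forces $N(w)\cap D=\emptyset$, so $D\setminus\{w\}\subseteq V(G)\setminus N[w]$; and for any $u\in V(G)\setminus(N[w]\cup D)$ the at least $k$ neighbours of $u$ lying in $D$ are all distinct from $w$ (because $u\notin N(w)$) and hence survive in $D\setminus\{w\}$ inside $G-N[w]$. So $D\setminus\{w\}$ is a $k$-DIS of $G-N[w]$, and the bound follows after invoking monotonicity to pass to the vertex count $n-1-d(w)$.

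Now take $v$ of minimum degree $d:=\delta(G)$ and split the $k$-DISes of $G$ according to whether they contain $v$. Those containing $v$ number at most $\mi_k(n-1-d)$ by the deletion lemma. For those with $v\notin D$, each such $D$ satisfies $|N(v)\cap D|\ge k$, so
\[
\#\{D : v\notin D\}\ \le\ \frac1k\sum_{D:\,v\notin D}|N(v)\cap D|\ =\ \frac1k\sum_{w\in N(v)}\#\{D : w\in D\}\ \le\ \frac{d}{k}\,\mi_k(n-1-d),
\]
where the final inequality uses the deletion lemma and $d(w)\ge d$ for $w\in N(v)$, together with monotonicity. Summing the two contributions gives the displayed recursion. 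To finish, fix $C$ large enough that $\mi_k(n)\le C\alpha_k^n$ for all small $n$; the inductive step then amounts to checking that $\frac{k+d}{k}\,\alpha_k^{-(d+1)}\le 1$ for every $d\ge1$ (and trivially for $d=0$), which holds because $\alpha_k=\max_{d\ge1}\sqrt[d+1]{\frac{k+d}{k}}$.

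I expect the substance to be concentrated in the deletion lemma and the averaging step: one must verify carefully that removing the closed neighbourhood of a member of $D$ preserves the $k$-DIS property — the point being that none of the $\ge k$ witnesses of domination of a surviving vertex equals the removed vertex $w$ — and that the weight extracted in the $v\notin D$ case is exactly $d/k$, which is secured by the linear averaging $1\le|N(v)\cap D|/k$ rather than by crudely summing over all $k$-subsets of $N(v)$ (which would cost a spurious factor $\binom{d}{k}$). The degenerate cases ($\delta(G)<k$, or $G=K_n$, where $G$ may carry no $k$-DIS at all) are absorbed automatically, since the recursion only ever asserts an upper bound.
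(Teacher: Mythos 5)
Your proposal is correct and follows essentially the same route as the paper: split the $k$-DISes of $G$ according to whether they contain a minimum-degree vertex $v$, bound each class by $\mi_k(n-1-\delta)$ via deletion of a closed neighbourhood, and extract the factor $\delta/k$ by the averaging/overcounting argument, yielding $\mi_k(n)\leq(1+\frac{\delta}{k})\mi_k(n-\delta-1)$. The only difference is that you spell out the deletion lemma and monotonicity explicitly where the paper calls them evident.
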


\begin{proof}
Let $\delta$ denote the minimal degree in a graph $G$, and let $v$ be a vertex of minimal degree in $G$. Any $k$-dominating independent set of $G$ contains either $v$ and none of $N(v)$, or at least $k$ vertices of $N(v)$. The number of $k$-DISes containing $v$ is evidently at most $\mi_k(n-\delta-1)$, while the number of $k$-DISes not containing $v$ is at most $\frac{\delta}{k}\mi_k(n-\delta-1)$. Indeed, any  $w\in N(v)$ appears in at most $\mi_k(n-\delta-1)$ $k$-DISes, and the $k$-dominating property concerning the vertex $v$ implies that we counted any such $k$-dominating independent set at least $k$ times. Hence $\mi_k(n)\leq (1+\frac{\delta}{k})\mi_k(n-\delta-1)$, and the statement follows.
\end{proof}

\begin{remark} Comparing this result with Theorem \ref{alap},  Proposition \ref{upper1} determined   the  right order of magnitude in the case $k=1$.
\end{remark}

\begin{corollary} $\mi_2(n)< \sqrt[3]{2}^n \ \ \mbox{where} \ \   \sqrt[3]{2}\approx 1,26$.
\end{corollary}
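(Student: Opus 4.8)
The plan is to read the bound off Proposition~\ref{upper1}. For $k=2$ that proposition asserts $\mi_2(n)=O(\alpha_2^{\,n})$ with $\alpha_2=\max_{d\in\mathbb{Z}^+}\sqrt[d+1]{\frac{d+2}{2}}$, so everything reduces to identifying the maximizing integer $d$ and evaluating the maximum.

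To locate the maximizer I would study the one--variable function $g(x):=\frac{1}{x+1}\ln\frac{x+2}{2}$ for $x\ge 1$, whose value at an integer $d$ is exactly $\ln\sqrt[d+1]{\frac{d+2}{2}}$. One computes $g'(x)=\frac{h(x)}{(x+1)^2}$ with $h(x):=\frac{x+1}{x+2}-\ln\frac{x+2}{2}$, and since $h'(x)=-\frac{x+1}{(x+2)^2}<0$ the function $h$ is strictly decreasing; as $h(2)=\frac{3}{4}-\ln 2>0$ while $h(3)=\frac{4}{5}-\ln\frac{5}{2}<0$, there is a unique $x_0\in(2,3)$ with $g$ increasing on $[1,x_0]$ and decreasing on $[x_0,\infty)$. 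Hence over the positive integers $g$ is maximized at $d=2$ or $d=3$, and the inequality $g(2)=\frac{\ln 2}{3}>\frac{\ln(5/2)}{4}=g(3)$, equivalent to $16>(5/2)^3$, selects $d=2$. Therefore $\alpha_2=\sqrt[3]{2}\approx 1.26$ and $\mi_2(n)=O(\sqrt[3]{2}^{\,n})$. I expect this (entirely elementary) optimization to be the only step with real content.

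It remains to remove the implied constant and obtain the clean inequality $\mi_2(n)<\sqrt[3]{2}^{\,n}$ stated in the corollary; for this I would rerun the induction of Proposition~\ref{upper1} while tracking strictness. The recursion $\mi_2(n)\le\left(1+\frac{\delta}{2}\right)\mi_2(n-\delta-1)$ combined with the inequality $1+\frac{\delta}{2}\le 2^{(\delta+1)/3}$, valid for all $\delta\ge 0$ with equality only at $\delta=2$, shows that the strict bound $\mi_2(m)<2^{m/3}$ propagates from smaller to larger orders; the base cases $\mi_2(1)=\mi_2(2)=\mi_2(3)=1$ are strictly below $2^{1/3},2^{2/3},2$, one uses that $\delta=0$ merely deletes an isolated vertex (Observation~\ref{obsit}), and the only situation in which the recursion would land on $\mi_2(0)=1=2^{0}$ is $\delta=n-1$, i.e.\ $G=K_n$, which carries no $2$-DIS for $n\ge 2$. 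The residual obstacle is thus purely verificational: checking $1+\delta/2\le 2^{(\delta+1)/3}$ for every positive integer $\delta$ (a linear quantity against an exponential one, tight at $\delta=2$) and that the handful of base cases are genuinely strict.
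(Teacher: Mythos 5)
Your proposal is correct and matches the paper's (implicit) argument: the corollary is stated as an immediate consequence of Proposition \ref{upper1} with $k=2$, and your calculus verification that the maximum of $\sqrt[d+1]{\frac{d+2}{2}}$ over positive integers $d$ is attained at $d=2$, giving $\alpha_2=\sqrt[3]{2}$, is exactly the omitted computation. Your extra bookkeeping to upgrade $O(\sqrt[3]{2}^{\,n})$ to the strict inequality $\mi_2(n)<\sqrt[3]{2}^{\,n}$ (checking $1+\delta/2\leq 2^{(\delta+1)/3}$ and the base cases) is sound and in fact more careful than the paper, which asserts strictness without comment.
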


In order to prove the upper bound of Theorem \ref{fo1}, we refine the above result. The main idea is to improve the bounds if the minimal degree is less then $4$.

\begin{theorem}\label{upperb2}
 $\mi_2(n)<\sqrt[5]{3}^n \ \ \mbox{where} \ \   \sqrt[5]{3}\approx 1,2457.$
\end{theorem}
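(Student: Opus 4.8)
The plan is to run the same degree-based induction as in Proposition~\ref{upper1}, but to carry out a finer case analysis whenever the minimal degree $\delta$ is small, since the crude recursion $\mi_2(n)\le (1+\tfrac{\delta}{2})\mi_2(n-\delta-1)$ is only good enough when $\delta\ge 4$. Indeed, for $\delta\ge 4$ one gets $\mi_2(n)\le \sqrt[5]{3}^{\,n}$ immediately because $(1+\tfrac{\delta}{2})^{1/(\delta+1)}$ is maximized over $\delta\ge 4$ at $\delta=4$, giving exactly $3^{1/5}$. So the entire content of the theorem is in handling $\delta\in\{1,2,3\}$ (the case $\delta=0$ of an isolated vertex just factors off by Observation~\ref{obsit}, and in fact adding a vertex joined to everything doesn't change $\mi_2$, so one may also assume no universal vertex).

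For the small-degree cases I would branch on a vertex $v$ of degree $\delta$ and split the $2$-DISes into those containing $v$ (hence avoiding all of $N(v)$) and those omitting $v$ (hence containing at least $2$ vertices of $N(v)$, and moreover every other neighbour of such a chosen vertex is forbidden). The first class is bounded by $\mi_2(n-\delta-1)$. For the second class, when $\delta=3$ say, a $2$-DIS omitting $v$ must pick $2$ or $3$ vertices of $N(v)=\{x,y,z\}$; each such choice kills $v$ plus the chosen vertices plus their neighbours, and one wants to show the total is at most $2\,\mi_2(n-4)$ (which combined with $2^{1/4}<3^{1/5}$ would finish $\delta=3$). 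The point is that if two of $x,y,z$ are chosen they cannot be adjacent, and one can use that $v$ together with those two chosen vertices and their remaining private neighbours removes enough vertices; a similar but more delicate bookkeeping handles $\delta=2$ and $\delta=1$, where one likely has to look two steps out (consider $N(v)$ and $N(N(v))$, i.e. analyze the structure around a shortest path or a low-degree neighbourhood) to get a recursion whose characteristic root is below $3^{1/5}\approx 1.2457$. Throughout, one verifies the base cases against the explicit small values $\mi_2(3)=1,\ \mi_2(4)=2,\ \mi_2(5)=2$ from the Claim, so that the induction has something to stand on.

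The main obstacle will be the $\delta\le 2$ cases: a single branch on a low-degree vertex gives a recursion that is too weak (e.g. $\delta=2$ naively yields $\mi_2(n)\le 2\,\mi_2(n-3)$, i.e.\ $\sqrt[3]{2}^{\,n}$, which is worse than the target), so one must combine the branching with structural observations — for instance that a degree-$1$ vertex forces its neighbour out of every $2$-DIS and forces another neighbour of that neighbour in, or that one can always find a degree-$2$ vertex whose two neighbours have controlled further neighbourhoods — in order to delete more than $\delta+1$ vertices per branch or to reduce the branching factor. I would expect the write-up to isolate a handful of local configurations, bound the number of $2$-DISes compatible with each in terms of $\mi_2(n-c)$ for suitable $c$, and check in each case that the resulting exponential base does not exceed $\sqrt[5]{3}$; the arithmetic is routine once the right configurations are pinned down, so the real work is choosing which vertex to branch on and how far to look around it.
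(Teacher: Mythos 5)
Your overall strategy is exactly the paper's: induct on $n$, branch at a minimum-degree vertex $v$, use the crude bound of Proposition~\ref{upper1} for $\delta\ge 4$ (where $\max_{d\ge 4}(1+\tfrac d2)^{1/(d+1)}$ is indeed attained at $d=4$ and equals $\sqrt[5]{3}$), and refine the analysis for $\delta\in\{1,2,3\}$. The cases $\delta=1$ (degree-one vertices are forced into the $2$-DIS, their neighbours out, giving $\mi_2(n)\le\mi_2(n-2)$) and $\delta=2$ (both neighbours of $v$ forced in when $v$ is out, yielding $\mi_2(n)\le\mi_2(n-3)+\mi_2(n-4)$ with characteristic root $\approx 1.2207$) go through essentially as you gesture. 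The genuine gap is in $\delta=3$, which is the critical case, and your proposed numerical target there does not close the induction. If you bound the $2$-DISes omitting $v$ by $2\,\mi_2(n-4)$ and add $\mi_2(n-4)$ for those containing $v$, the recursion is $\mi_2(n)\le 3\,\mi_2(n-4)$, whose root is $3^{1/4}\approx 1.316>\sqrt[5]{3}$; and if instead you mean the grand total is at most $2\,\mi_2(n-4)$ (your parenthetical $2^{1/4}<3^{1/5}$ suggests this), that claim is unjustified and is not what the true branching gives --- already the $v\in D$ branch alone costs $\mi_2(n-4)$, and there are three pairs in $N(v)$ to branch over on the other side.

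What the paper actually needs for $\delta=3$ is a dichotomy you have not identified. When $N(v)=\{w_1,w_2,w_3\}$ is independent and $|N(w_i)\cup N(w_j)|\ge 5$ for every pair, one shows that choosing the pair $\{w_i,w_j\}$ deletes at least $7$ vertices (namely $N[w_i]\cup N[w_j]$, using that each $w_i$ has degree $\ge 3$), and for the two pairs not containing, say, $w_2$ one may additionally delete the unchosen third neighbour, giving
\[
\mi_2(n)\le \mi_2(n-4)+\mi_2(n-7)+2\,\mi_2(n-8),
\]
whose root is $\approx 1.241<\sqrt[5]{3}$. When some pair has $|N(w_i)\cup N(w_j)|\le 4$, both $w_i$ and $w_j$ are forced into every $2$-DIS avoiding $v$, which collapses the second branch to a single term $\mi_2(n-5)$; and when $N(v)$ spans an edge, one pair disappears and the recursion becomes $\mi_2(n-4)+2\,\mi_2(n-7)$. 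Without isolating these configurations and getting deletions of $7$ or $8$ (not $4$ or $5$) vertices per pair-branch, the arithmetic for $\delta=3$ does not come in under $\sqrt[5]{3}$, so as written your proposal does not yet constitute a proof.
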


\begin{proof}

Define $\tau:=\sqrt[5]{3}$. We prove by induction. Note that $\mi_2(0)\leq \tau^0$ and $\mi_2(1)\leq \tau^1$ trivially holds and assume that $\mi_2(i)\leq \tau^i$ holds for $i=0,\ldots , n-1$.  
Notice that the deletion of possible isolated vertices does not affect the number of $2$-DISes.

Assume first that $\delta=1$ in $G$. Consequently,  $\mi_2(n, G)\leq \mi_2(n-2)\leq \tau^{n-2}\leq \tau^{n}$ as vertices of degree $1$ must be in the $2$-dominating set in contrast with their neighbors.

Next, suppose that $d(v)=\delta=2$. This implies 
\begin{equation}\label{ketto}
\mi_2(n, G)\leq \mi_2(n-3)+\mi_2(n-4)\end{equation}
since the $2$-DISes are either formed by $v$ and a $2$-DIS in $G\setminus N[v]$ or formed by $w_1, w_2 \in N(v)$ and a $2$-DIS in $G\setminus ~( N[w_1]\cup~N[w_2])$. Let $\tau_1$ be the unique positive  root of $P(x)=x^4-x-1$.  ($\tau_1\approx 1,22$.) Then  inequality \eqref{ketto} implies that $\mi_2(n, G)\leq \mi_2(n-3)+\mi_2(n-4)\leq \tau^{n-3}+\tau^{n-4}<\tau^n$ as $\tau_1< \tau$.

Let us suppose $d(v)=\delta=3$. If $|N(w_i)\cup~N(w_j)]|\geq 5$ for all pairs of vertices $w_i\neq w_j\in N(v)$ and $N(v)$ is an independent set, then  
\begin{equation}\label{harom}
\mi_2(n, G)\leq \mi_2(n-4)+\mi_2(n-7) + 2\mi_2(n-8).\end{equation}
Indeed, the $2$-DISes are either formed by $v$ and a $2$-DIS in $G\setminus N[v]$, or formed by $w_1, w_2 \in N(v)$ and a $2$-DIS in $G\setminus ~( N[w_1]\cup~N[w_2])$, or formed by $w_1, w_3 \in N(v)$ and a $2$-DIS in $G\setminus ~( N[w_1]\cup~N[w_3]\cup \{w_2\})$, or formed by $w_2, w_3 \in N(v)$ and a $2$-DIS in $G\setminus ~( N[w_2]\cup~N[w_3]\cup \{w_1\})$. Let $\tau_2$ be the unique positive  root of $P(x)=x^8-x^4-x-2$.  ($\tau_2\approx 1,241$.) Then  inequality \eqref{harom} implies that $\mi_2(n, G)\leq~ \mi_2(n-4)+~\mi_2(n-~7)+~2\mi_2(n-~8)\leq \tau^{n-4}+\tau^{n-7}+2\tau^{n-8}<\tau^n$ as $\tau_2< \tau$.\\
What if $|N(w_i)\cup~N(w_j)|\geq 5$ does not hold for some $w_i\neq w_j\in N(v)$? Then every $2$-DIS which does not contain $v$ must contain both $w_i$ and $w_j$. Indeed, one of them must be in the set  $D$ to dominate $v$, but then the other one cannot be $2$-dominated, thus it must be in the $D$ as well. Hence we could bound the number of $2$-DISes by $\mi_2(n-4)+~\mi_2(n-~5)$, and the inequality $\mi_2(n, G)<\tau^n$ follows easily. \\
Finally, we have to handle the case when $|N(w_i)\cup~N(w_j)|\geq 5$ holds for every $w_i, w_j\in N(v)$ but $N(v)$ induces at least one edge. W.l.o.g, $w_1w_2\in E(G)$ and then we miss the $2$-DISes where $w_1$ and $w_2$ were both part of $D$ in inequality \eqref{harom}, which yields 
\begin{equation}\label{negy}
\mi_2(n, G)\leq \mi_2(n-4)+2\mi_2(n-7)\end{equation}
to hold in this case. Observing that the unique positive root $\tau_3$ of $x^7-x^3-2$ is less then $\tau$, we conclude to
$\mi_2(n, G)<\tau^n$ again.

At last, applying the proof of Proposition \ref{upper1} to $\delta\geq 4$, we get $$\mi_2(n)\leq \left(\frac{2+\delta}{2}\right)\mi_2(n-\delta-1).$$ 

The fact $$\max_{d\in\mathbb{Z}, d\geq 4} \left\{ \sqrt[d+1]{\frac{2+d}{2}}\right\}= \sqrt[5]{\frac{6}{2}}=\tau $$
thus completes the proof.
\end{proof}

\begin{proof}[Proof of Theorem \ref{fo2}, upper bound]
Finally, to obtain the upper bound in Theorem \ref{fo2}, we only have to observe two facts. On the one hand, we  can assume that $\delta\geq k$ holds for the minimal degree of $G$, similarly to the proof of Theorem \ref{upperb2}. Indeed, otherwise we would get $\mi_k(n, G)\leq \mi_k(n-\delta)$. On the other hand, easy computation shows that $\sqrt[d+1]{\frac{k+d}{k}}$ is a  monotone decreasing function of $d$ from $d=k$, if $k$ is fixed. Thus  $\mi_k(n, G)\leq 2\cdot \mi_k(n-k-1)$, and the upper bound follows.
\end{proof}

\section{Concluding remarks and open problems}

In this final chapter we gather some problems and conjectures related to the discussed results.

\begin{problem} \label{mdscode}
Determine or bound the number of all MDS codes, especially the number of $q$-ary MDS codes of type $(n,M,2)_q$

\end{problem} 

\begin{remark}
The result is related to the number of $q-1$-coloring of certain Hamming-graphs in view of the proof of Theorem \ref{mds}. Note that this problem is widely open even if we consider linear MDS codes, and on the other hand $q$ is not required to be a prime power.
\end{remark}

\begin{problem} \label{vegessik}
Determine the number of maximal independent sets of the incidence graph $G(\mathcal{P, L})$ of the projective geometry $PG(2,q)$ in terms of the number of vertices.
\end{problem} 

\begin{conjecture} For every $k$, there exists a graph $G$ for which $\zeta_k(G)=\lim \zeta_k(n)$ holds.
\end{conjecture}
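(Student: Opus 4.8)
\medskip
\noindent\textit{Towards a proof.} The conjecture asks whether a supremum is attained, so the plan is first to recast it in those terms and then to analyse extremal graphs. By Observation~\ref{obsit}, taking disjoint unions of extremal graphs shows that $\mi_k(n)$ is supermultiplicative, $\mi_k(m+n)\ge\mi_k(m)\mi_k(n)$, and $\mi_k(n)\in[1,2^n]$; hence Fekete's lemma gives $\lim\zeta_k(n)=\sup_n\zeta_k(n)=\sup_G\zeta_k(G)$, the last supremum being over all finite graphs with a $k$-DIS. Moreover, if $G$ has connected components $G_1,\dots,G_r$, then every $k$-DIS of $G$ restricts to a $k$-DIS of each $G_i$ and conversely, so $\mi_k(G)=\prod_i\mi_k(G_i)$ and $\zeta_k(G)=\prod_i\zeta_k(G_i)^{v(G_i)/v(G)}$ is a weighted geometric mean of the $\zeta_k(G_i)$, whence $\zeta_k(G)\le\max_i\zeta_k(G_i)$. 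Thus the conjecture is equivalent to the assertion that $\sup\{\zeta_k(H):H\text{ connected, }H\text{ admits a }k\text{-DIS}\}$ is attained. This is exactly the situation realised for $k=1$ by the Moon--Moser theorem (Theorem~\ref{alap}), with witness $H=K_3$ and $\zeta_1(K_3)=\sqrt[3]{3}=\lim\zeta_1(n)$; the guiding hope is that for every $k$ some extremal $n$-vertex graph is, up to $O(1)$ vertices, a disjoint union of copies of one small connected graph, which then serves as the required $G$.

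To realise this I would try to bound the order of an extremal connected graph by a constant $N_0=N_0(k)$, via a \emph{compression lemma}: for every connected $H$ with a $k$-DIS and $v(H)>N_0(k)$ there is a connected $H'$ with $v(H')<v(H)$ and $\zeta_k(H')\ge\zeta_k(H)$. Granting this, iterating the compression drives the order down to at most $N_0(k)$; since only finitely many graphs have order $\le N_0(k)$ and $\zeta_k$ never decreases along the iteration, the maximum of $\zeta_k$ over these finitely many graphs equals $\sup_{H\text{ connected}}\zeta_k(H)=\lim\zeta_k(n)$ and is attained. To prove the compression lemma I would take a vertex $v$ of minimum degree $\delta$ in $H$ and use the dichotomy behind Proposition~\ref{upper1} — a $k$-DIS either contains $v$ and avoids $N(v)$, or meets $N(v)$ in at least $k$ vertices — which yields $\mi_k(H)\le(1+\delta/k)\mi_k(H\setminus N[v])$; if $\zeta_k(H)$ is close to the supremum then this estimate must be almost tight, which should force rigid local structure around $v$ ($N(v)$ independent, the closed neighbourhoods of the vertices of $N(v)$ nearly disjoint, and $\delta$ near the extremal value in the recursion), and one would then split $H$ along such a rigid region into connected pieces on which $\zeta_k$ is no smaller, contradicting that $H$ was large.

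The main obstacle is precisely this compression/splitting step: there is no a~priori bound on the order of an extremal connected graph, and turning ``near-tightness forces rigidity'' into a usable dichotomy seems to require a \emph{stability} refinement of the Section~5 upper-bound arguments --- a $k$-DIS analogue of Erd\H{o}s--Ko--Rado-type stability --- which is not currently available. The difficulty is entangled with a harder open question: for $k=2$ the value $\lim\zeta_2(n)$ is known only to lie in $[\sqrt[9]{6},\sqrt[5]{3}]$ (Theorem~\ref{fo1}), and a clean proof of the conjecture there would almost certainly also pin down the constant and show that the graphs of Construction~\ref{pelda1} are extremal. A realistic first target is therefore conditional: \emph{if} extremal $n$-vertex graphs for $\mi_k$ are, up to $O(1)$ vertices, disjoint unions of copies of a single connected graph $H_k$ --- as conjectured after Construction~\ref{pelda1} for $k=2$, and for the Tur\'an graph $T_{3\cdot 3,3}$ when $k=3$ --- then $\zeta_k(H_k)=\lim\zeta_k(n)$, and the conjecture follows; an unconditional proof seems to need a genuinely new stability or removal argument for counting $k$-dominating independent sets.
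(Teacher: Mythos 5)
The statement you are trying to prove is stated in the paper as a \emph{conjecture}; the paper offers no proof of it, and your write-up, to its credit, does not claim to supply one either. What you actually establish is a correct and useful reduction: by Observation~\ref{obsit} the sequence $\mi_k(n)$ is supermultiplicative, so Fekete's lemma (equivalently, parts (ii)--(iii) of Theorem~\ref{alap0}) gives $\lim\zeta_k(n)=\sup_n\zeta_k(n)=\sup_G\zeta_k(G)$, and since $\zeta_k(G)$ is a weighted geometric mean of the $\zeta_k(G_i)$ over components, the conjecture is equivalent to the attainment of $\sup\{\zeta_k(H):H\ \text{connected},\ \mi_k(H)\ge 1\}$. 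All of this is sound.

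The genuine gap is exactly where you place it: the ``compression lemma'' asserting that every sufficiently large connected $H$ can be replaced by a strictly smaller connected $H'$ with $\zeta_k(H')\ge\zeta_k(H)$. Nothing in the paper supplies this, and the dichotomy behind Proposition~\ref{upper1} cannot by itself be upgraded to it: the recursion $\mi_k(H)\le(1+\delta/k)\,\mi_k(H\setminus N[v])$ bounds $\mi_k(H)$ in terms of a \emph{smaller, generally disconnected} graph whose $\zeta_k$ need not dominate $\zeta_k(H)$, and ``near-tightness forces rigidity'' is precisely the stability statement that is missing --- note that for $k=2$ even the value of $\lim\zeta_2(n)$ is only known to lie in $[\sqrt[9]{6},\sqrt[5]{3}]$ (Theorem~\ref{fo1}), so no extremal structure theorem is currently within reach. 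Be aware also that supermultiplicativity and Fekete give $\lim=\sup$ but say nothing about whether the supremum is attained by any finite graph; a priori there could be an infinite sequence of connected graphs with $\zeta_k$ strictly increasing to the limit. Your conditional formulation (assuming the extremality of Construction~\ref{pelda1} and its analogues) is the honest state of the art; as an unconditional proof the proposal is incomplete.
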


\begin{conjecture}($\sqrt[9]{6}$-conjecture) The maximal number of $2$-DISes in $n$-vertex graphs is $\Theta(\sqrt[9]{6}^n)$. That is, $\zeta_2(n)=\zeta_2(K_3\square K_3)$. Moreover, Construction \ref{pelda1} provides the extremal graphs for the function $\mi_2(n)$ if $n$ is large enough.
\end{conjecture}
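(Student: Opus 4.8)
The lower bound $\sqrt[9]{6}\le \lim\zeta_2(n)$ is already in hand from Proposition \ref{order}, so the entire content of the conjecture is the matching upper bound $\mi_2(n)\le c'\cdot 6^{n/9}$ together with the extremal characterisation. The plan is to rerun the inductive deletion scheme of Theorem \ref{upperb2}, but now with target base $\tau:=\sqrt[9]{6}$ and a far finer local analysis. In that proof the only obstruction to a smaller constant was the regime $\delta\ge 4$, where the crude bound $\mi_2(n)\le (\tfrac{2+\delta}{2})\mi_2(n-\delta-1)$ yields exactly $\sqrt[5]{3}=3^{1/5}$ at $\delta=4$. Since the conjectured extremal graph $K_3\square K_3$ is $4$-regular, the heart of the matter is precisely this $\delta=4$ case: one must show that an isolated application of the one-vertex recursion is never tight and replace it by an amortised step that removes a whole $9$-vertex cell while multiplying the count by at most $6$.

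First I would fix a vertex $v$ of minimum degree $\delta=4$ and, assuming none of the sharper recursions of Theorem \ref{upperb2} already beats $\tau$ (so that $N(v)$ is independent, all pairwise neighbourhood unions are large, and so on), examine the second neighbourhood of $v$. The goal of this step is a \emph{local rigidity lemma}: whenever the simple deletions do not suffice, the subgraph induced on $v$, its four neighbours and their relevant common second neighbours is forced to be (a subgraph of) a $K_3\square K_3$ block, whose four outer vertices attach to the rest of $G$ in a controlled way. Granting this, I would delete the entire $9$-vertex block at once and show that it contributes a multiplicative factor of at most $6=\mi_2(K_3\square K_3)$, so that $\mi_2(n)\le 6\,\mi_2(n-9)\le 6\,\tau^{n-9}=\tau^{n}$ because $\tau^9=6$, which closes the induction. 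The finitely many residual configurations (small $\delta$, boundary cells, and the $K_4\square K_4$ corrections permitted by $\beta\le 8$ in Construction \ref{pelda1}) would be absorbed into the constant $c'$, exactly as the degree $1,2,3$ cases are absorbed in Theorem \ref{upperb2}.

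For the extremal characterisation I would run a Moon--Moser style equality analysis backwards through this induction. Equality $\mi_2(n)=c'\cdot 6^{n/9}$ forces every deletion step to be simultaneously tight; by the rigidity lemma each tight step must strip off a block isomorphic to $K_3\square K_3$ that is in fact a connected component of $G$, since any edge leaving the block would strictly decrease the number of $2$-DISes. The residue of $n$ modulo $9$ is then handled by the unique admissible number of $K_4\square K_4$ components, and Observation \ref{obsit} reassembles the components multiplicatively. This would pin down the extremal graphs to be exactly those of Construction \ref{pelda1} for all large $n$.

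The main obstacle is the local rigidity lemma and its accompanying discharging. Unlike the Moon--Moser setting, where the extremal cell $K_3$ has three vertices and the local case analysis is trivial, here the extremal cell has nine vertices and is $4$-regular, so one must look two (and in the tightest cases three) neighbourhoods deep and rule out a large number of competing $4$-regular local configurations, each of which must be shown to yield a strictly sub-$\tau$ recursion. Controlling how adjacent cells interact along their shared boundary, so that the amortised ``$6$ per $9$ vertices'' never degrades, is the delicate point; it is plausible that a finite but substantial computer-assisted case check is needed to complete it, which is presumably why the statement remains only a conjecture.
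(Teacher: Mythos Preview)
The statement you are attempting is a \emph{conjecture} in the paper, listed among the open problems in Section~6; the paper offers no proof, so there is nothing to compare your proposal against on that front.

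As a plan of attack, your outline is coherent and correctly identifies that the $\delta\ge 4$ regime is the sole obstruction in the proof of Theorem~\ref{upperb2}. However, the proposal is not a proof: the ``local rigidity lemma'' you invoke is precisely the hard, unproven content of the conjecture, and you yourself flag it as the main obstacle. Saying that once this lemma is in hand the induction closes with $\mi_2(n)\le 6\,\mi_2(n-9)$ is a restatement of what needs to be shown, not a demonstration of it. There is no indication in the paper (or in your proposal) of how to actually carry out the two- and three-neighbourhood case analysis that would force a $K_3\square K_3$ cell, nor any bound on how many competing $4$-regular local configurations must be eliminated.

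There is also a technical gap in the block-deletion step itself. Even if you succeed in locating a $9$-vertex induced $K_3\square K_3$, the inequality $\mi_2(G)\le 6\cdot\mi_2(G\setminus B)$ does \emph{not} follow automatically when $B$ has edges to $V(G)\setminus B$: the restriction of a $2$-DIS of $G$ to $G\setminus B$ need not be a $2$-DIS there, since vertices adjacent to $B$ may lose domination. You use ``any edge leaving the block strictly decreases the count'' only in the equality analysis, but some version of this control is already needed for the upper bound, and you have not supplied it. In short, the proposal is a plausible roadmap but leaves the two central difficulties --- the rigidity lemma and the interaction of the deleted block with its boundary --- entirely open, which is consistent with the problem's status as a conjecture.
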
 

\begin{conjecture} The maximal number of $k$-DISes in $n$-vertex graphs is attained for the disjoint union of $K_{k,k}$ graphs for $k>3$ if $2k|n$.
\end{conjecture}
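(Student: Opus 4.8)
The content of the conjecture is entirely the matching upper bound, since the lower bound $\mi_k(n)\ge 2^{n/(2k)}$ is already delivered by Observation~\ref{obsit} applied to $n/(2k)$ disjoint copies of $K_{k,k}$, each contributing the factor $\mi_k(K_{k,k})=2$ recorded in Proposition~\ref{constr}. So the plan is to prove $\mi_k(n)\le 2^{n/(2k)}$ for every $k>3$. Writing $\tau:=2^{1/(2k)}$, I would run a strong induction on $n$, and use multiplicativity (Observation~\ref{obsit}) to reduce the extraction of extremal structure to connected graphs; the identification of $K_{k,k}$-unions as the unique maximisers should then fall out by tracking equality cases throughout the induction. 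The branching itself is in the spirit of Proposition~\ref{upper1} and Theorem~\ref{upperb2}: pick a vertex $v$ of minimal degree $\delta$ and split the $k$-DISes according to the status of $v$.

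First I would dispose of $\delta<k$: such a $v$ can never be $k$-dominated, so it lies in every $k$-DIS while $N(v)$ lies outside, and one checks $\mi_k(G)\le \mi_k(n-1)\le\tau^{\,n-1}$. The genuinely hard regime is $\delta\ge k$. Here the crude estimate of Proposition~\ref{upper1} only yields the rate $(1+\delta/k)^{1/(\delta+1)}$, and an elementary computation shows this drops below $\tau=2^{1/(2k)}$ only once $\delta\gtrsim 5k$ (the limiting inequality is $1+x\le 2^{x/2}$ with $x=\delta/k$); across the entire window $k\le\delta\lesssim 5k$ the single-vertex branching is too weak. This is precisely the phenomenon that already forced the refined case analysis at $k=2$ in Theorem~\ref{upperb2}, so the bulk of the work is a structural refinement covering this window.

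The refinement I have in mind is dictated by the geometry of the extremal example. When $\delta=k$, a minimal vertex $v$ with $N(v)=\{w_1,\dots,w_k\}$ admits exactly two local completions: $v\in D$, forcing $N(v)\cap D=\emptyset$, or $v\notin D$, forcing $N(v)\subseteq D$ because $v$ has exactly $k$ neighbours. In $K_{k,k}$ the remaining $k-1$ vertices on $v$'s side have all their neighbours inside $N(v)$, so deleting $N[v]$ leaves them isolated, contributing the factor $1$ instead of $\tau^{\,k-1}$; this is the slack that converts the useless bound $2\,\mi_k(n-k-1)$ into the exact recursion $\mi_k(G)\le 2\,\mi_k(n-2k)\le 2\,\tau^{\,n-2k}=\tau^{\,n}$. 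The plan is therefore to prove a second-neighbourhood lemma: in the regime $\delta\ge k$ one can always locate a set $S\supseteq N[v]$ of at least $2k$ vertices such that both branches restrict to $k$-DISes of $G-S$, so that each factor-$2$ branching "consumes" at least $2k$ vertices, with equality propagating only when the local picture is a $K_{k,k}$ block. The hypothesis $k>3$ should enter exactly here, to rule out the competing dense blocks $K_3\square K_3$, $K_4\square K_4$ and the Tur\'an graphs $T_{k^2,k}$, which out-perform $K_{k,k}$ precisely for $k\le 3$ (compare Proposition~\ref{constr} and the Corollary following Theorem~\ref{mds}); the quantitative threshold is $2^{k/2}\ge k$.

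The main obstacle I anticipate is making this second-neighbourhood lemma uniform in both $k$ and $\delta$ over the whole window $k\le\delta\lesssim 5k$. A finite, $k$-independent case analysis of the sort used for $k=2$ does not obviously scale, so the decisive step is likely a global argument rather than local bookkeeping --- for instance a weighting or discharging scheme assigning each vertex a budget that sums to $n/(2k)$ and proving that any factor-$2$ branching spends at least $2k$ units of budget, or an argument that bounds $\log_2\mi_k(G)$ by $n/(2k)$ directly through the complementary $k$-dominating structure carried by $V\setminus D$. Pinning down that the extremal \emph{connected} graph is forced to be exactly $K_{k,k}$, and not merely to have the right exponential growth, is the other delicate point, and I expect it to require a careful equality analysis threaded through every branch of the induction.
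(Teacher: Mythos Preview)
This statement is presented in the paper as an open \emph{conjecture} (Section~6); the paper supplies no proof, only the lower bound from disjoint copies of $K_{k,k}$ and the non-matching upper bound $\sqrt[k+1]{2}$ of Theorem~\ref{fo2}. There is therefore no proof in the paper to compare against, and a correct argument here would settle an open problem.

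Your plan identifies the right bottleneck but does not close it. The crux is your ``second-neighbourhood lemma'': that around a minimum-degree vertex $v$ one can always locate a set $S\supseteq N[v]$ with $|S|\ge 2k$ so that every $k$-DIS restricts to a $k$-DIS of $G-S$ and the restriction map is at most $2$-to-$1$. In the branch $v\in D$ with $\delta=k$, the only vertices whose membership is forced are those of $N[v]$, so the natural $S$ has size $k+1$; the additional $k-1$ vertices you need are determined only in the extremal configuration $K_{k,k}$ (where they become isolated in $G-N[v]$), not in a general graph with $\delta=k$. For $\delta>k$ the branch $v\notin D$ no longer pins down $D\cap N(v)$ to a single set, so even the ``two branches'' premise breaks. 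You yourself flag the lemma as the ``main obstacle'' and offer only programmatic suggestions (discharging, weighting, a global counting identity) without carrying any of them out. As written, the proposal is a reasonable heuristic road map consistent with the paper's own speculation, but it contains no argument that narrows the gap between $\sqrt[2k]{2}$ and $\sqrt[k+1]{2}$; the conjecture remains open.
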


\begin{problem} Describe large graph families $\mathcal{F}$ for which 
\begin{itemize}
\item $\mi_k(n, \mathcal{F})\leq 1$,
\item $\mi_k(n, \mathcal{F})$ is bounded by a  polynomial of $n$,
\item $\lim\zeta_k(n, \mathcal{F})=1$.
\end{itemize}
\end{problem} 
This problem is motivated by the results of Farber, Hujter and Tuza \cite{HT2}.

\begin{conjecture} $\mi_k(n, \mathcal{F})$ is not bounded by a polynomial of $n$ for the graph family of incidence graphs of projective planes.
\end{conjecture}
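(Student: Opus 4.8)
The plan is to exhibit, for a suitable infinite sequence of projective planes, super-polynomially many $k$-DISes in a single incidence graph, so that $\mi_k(n,\mathcal{F})$ outgrows every polynomial in $n$. The engine is Construction \ref{k-arc}, together with the elementary observation that a $t$-dominating independent set is automatically a $k$-dominating independent set whenever $k\le t$ (having more than the required number of neighbours in $D$ only helps). Thus, for a fixed target $k$, it suffices to produce super-polynomially many maximal $\{K;t\}$-arcs of one fixed degree $t$ satisfying $k\le t\le\sqrt q$: each such arc $\mathcal{K}$ yields, by the Claim following Construction \ref{k-arc}, a $t$-DIS $D_{\mathcal{K}}=\mathcal{K}\cup\{\ell\in\mathcal{L}:\ell\cap\mathcal{K}=\emptyset\}$, and since $D_{\mathcal{K}}\cap\mathcal{P}=\mathcal{K}$, distinct arcs give distinct $k$-DISes. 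The domination conditions $(1)$ and $(2)$ are already guaranteed by that Claim, so the entire problem is reduced to \emph{counting arcs}.

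The source of super-polynomially many arcs is Denniston's construction \cite{Denniston} in $PG(2,2^h)$. Fixing an irreducible pencil of conics $\{C_\lambda\}_{\lambda\in\mathbb{F}_{2^h}}$, the Denniston maximal arc of degree $2^s$ attached to a parameter set $A$ is $\mathcal{K}_A=\bigcup_{\lambda\in A}C_\lambda$, and this is a maximal $\{K;2^s\}$-arc precisely when $A$ is an additive subgroup of $(\mathbb{F}_{2^h},+)$ of order $2^s$. Because the characteristic is $2$, these additive subgroups are exactly the $s$-dimensional $\mathbb{F}_2$-subspaces of $\mathbb{F}_{2^h}$, of which there are $\binom{h}{s}_2=\prod_{i=0}^{s-1}\frac{2^{h}-2^{i}}{2^{s}-2^{i}}$ many. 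The arc $\mathcal{K}_A$ recovers $A$ (it is the union of exactly those pencil conics whose parameter lies in $A$), so $A\mapsto\mathcal{K}_A$ is injective and the $\binom{h}{s}_2$ arcs are pairwise distinct as point sets. Choosing $s=\lfloor h/2\rfloor$ gives degree $t=2^{\lfloor h/2\rfloor}\le 2^{h/2}=\sqrt q$, so Construction \ref{k-arc} applies and every $\mathcal{K}_A$ is a genuine $t$-DIS; moreover $t\ge k$ as soon as $q\ge k^2$, so for every fixed $k$ and all large $q$ these are honest $k$-DISes.

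It remains to read off the growth rate. Using $\binom{h}{s}_2\ge 2^{s(h-s)}$ with $s=\lfloor h/2\rfloor$ gives $\binom{h}{s}_2\ge 2^{\Theta(h^2)}=q^{\Theta(\log q)}$. Since the plane has $n=2(q^2+q+1)$ vertices, we have $q=\Theta(\sqrt n)$ and $\log q=\Theta(\log n)$, so the number of $k$-DISes in $G(\mathcal{P},\mathcal{L})$ is at least $n^{\Theta(\log n)}$, which exceeds every polynomial in $n$. Restricting $\mathcal{F}$ to the planes $PG(2,2^h)$ with $h$ even already supplies such a sequence, proving that $\mi_k(n,\mathcal{F})$ is not polynomially bounded. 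The main obstacle, and the only step beyond bookkeeping, is the precise enumeration of Denniston arcs: one must verify that Denniston's recipe produces a maximal arc for \emph{every} $\mathbb{F}_2$-subspace $A$ of the prescribed dimension (not merely for subfields) and that $A\mapsto\mathcal{K}_A$ is injective on point sets, so that the Gaussian-binomial count is a genuine lower bound on the number of distinct $k$-DISes.
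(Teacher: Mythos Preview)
The paper lists this statement as an open \emph{conjecture} in Section~6 and offers no proof, so there is nothing in the paper to compare against. Your proposal is not a comparison target but an actual proof of the conjecture, and it is essentially correct.

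The one step you flag as the genuine obstacle---that Denniston's recipe yields a maximal $\{K;2^s\}$-arc for \emph{every} additive subgroup $A\le(\mathbb{F}_{2^h},+)$, not merely for subfields---is indeed true and is how the construction is stated in the standard references (Denniston's original note, and e.g.\ the introduction of Ball--Blokhuis--Mazzocca). For completeness, here is why the subgroup hypothesis alone suffices. Restricting the anisotropic form $Q$ to an affine line $\ell=\{P_0+tV:t\in\mathbb{F}_q\}$ gives $f(t)=at^2+bt+c$ with $a=Q(V)\ne 0$. One checks that $b=B(P_0,V)=0$ precisely when $\ell$ passes through the nucleus $(0{:}0{:}1)$; in that case $t\mapsto at^2+c$ is a bijection and $|\ell\cap\mathcal{K}_A|=|A|$. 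If $b\ne 0$ then $g(t)=at^2+bt$ is $\mathbb{F}_2$-linear with image the hyperplane $\{y:\mathrm{Tr}(ay/b^2)=0\}$, and anisotropy of $Q$ (so $f$ has no root on a line missing the nucleus) forces $\mathrm{Tr}(ac/b^2)=1$. This single relation excludes the only bad case in the coset analysis of $g^{-1}(A+c)$ and yields $|\ell\cap\mathcal{K}_A|\in\{0,|A|\}$ for every additive subgroup $A$. Injectivity of $A\mapsto\mathcal{K}_A$ is immediate since the fibres $Q^{-1}(\lambda)$ partition the affine plane. With this in hand, your Gaussian-binomial count $\binom{h}{\lfloor h/2\rfloor}_2\ge 2^{\Theta(h^2)}=n^{\Theta(\log n)}$ distinct arcs, together with Construction~\ref{k-arc} and the trivial monotonicity ``$t$-DIS $\Rightarrow$ $k$-DIS for $k\le t$'', gives a correct proof that $\mi_k(n,\mathcal{F})$ is not polynomially bounded.
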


\bigskip
\noindent
{\bf Acknowledgments}

I would like to thank Zolt\'an F\"uredi, Tam\'as H\'eger, Mikl\'os Simonovits, Tam\'as Sz\H{o}nyi and Zsolt Tuza for helpful discussions on the topics of this paper.
\bigskip




\end{document}